\def\NAT@def@citea{\def\@citea{\NAT@separator}}
\theoremstyle{plain}
\newtheorem{theorem}{Theorem}[section]
\newtheorem{lemma}[theorem]{Lemma}
\newtheorem{corollary}[theorem]{Corollary}
\theoremstyle{definition}
\theoremstyle{remark}
\newtheorem{remark}{Remark}
\begin{document}
\title{\vspace{-0.5 cm}Solutions to the Seiberg--Witten equations in all dimensions}
\author{
\name{Partha Ghosh}
}
\newcommand{\Addresses}{{
  \bigskip
  \footnotesize

  \textsc{Institut de Mathématiques de Jussieu-Paris Rive Gauche, Sorbonne Université, Paris, France}\par\nopagebreak
  \textit{E-mail address}: \texttt{Partha.Ghosh@imj-prg.fr}
}}
\maketitle
\begin{abstract}
This article explores solutions to a generalised form of the Seiberg--Witten equations in higher dimensions, first introduced by Fine and the author \cite{JP}. Starting with an oriented $n$ dimensional Riemannian manifold with a spin$^\mathbb{C}$-structure, we described an elliptic system of equations that recovers the traditional Seiberg-Witten equations in dimensions $3$ and $4$. The paper focuses on constructing explicit solutions of these equations in dimensions $5, 6$ and $8$, where harmonic perturbation terms are sometimes required to ensure solutions. In dimensions $6$ and $8$ we construct solutions on Kähler manifolds and relate these solutions to vortices. In dimension $5$, we construct solutions on the product of a closed Riemann surface and $\mathbb{R}^3$. The solutions are invariant in the $\mathbb{R}^3$ directions and can be related to vortices on the Riemann surface. A key issue in higher dimensions is the potential noncompactness of the space of solutions, in contrast to the compact moduli spaces in lower dimensions. In our solutions, this noncompactness is linked to the presence of certain odd-dimensional harmonic forms, with an explicit example provided in dimension $6$. 
\end{abstract}
\tableofcontents

\section{Introduction}\label{introdution}
\subsection{Background} A generalisation of the Seiberg--Witten equations in all dimensions were introduced by Fine and myself \cite{JP}. Starting with an $n$-dimensional oriented Riemannian manifold with a spin$^\mathbb{C}$-structure, we described an elliptic system of equations which recover the Seiberg--Witten equations when $n=3,4$. \par
The original Seiberg--Witten equations introduced in \cite{SW},
led quickly to a revolution in $3$- and $4$-dimensional differential geometry and they remain at
the forefront of research today. Shortly after their appearance, Witten showed how one could count solutions to the equations, defining an invariant of the underlying smooth $4$-manifold \cite{EW}. In higher dimensions, there is no need for a gauge theoretic approach to study smooth structures since, for example, the $h$-cobordism theorem holds \cite{Smale}. Instead, one might speculate that higher dimensional Seiberg--Witten equations could prove useful when studying manifolds with geometric structures. The fantasy is inspired by Taubes' \textit{tour de force} \cite{Taubes2,Taubes3,Taubes4} connecting Seiberg--Witten and Gromov--Witten invariants of symplectic four-manifolds. Taubes also proves that for a compact symplectic 4-manifold with $b_+>1$ the Seiberg--Witten invariant for the canonical spin$^\mathbb{C}$-structure is always $1$ \cite{Taubes1}. This gives an obstruction to the existence of symplectic structures. There is no known obstruction in higher dimensions, beyond the most obvious ones that there must be a degree $2$ cohomology class with non-zero top power and the manifold must admit an almost complex structure. One can hope that the higher dimensional Seiberg--Witten equations could tell us something about higher dimensional symplectic manifolds. We make more comments on it in \S\ref{Tabues' limit}.\par
This article is dedicated to constructing several solutions of these new higher-dimensional Seiberg--Witten equations. We construct solutions of the Seiberg--Witten equations in dimensions $5,6$ and $8.$ Considering our interest in the potential applications of these equations in symplectic geometry, we examined them on Kähler manifolds and discovered solutions. In some cases, we needed to add a suitable harmonic ``perturbation" term to the curvature equation for the construction of a solution. \par
One of the key features in both the $3$- and $4$-dimensional case is that the moduli space of the Seiberg--Witten equations over a closed manifold is compact. This compactness plays a crucial role in constructing the Seiberg–-Witten invariant. For $n>4$ although some initial bounds are established in \cite{JP}, these results alone are insufficient to guarantee that the solution space is compact. Instead, they leave open the possibility of ``bubbling" occurring. For our ansatz, we discovered that while the solution space can indeed be non-compact, this happens in a very specific way, depending on the existence of certain odd-dimensional harmonic forms. We provide a concrete example of this phenomenon in dimension $6$ in \S\ref{Compactness}. Nonetheless, one may hope that, modulo these exceptions, the solution space remains compact. 
\subsection{Overview of the main results}
We begin by fixing notation. Let $(M,g)$ be a Riemannian manifold of dimension $n$, which admits a spin$^\mathbb{C}$-structure.  Write $S \xrightarrow{} M^n$ for the spin bundle of the spin$^\mathbb{C}$-structure. When $n$ is even, $S =S_+ \oplus S_-$ splits into sub-bundles of positive and negative spinors. We write $c: \Lambda^* \xrightarrow{} \text{End}(S)$ for the Clifford action of differential forms on spinors. We follow the conventions of \cite{Friedrich}. In particular, (real) 1-forms act as skew-Hermitian endomorphisms. Meanwhile, in dimension $n=2m$, the volume form satisfies $i^mc(\text{dvol})) = \pm 1$ on $S_{\pm}$ whilst in dimension $n=2m-1$, $i^m c(\text{dvol}) = 1$ on all of $S$. 

Let $L(S)$ denote the line bundle associated to the Spin$^\mathbb{C}$-structure and $\mathcal{A}$ denote the set of unitary connections in $L(S)$ .Given $A \in \mathcal{A}$, we write $D_A$ for the associated Dirac operator. 

When $n$ is even, a spinor $\phi \in S_\pm$ defines a trace-free Hermitian endomorphism $E_\phi \colon S_\pm \to S_\pm$ via
\begin{equation}
E_\phi(\psi)= \left\langle \psi,\phi \right\rangle\phi - \frac{1}{r} |\phi|^2 \psi.
\label{Ephi}
\end{equation}
where $r$ is the rank of $S_\pm$. When $n$ is odd and $\phi \in S$ we use $E_\phi$ to denote the analogous trace-free endomorphism of $S$, where now $r$ is the rank of $S$.\par
The original Seiberg--Witten equations exhibit notable behavior when applied to Kähler surfaces, specifically in relation to vortex equations. For a hermitian line bundle $L$ (say $h$ is the hermitian metric) on a closed K\"ahler manifold $(X^n,\omega)$ of complex dimension $n$ and for any $\tau\in\mathbb{R},$ the \textit{$\tau$-vortex equations} for a unitary connection $A$ on $L$ and a section $\phi\in\Omega^0(X,L)$ are
\begin{align}
D_A^{0,1}\phi=0\label{Vortex 1}\\
  i\Lambda F_A+\frac{1}{2}\phi\otimes\phi^{*h}=\frac{1}{2}\tau \hspace{0.5 ex}\text{Id}\label{Vortex 2}\\
  F_A^{0,2}=0\label{Vortex 3}
\end{align}
$\Lambda$ is the $L^2$ adjoint of $(\wedge\omega).$\par 
Notice, equation \eqref{Vortex 3} says that $A$ defines a holomorphic structure on $L$ and then equation \eqref{Vortex 1} says that $\phi$ is a holomorphic section with respect to the holomoprhic structure induced by $A$. An equivalent way of thinking about the vortex equations is that we start with a holomorphic line bundle $(L, \bar\partial)$ and a holomorphic section $\phi$ and then look for a hermitian metric $h$ whose Chern connection (say $A$) solves equation \eqref{Vortex 2}. Any holomorphic connection (i.e., $F_A^{0,2}=0$) can be seen as a unitary connection for a hermitian metric, and a hermitian connection induces a unique unitary holomorphic connection (the Chern connection). Hence these two formalisms are not the same but equivalent. The moduli space of vortices can be seen as a fibration over the Picard variety \( \mathrm{Pic}_{L}(X) \):
\[
\mathcal{M}_{\text{vortex}}(L) \to \mathrm{Pic}_{L}(X),
\]
where the base space \( \mathrm{Pic}_{L}(X) \) parameterizes the holomorphic structures on \( L \) and the fiber over a point \( [L] \in \mathrm{Pic}_{L}(X) \) is the projective space \( \mathbb{CP}\big(H^0(X, L)\big) \), which is the space of holomorphic sections of \( L \), modulo the \( \mathbb{C}^* \)-action.\par 
Geometrically, each vortex pair $(A,\phi)$ determines an effective divisor of $X$, namely $\phi^{-1}(0).$ Conversely, an effective divisor determines a holomorphic line bundle together with a holomorphic section. If $\mathcal{D}$ is the effective divisor, $L_\mathcal{D}$ is the line bundle, and $\phi_D$ is the holomorphic section, then $\phi_\mathcal{D}^{-1}(0)$ is precisely $\mathcal{D}$. Furthermore, $\mathcal{D}$ determines an element of $H_{2n-2}(X,\mathbb{C})$ and thus by Poincar\'e duality an element $\eta_\mathcal{D}\in H^2(X,\mathbb{C})$. The first Chern class $c_1({L}_\mathcal{D})$ is represented by $\eta_\mathcal{D}.$ It follows that
\begin{align*}
    \text{deg}(L_\mathcal{D})=\int_X c_1(L_\mathcal{D})\wedge \omega^{n-1}=\#(\mathcal{D},[\omega^{n-1}])
\end{align*}
where $[\omega^{n-1}]$ is the $2$-cycle dual to $\omega^{n-1}$ and $\#(,)$ denotes the intersection pairing. Define Div$_+(X, \text{deg}(L))$ to be the set of effective divisors on $(X,\omega)$ such that their intersection pairing with $[\omega^{n-1}]$ is deg$(L)$, modulo linear equivalence. For $\tau>\frac{4\pi\text{ deg}(L)}{\text{Vol}(X)}$, $(A,\phi)\mapsto \phi^{-1}(0)$ defines a bijection on equivalence classes between $
\mathcal{M}_{\text{vortex}}(L)$ and Div$_+(X, \text{deg}(L))$ \cite{Brad}.\par
On closed Kähler surfaces, the Seiberg--Witten equations can be reduced, under certain conditions, to the vortex equations. This reduction was shown by Witten \cite{EW} as part of a broader connection between Seiberg--Witten theory and symplectic geometry. For example, in symplectic $4$-manifolds, Taubes established a correspondence where Seiberg--Witten solutions correspond to pseudoholomorphic curves \cite{Taubes4}, which are governed by the vortex-like equations under certain limits. This insight has allowed the Seiberg--Witten equations to be applied in tackling problems such as the Thom conjecture \cite{KM}, linking the solutions to invariants related to holomorphic curves and vortex configurations.\par
This motivates us to look for solutions to the Seiberg--Witten equations on higher dimensional K\"ahler manifolds and if possible find their correspondence with vortices. We studied the equations on K\"ahler three- and four-folds and indeed found solutions which can be described in terms of vortices on those K\"ahler manifolds.

In \S\ref{6d} we construct solutions of the $6$-dimensional Seiberg--Witten equations on a K\"ahler $3$-fold $(X,\omega).$ A fixed Riemannian metric on $X$ is implicit throughout this discussion. We choose two spinor bundles $S,\tilde{S}\rightarrow X$ coming from two (potentially different) spin$^\mathbb{C}$-structures on $X$. In dimension $6,$ Clifford multiplication gives the following isomorphisms:
\begin{align}
    c:i\Lambda^2\rightarrow i\mathfrak{su}(S_+)\label{Clifford 6 +}\\
    c:\Lambda^4\rightarrow i\mathfrak{su}(\tilde{S}_-)\label{Clifford 6 -}
\end{align}
Given $\phi\in\Gamma(S_+),\psi\in\Gamma(\tilde{S}_-),$ we write $q(\phi)$ and $q(\psi)$ for the corresponding differential forms $E_\phi$ and $E_\psi$ under \eqref{Clifford 6 +} and \eqref{Clifford 6 -}.
Let $\mathcal{A}$ and $\mathcal{B}$ denote the set of unitary connections on $L(S)$ and $L({\tilde{S}})$ respectively. The $6$-dimensional Seiberg--Witten equations for $\phi\in\Gamma(S_+),\psi\in\Gamma(\tilde{S}_-),\beta\in\Omega^3,A\in\mathcal{A}$ and $B\in\mathcal{B}$ are
\begin{align}
    \big(D_A+c(*\beta)\big)\phi=0\label{Dirac 6d +}\\
    F_A+2id^*\beta=q(\phi)\label{Curvature 6d +}\\
    \big(D_B+c(\beta)\big)\psi=0\label{Dirac 6d -}\\
    -{i}*F_B+2d\beta=q(\psi)\label{Curvature 6d -}
\end{align}
Since there are two spinors and two connections in play, the appropriate gauge group is now $\mathcal{G}=$ Map$(X,S^1\times S^1)$ where the first factor acts by pull back on $(A,\phi)$, the second by pull back on $(B,\psi)$ and both factors act trivially on $\beta.$ The above equations are elliptic modulo gauge \cite{JP}.\par
The canonical spinor bundles on $X$, say $S^{\text{can}}=S_+^{\text{can}}\oplus S_-^{\text{can}} $ are given by 
\begin{align*}
    S_+^{\text{can}}=\Lambda^0\oplus\Lambda^{0,2}\\
    S_-^{\text{can}}=\Lambda^{0,1}\oplus\Lambda^{0,3}
\end{align*}
The $(0,q)$-forms are induced from the unique almost complex structure $J$ compatible with the Riemannian and the K\"ahler metric on $X.$
Any other spin bundle on $X$ is obtained by twisting $S^{\text{can}}$ by a complex line bundle. Take two hermitian holomorphic line bundles $\mathcal{L}_0,\mathcal{L}_1\rightarrow X.$ We have
\begin{align*}
    S_+=\Lambda^0(X,{\mathcal{L}_0})\oplus\Lambda^{0,2}(X,\mathcal{L}_0)\\
    S_-=\Lambda^{0,1}(X,{\mathcal{L}_0})\oplus\Lambda^{0,3}(X,\mathcal{L}_0)
\end{align*}
and 
\begin{align*}
    \tilde{S}_+=\Lambda^0(X,{\mathcal{L}_1})\oplus\Lambda^{0,2}(X,\mathcal{L}_1)\\
    \tilde{S}_-=\Lambda^{0,1}(X,{\mathcal{L}_1})\oplus\Lambda^{0,3}(X,\mathcal{L}_1)
\end{align*}
For a closed K\"ahler $3$-fold $(X,\omega)$, we prove the following in \S\ref{Proof 6d}. 
\begin{theorem}\label{Theorem 6d}
    For a fixed pair of holomorphic structures on $\mathcal{L}_0$ and $\mathcal{L}_1$, there exists a non-trivial solution to the Seiberg--Witten equations \eqref{Dirac 6d +},\eqref{Curvature 6d +},\eqref{Dirac 6d -} and \eqref{Curvature 6d -} under the following conditions:
    \begin{align*}
    &1.\hspace{1 ex} \text{dim }H^0(X,\mathcal{L}_0)>0\\
    &2. \hspace{1 ex}c_1(K_X^{-1}\otimes \mathcal{L}_0^2)=a_0[\omega]\hspace{1 ex} \text{with $a_0<0$}\hspace{1 ex}\text{[this implies deg $(K_X^{-1}\otimes \mathcal{L}_0^2)<0$]}\\
    &3. \hspace{1 ex} \text{dim }H^0(X,K_X\otimes\mathcal{L}_1^{-1})>0\\
    &4. \hspace{1 ex}c_1(K_X^{-1}\otimes \mathcal{L}_1^2)=a_1[\omega]\hspace{1 ex} \text{with $a_1>0$}\hspace{1 ex}\text{[this implies deg $(K_X^{-1}\otimes \mathcal{L}_1^2)>0$]}
\end{align*}
\end{theorem}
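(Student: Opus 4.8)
The plan is to reduce the six-dimensional system to the abelian vortex equations \eqref{Vortex 1}--\eqref{Vortex 3} via the ansatz $\beta=0$, in the spirit of Witten's reduction on Kähler surfaces \cite{EW}. Setting $\beta=0$ decouples \eqref{Dirac 6d +}--\eqref{Curvature 6d +} for the pair $(A,\phi)$ from \eqref{Dirac 6d -}--\eqref{Curvature 6d -} for the pair $(B,\psi)$, so I would solve each pair independently and check that the surviving spinor is non-zero. The sign conditions in the theorem are designed exactly so that $\beta=0$ already works; should the induced vortex parameter fail to clear the relevant threshold one would instead introduce a harmonic three-form $\beta$ to shift it, but I expect this not to be needed here.

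For the first pair, write $\phi=(\alpha,\gamma)\in\Omega^0(X,\mathcal{L}_0)\oplus\Omega^{0,2}(X,\mathcal{L}_0)$ and use the Kähler identity $D_A=\sqrt{2}\,(\bar\partial_A+\bar\partial_A^{*})$ to split \eqref{Dirac 6d +} into its $\Omega^{0,1}$ and $\Omega^{0,3}$ components, giving $\bar\partial_A\alpha+\bar\partial_A^{*}\gamma=0$ and $\bar\partial_A\gamma=0$. The next step is a vanishing argument as in \cite{EW}: integrating the curvature equation \eqref{Curvature 6d +} and using the sign $\deg(K_X^{-1}\otimes\mathcal{L}_0^2)<0$ from condition $2$ forces $\gamma=0$, leaving $\alpha$ a holomorphic section of $\mathcal{L}_0$ as in \eqref{Vortex 1}. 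Under the Clifford isomorphism \eqref{Clifford 6 +}, the trace-free endomorphism $q(\phi)=E_\phi$ of \eqref{Ephi} with $\gamma=0$ is a multiple of $c(\omega)$, so \eqref{Curvature 6d +} decomposes into $F_A^{0,2}=0$ (equation \eqref{Vortex 3}), the vanishing of the primitive $(1,1)$-part, and an equation on $i\Lambda F_A$ proportional to $|\alpha|^2$. Converting the connection $A$ on $L(S)=K_X^{-1}\otimes\mathcal{L}_0^2$ into the Chern connection on $\mathcal{L}_0$ (absorbing the curvature of the canonical bundle) turns this into the $\tau$-vortex equation \eqref{Vortex 2} for $\mathcal{L}_0$, whose effective parameter exceeds the Bradlow bound $\tfrac{4\pi\deg(\mathcal{L}_0)}{\mathrm{Vol}(X)}$ precisely because $a_0<0$. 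With the holomorphic section supplied by condition $1$, Bradlow's theorem \cite{Brad} then produces a Hermitian metric whose Chern connection solves the vortex equations, hence \eqref{Dirac 6d +}--\eqref{Curvature 6d +}.

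For the second pair I would run the mirror argument with $\psi=(\mu,\nu)\in\Omega^{0,1}(X,\mathcal{L}_1)\oplus\Omega^{0,3}(X,\mathcal{L}_1)$ and $D_B=\sqrt{2}\,(\bar\partial_B+\bar\partial_B^{*})$. The opposite sign in condition $4$, namely $a_1>0$ for the determinant bundle $K_X^{-1}\otimes\mathcal{L}_1^2$, makes the vanishing argument eliminate the lower component $\mu$ and retain the top component $\nu$. Since $H^{0,3}(X,\mathcal{L}_1)\cong H^0(X,K_X\otimes\mathcal{L}_1^{-1})^{*}$ by Serre duality, condition $3$ guarantees a non-zero harmonic $(0,3)$-form valued in $\mathcal{L}_1$, which plays the role of the holomorphic section. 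Using the Clifford isomorphism \eqref{Clifford 6 -} on $\Lambda^4$ and rewriting $-i*F_B$ in terms of $i\Lambda F_B$, equation \eqref{Curvature 6d -} reduces once more to a vortex equation of the form \eqref{Vortex 2}, with the positive sign $a_1>0$ placing its parameter on the correct side of the Bradlow threshold; a second appeal to \cite{Brad} completes this pair.

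The \emph{main obstacle} I anticipate is the precise bookkeeping in the curvature equations: computing the images of the trace-free quadratic terms $q(\phi)$ and $q(\psi)$ under the Clifford isomorphisms \eqref{Clifford 6 +} and \eqref{Clifford 6 -}, tracking all constants, and converting the connection on the determinant line bundle into the Chern connection on $\mathcal{L}_0$ (resp. $\mathcal{L}_1$) so that the resulting equation is genuinely the $\tau$-vortex equation \eqref{Vortex 2}. The delicate point within this is verifying that the sign conditions $a_0<0$ and $a_1>0$ are exactly what push the induced parameter past Bradlow's bound, and that the accompanying vanishing argument indeed annihilates the intended spinor component. Once these identifications are in place, non-triviality is immediate because $\alpha$ and $\nu$ are non-zero, and the $\beta=0$ decoupling ensures that the two vortex solutions assemble into a genuine solution of the full six-dimensional system.
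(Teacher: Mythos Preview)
Your $\beta=0$ ansatz does not work in dimension $6$. The crucial difference from dimension $4$ is that \eqref{Curvature 6d +} constrains the \emph{full} $2$-form $F_A$, not just a self-dual part. With $\beta=0$ and $\phi=\alpha\in\Omega^0(X,\mathcal{L}_0)$, Lemma~\ref{Clifford omega} gives $q(\phi)=\tfrac{i}{4}|\alpha|^2\omega$, so the equation reads $F_A=\tfrac{i}{4}|\alpha|^2\omega$. You correctly record that this forces the primitive $(1,1)$-part of $F_A$ to vanish, but you then treat this as if it were part of the vortex package. It is not: since $F_A$ is closed, the equation forces $d(|\alpha|^2)\wedge\omega=0$, and Lefschetz injectivity on $1$-forms makes $|\alpha|^2$ constant. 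A holomorphic section of constant nonzero norm trivialises $\mathcal{L}_0$, so your ansatz fails for every nontrivial $\mathcal{L}_0$ allowed by the hypotheses. The same obstruction applies verbatim to the $(B,\psi)$ pair.

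The paper's proof uses $\beta$ in an essential way, setting $\beta=(\bar\partial f+\partial\bar f)\wedge\omega$ for a complex function $f$. The term $2id^*\beta$ then contributes $-4\partial\bar\partial(\mathrm{Re}\,f)$ to \eqref{Curvature 6d +}, and together with condition~2 and the $\partial\bar\partial$-lemma this absorbs the primitive $(1,1)$-part of $F_A$; what remains is a single scalar Kazdan--Warner equation for $\mathrm{Re}\,f$, solvable exactly when $a_0<0$. Symmetrically, $2d\beta$ contributes the term that kills the primitive part in \eqref{Curvature 6d -}, and $\mathrm{Im}\,f$ solves a second Kazdan--Warner equation requiring $a_1>0$. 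So $\beta$ is not an optional tweak for borderline parameters as you suggest, but the mechanism that makes the six-dimensional curvature equations solvable at all; the reduction is to Kazdan--Warner, and the identification with vortices comes only afterwards at the level of moduli.
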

For the non-trivial solution mentioned in the theorem, $\phi\in\Omega^0(X,\mathcal L_0)$ and $\psi\in\Omega^{0,3}(X,\mathcal L_1)\cong \Omega^0(X,K_X^{-1}\otimes\mathcal{L}_1).$ Moreover we can identify the space of the constructed solutions modulo gauge with $\mathcal{M}_{\text{vortex}}(\mathcal{L}_0)\times \mathcal{M}_{\text{vortex}}(K_X\otimes\mathcal{L}_1^{-1})$. Hence, the vortices on $X$ with respect to the line bundles $\mathcal{L}_0$ and $K_X\otimes\mathcal{L}_1^{-1}$ can be used to produce solutions of the Seiberg--Witten equations. Explicit examples are easy to find. We explain two classes of examples below. 
\begin{enumerate}
    \item\label{example 1} Take $3$ compact Riemann surfaces $(X_i,\omega_i)_{i=1,2,3}$ of the same genus $\textit{g}>1.$ $\omega_i$ denotes the normalized Kähler form on $X_i$ such that $\int_{X_i}\omega_i=1.$  Define $X:=X_1\times X_2\times X_3.$ The Kähler form on $X$ is $\omega:=\sum_{i=1}^{3}\pi_i^*\omega_i$ where $\pi_i$ is the projection of $X$ onto $X_i.$ Thereafter $K_X=\otimes_{i=1}^{3}\pi_i^*(K_{X_i}).$ If we take $\mathcal{L}_0=\underline{\mathbb{C}},$ the trivial line bundle and $\mathcal{L}_1=K_X$, these satisfy all the four necessary conditions:
\begin{align*}
    \text{For } \mathcal{L}_0=\underline{\mathbb{C}}, \text{ we have }H^0(X,\mathcal{L}_0)\cong \mathbb{C}\text{ and }c_1(K_X^{-1}\otimes \mathcal{L}_0^2)=-(2\textit{g}-2)\omega\\
    \text{For } \mathcal{L}_1=K_X, \text{ we have }H^0(X,K_X\otimes\mathcal{L}_1^{-1})\cong \mathbb{C}\text{ and }c_1(K_X^{-1}\otimes \mathcal{L}_1^2)=(2\textit{g}-2)\omega
\end{align*}
\item Another class of examples come from taking $X$ a hypersurface in $\mathbb{C}\mathbb{P}^4$ of very high degree, let's take a holomorphic section of $\mathcal{O}(d)\rightarrow \mathbb{C}\mathbb{P}^4,$ i.e., a homogeneous polynomial of degree $d$ in $5$ variables ($d$ to be determined later). If we choose this generically, the zero locus is a smooth algebraic variety $X$. The K\"ahler form $\omega$ on $X$ is given by restricting the Fubini-Study form $\omega_{FS}$ on $X$ and it lies in the cohomology class obtained by restricting $c_1(\mathcal{O}(1))$ on $X$. Meanwhile, by the adjunction formula, $K_X=\mathcal{O}(d-5)|_X$.\par
Now we take $\mathcal{L}_0=\mathcal{O}(k_0)|_X$ and $\mathcal{L}_1=\mathcal{O}(k_1)|_X$ ($k_0$ and $k_1$ to be determined later). For $m>0,$ dim $H^0(\mathbb{C}\mathbb{P}^4,\mathcal{O}(m))>0.$ Restricting these holomorphic sections to $X$ we can find line bundles on $X$ with non-trivial holomorphic sections. We need $k_0$ and $k_1$ to satisfy the following conditions:
\begin{align*}
    &1. \hspace{1 ex}k_0 >0 \text{ so that there are holomorphic sections of } \mathcal{L}_0.\\
    &2.\hspace{1 ex} (5-d)+2k_0 < 0. \text{ This ensures that } c_1(K_X^{-1}\otimes \mathcal{L}_0^2) \text{ is a negative multiple of } \omega.\\
    &3. \hspace{1 ex}d-5-k_1 >0 \text{ so that there are holomorphic sections of } K_X\otimes \mathcal{L}_1^{-1}.\\
    &4. \hspace{1 ex}(5-d)+2k_1 >0. \text{ This ensures that } c_1(K_X^{-1}\otimes \mathcal{L}_1^2) \text{ is a positive multiple of } \omega.
\end{align*}
Putting this conditions together we get
\begin{align*}
    0<k_0<\frac{d-5}{2}<k_1<d-5
\end{align*}
So, in this way we get many examples by first choosing $d>7$ and then picking $k_0$ and $k_1$ as we like in the above ranges.
\end{enumerate}\par
Notice in both the examples, $X$ happens to have ample canonical bundle, one might wonder if one can also produce solutions on Fano and Calabi-Yau manifolds. For this we look a ``perturbed" version of the Seiberg--Witten equations for $\phi\in\Gamma(S_+),\psi\in\Gamma(\tilde{S}_-),\beta\in\Omega^3,A\in\mathcal{A}$ and $B\in\mathcal{B}$.
\begin{align}
    \big(D_A+c(*\beta)\big)\phi=0\label{Dirac 6d + 100}\\
    F_A+2id^*\beta+2\pi ir_0\omega=q(\phi)\label{Curvature 6d + 100}\\
    \big(D_B+c(\beta)\big)\psi=0\label{Dirac 6d - 100}\\
    -{i}*F_B+2d\beta-\pi r_1\omega^2=q(\psi)\label{Curvature 6d - 100}
\end{align}
The equations are parametrized by two constants $r_0,r_1\in\mathbb{R}.$ Using the same reasoning as in the proof of theorem \ref{Theorem 6d}, we arrive at the following result. 
\begin{theorem}\label{SW 6d CY Fano}
    For a fixed pair of holomorphic structures on $\mathcal{L}_0$ and $\mathcal{L}_1$, there exists a non-trivial solution to the Seiberg--Witten equations \eqref{Dirac 6d + 100},\eqref{Curvature 6d + 100},\eqref{Dirac 6d - 100} and \eqref{Curvature 6d - 100} under the following conditions:
    \begin{align*}
    &1.\hspace{1 ex} \text{dim }H^0(X,\mathcal{L}_0)>0\\
    &2. \hspace{1 ex}c_1(K_X^{-1}\otimes \mathcal{L}_0^2)=a_0[\omega]\hspace{1 ex} \text{with $a_0<r_0$}\\
    &3. \hspace{1 ex} \text{dim }H^0(X,K_X\otimes\mathcal{L}_1^{-1})>0\\
    &4. \hspace{1 ex}c_1(K_X^{-1}\otimes \mathcal{L}_1^2)=a_1[\omega]\hspace{1 ex} \text{with $a_1>-r_1$}
\end{align*}
\end{theorem}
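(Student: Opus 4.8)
The plan is to re-run the reduction of the six-dimensional system to the vortex equations exactly as in the proof of Theorem~\ref{Theorem 6d}, and to check that the two extra terms $2\pi ir_0\omega$ and $-\pi r_1\omega^2$ only move the threshold in the degree hypotheses. Concretely, I would keep the same ansatz: take $\phi$ in the lowest summand $\Omega^0(X,\mathcal L_0)$ of $S_+$ and $\psi$ in the top summand $\Omega^{0,3}(X,\mathcal L_1)\cong\Omega^0(X,K_X^{-1}\otimes\mathcal L_1)$ of $\tilde S_-$, and let the three-form $\beta$ be determined in the same way as there, so that the coexact piece $d^*\beta$ (resp.\ the exact piece $d\beta$) absorbs whatever part of the curvature equation \eqref{Curvature 6d + 100} (resp.\ \eqref{Curvature 6d - 100}) is not proportional to $\omega$ (resp.\ $\omega^2$). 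Since the twisted Kähler Dirac operator is $\sqrt2(\bar\partial+\bar\partial^*)$, equations \eqref{Dirac 6d + 100} and \eqref{Dirac 6d - 100} become the holomorphicity conditions \eqref{Vortex 1}, \eqref{Vortex 3} for $\mathcal L_0$ and for $K_X\otimes\mathcal L_1^{-1}$ respectively, using hypotheses~1 and~3 to supply the nonzero holomorphic sections.

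The new input is purely algebraic. For $\phi\in\Omega^0(X,\mathcal L_0)$ the endomorphism $E_\phi$ of \eqref{Ephi} is diagonal on $S_+=\Lambda^0\oplus\Lambda^{0,2}$ with the same eigenvalue pattern as $c(i\omega)$, so under \eqref{Clifford 6 +} one has $q(\phi)$ equal to a positive multiple of $i|\phi|^2\omega$; the analogous computation under \eqref{Clifford 6 -} shows $q(\psi)$ is a multiple of $|\psi|^2\omega^2$. Because $\omega$ and $\omega^2$ are parallel (hence harmonic) and lie in the $\mathbb C\omega$- and $\mathbb C\omega^2$-directions, the perturbations $2\pi ir_0\omega$ and $-\pi r_1\omega^2$ sit in exactly the same component of $\Lambda^2$ (resp.\ $\Lambda^4$) as $q(\phi)$ (resp.\ $q(\psi)$). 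They therefore leave the holomorphicity, the primitive, and the Dirac parts of the reduced system untouched and only shift the scalar (trace) equation; in vortex language this replaces the parameter $\tau$ of \eqref{Vortex 2} by $\tau$ plus a fixed multiple of $r_0$ (resp.\ $r_1$).

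To read off the existence threshold I would pair the two curvature equations with the appropriate power of $\omega$, namely $\omega^2$ for the two-form equation \eqref{Curvature 6d + 100} and $\omega$ for the four-form equation \eqref{Curvature 6d - 100}, and integrate over $X$. Because $\omega$ and $\omega^2$ are closed and harmonic while $d^*\beta$ is coexact and $d\beta$ is exact, the $\beta$-terms integrate to zero (the latter by Stokes, using $d\omega=0$), while the topological terms give $\int_X F_A\wedge\omega^2\propto\deg(K_X^{-1}\otimes\mathcal L_0^2)$ and its analogue for $B$. Using $c_1(K_X^{-1}\otimes\mathcal L_0^2)=a_0[\omega]$ and $c_1(K_X^{-1}\otimes\mathcal L_1^2)=a_1[\omega]$, the integrated identities become $2\pi(r_0-a_0)\,\mathrm{Vol}(X)=c\,\|\phi\|_{L^2}^2\ge0$ and $2\pi(r_1+a_1)\,\mathrm{Vol}(X)=c'\,\|\psi\|_{L^2}^2\ge0$ with $c,c'>0$. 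A nontrivial $\phi$ (resp.\ $\psi$) thus forces $a_0<r_0$ (resp.\ $a_1>-r_1$), which are precisely the relaxed hypotheses~2 and~4; conversely these inequalities are exactly the shifted Bradlow bound, so invoking Bradlow's existence theorem \cite{Brad} for the two vortex problems produces the sections, and reversing the reduction yields the desired nontrivial solution. As in Theorem~\ref{Theorem 6d}, the construction is bijective on gauge classes and identifies the solution set with $\mathcal M_{\text{vortex}}(\mathcal L_0)\times\mathcal M_{\text{vortex}}(K_X\otimes\mathcal L_1^{-1})$.

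The one genuinely delicate point, inherited verbatim from Theorem~\ref{Theorem 6d}, is the determination of $\beta$: one must check that, after subtracting off the $\omega$- and $\omega^2$-parts, the remainder of each curvature equation is coexact (resp.\ exact) and hence solvable for $\beta$, and that the resulting $\beta$ remains compatible with the Dirac equations under the ansatz. The perturbations do not affect this step at all, since they are harmonic and lie in the trace direction and so contribute nothing to the coexact/exact remainder. I therefore expect the only substantive change from Theorem~\ref{Theorem 6d} to be the one-line shift of the Bradlow threshold computed above, with the rest of the argument going through unchanged.
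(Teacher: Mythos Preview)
Your proposal is correct and matches the paper's approach, which for this theorem literally says only ``using the same reasoning as in the proof of Theorem~\ref{Theorem 6d}.'' You have correctly identified the key point: the harmonic perturbations $2\pi i r_0\omega$ and $-\pi r_1\omega^2$ lie entirely in the $\omega$- (resp.\ $\omega^2$-) trace direction, so under the same ansatz they leave the Dirac equations, the $(0,2)$ curvature pieces, and the $\partial\bar\partial$-parts untouched and only shift the constant on the right-hand side of the resulting scalar equation from $-2\pi a_0$ to $2\pi(r_0-a_0)$ (and analogously $2\pi a_1\mapsto 2\pi(a_1+r_1)$).

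One small difference in presentation is worth flagging. The paper's proof of Theorem~\ref{Theorem 6d} does not invoke Bradlow's theorem \cite{Brad} for existence; instead it writes out the explicit ansatz $\phi=e^{-2f}\varphi$, $\beta=(\bar\partial f+\partial\bar f)\wedge\omega$, makes a conformal change of the hermitian metric on $\mathcal L_0$, and reduces the curvature equation directly to a Kazdan--Warner equation of the form $2\Delta(-\mathrm{Re}f)+C\,e^{-6\mathrm{Re}f}=-2\pi a_0$, whose solvability via \cite{KW} requires exactly the positivity of the right-hand side. Your route---reduce to the vortex system and cite \cite{Brad}---is equivalent (Bradlow's proof is itself a Kazdan--Warner argument), and arguably cleaner conceptually, but if you want to match the paper verbatim you should phrase the endgame as ``the perturbation replaces $-2\pi a_0$ by $2\pi(r_0-a_0)$ in the Kazdan--Warner equation, which is solvable iff $a_0<r_0$,'' and similarly for the second pair.
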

$r_0=r_1=0$ gives us theorem \ref{Theorem 6d}. The non-trivial solution established in the theorem satisfies $\phi\in\Omega^0(X,\mathcal L_0)$ and $\psi\in\Omega^{0,3}(X,\mathcal L_1)\cong \Omega^0(X,K_X^{-1}\otimes\mathcal{L}_1).$ For large enough $r_0$ and $r_1$, one can produce solutions to the equations \eqref{Dirac 6d + 100},\eqref{Curvature 6d + 100},\eqref{Dirac 6d - 100} and \eqref{Curvature 6d - 100} on Calabi-Yau and Fano three-folds and the moduli space of the constructed solutions in the theorem can be identified again using vortices on $X$ with respect to $\mathcal{L}_0$ and ${K_X\otimes\mathcal{L}_1^{-1}}.$\par
Too see an explicit example, take a holomorphic section of $\mathcal{O}(d)\rightarrow \mathbb{C}\mathbb{P}^4,$ i.e., a homogeneous polynomial of degree $d$ in $5$ variables ($d$ to be determined later). If we choose this generically, the zero locus is a smooth algebraic variety $X$. The K\"ahler form $\omega$ on $X$ is given by restricting the Fubini-Study form $\omega_{FS}$ on $X$ and it lies in the cohomology class obtained by restricting $c_1(\mathcal{O}(1))$ on $X$. Meanwhile, by the adjunction formula, $K_X=\mathcal{O}(d-5)|_X$.\par
Now we take $\mathcal{L}_0=\mathcal{O}(k_0)|_X$ and $\mathcal{L}_1=\mathcal{O}(k_1)|_X$ ($k_0$ and $k_1$ to be determined later). For $m>0,$ dim $H^0(\mathbb{C}\mathbb{P}^4,\mathcal{O}(m))>0.$ Restricting these holomorphic sections to $X$ we can find line bundles on $X$ with non-trivial holomorphic sections. Following theorem \ref{SW 6d CY Fano} we need $k_0$ and $k_1$ to satisfy the following conditions:
\begin{align*}
    &1. \hspace{1 ex}k_0 >0 \\
    &2.\hspace{1 ex} (5-d)+2k_0 < r_0\\
    &3. \hspace{1 ex}d-5-k_1 >0 \\
    &4. \hspace{1 ex}(5-d)+2k_1 >-r_1
\end{align*}
Hence for fixed $k_0,k_1,d$, if $k_0 >0,d-5-k_1 >0$ there exists $R>0,$ such that all four conditions are satisfied for $r_0,r_1>R.$ $d=5$ gives us Calabi-Yau three-folds and $d<5$ give us Fano three-folds.\par
In \S\ref{Riemann surface times C2} we produce solutions of another ``perturbed" version of the Seiberg--Witten equations on $\Sigma\times\mathbb{C}^2$ ($\Sigma$ being a closed Riemann surface). The solutions found are invariant in the $\mathbb{C}^2$ directions and can be described in terms of vortices on $\Sigma$.\par 
We denote the K\"ahler forms on $\Sigma$ and $\mathbb{C}^2$ by $\omega_\Sigma$ and $\omega_{\mathbb{C}^2}$ respectfully. The ``perturbed" Seiberg--Witten equations on $\Sigma\times\mathbb{C}^2$ are 
\begin{align}
    \big(D_A+c(*\beta)\big)\phi=0\label{Dirac 6d + sigma}\\
    F_A+2id^*\beta+ir_0\pi_2^*(\omega_{\mathbb{C}^2})=q(\phi)\label{Curvature 6d + sigma}\\
    \big(D_B+c(\beta)\big)\psi=0\label{Dirac 6d - sigma}\\
    -{i}*F_B+2d\beta-r_1*\big(\pi_2^*(\omega_{\mathbb{C}^2})\big)=q(\psi)\label{Curvature 6d - sigma}
\end{align}
The equations are parametrized by two real constants $r_0,r_1.$ $\pi_1,\pi_2$ denote the standard projection maps $\pi_1:\Sigma\times\mathbb{C}^2\rightarrow \Sigma$ and $\pi_2:\Sigma\times\mathbb{C}^2\rightarrow\mathbb{C}^2.$\par
The spin bundles on $\mathbb{C}^2$ are $S_+(\mathbb{C}^2)=\Lambda^0\oplus\Lambda^{0,2},S_-(\mathbb{C}^2)=\Lambda^{0,1}.$ For two hermitian holomorphic line bundles $\mathcal{L}_0$ and $\mathcal{L}_1,$ we define two spin bundles of opposite chirality on $\Sigma\times\mathbb{C}^2:$
\begin{align*}
    S_+(\Sigma\times\mathbb{C}^2)=\Big(\pi_1^*(\mathcal{L}_0)\otimes \pi_2^*\big(S_+(\mathbb{C}^2)\big)\Big)\oplus \Big(\pi_1^*(K_\Sigma^{-1}\otimes\mathcal{L}_0)\otimes \pi_2^*\big(S_-(\mathbb{C}^2)\big)\Big)\\
    \tilde{S}_-(\Sigma\times\mathbb{C}^2)=\Big(\pi_1^*(\mathcal{L}_1)\otimes \pi_2^*\big(S_-(\mathbb{C}^2)\big)\Big)\oplus \Big(\pi_1^*(K_\Sigma^{-1}\otimes\mathcal{L}_1)\otimes \pi_2^*\big(S_+(\mathbb{C}^2)\big)\Big)
\end{align*}
We prove the following.
\begin{theorem}\label{Theorem 6d sigma}
    For a fixed pair of holomorphic structures on $\mathcal{L}_0$ and $\mathcal{L}_1$, there exists a non-trivial solution to the Seiberg--Witten equations \eqref{Dirac 6d + sigma},\eqref{Curvature 6d + sigma},\eqref{Dirac 6d - sigma},\eqref{Curvature 6d - sigma} under the following conditions:
    \begin{align*}
    &1.\hspace{1 ex} \text{dim }H^0(\Sigma,\mathcal{L}_0)>0\\
    &2.\hspace{1 ex}r_0=\frac{2\pi}{\text{vol}(\Sigma)}\text{deg}(K_\Sigma-2\mathcal{L}_0)>0\\
    &3.\hspace{1 ex} \text{dim }H^0(\Sigma,K_\Sigma-\mathcal{L}_1)>0\\
    &4.\hspace{1 ex} r_1=-\frac{2\pi}{\text{vol}(\Sigma)}\text{deg}(K_\Sigma-2\mathcal{L}_1)>0
\end{align*}
\end{theorem}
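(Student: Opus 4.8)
The plan is to reduce the six–dimensional system \eqref{Dirac 6d + sigma}--\eqref{Curvature 6d - sigma} to a pair of vortex equations on $\Sigma$ by imposing an ansatz invariant under the translations of $\mathbb{C}^2$, in the same spirit as the compact reduction behind Theorem \ref{Theorem 6d}, but now with the flat factor $\mathbb{C}^2$ playing a decisive new role. Concretely I would take $\phi$ in the lowest summand $\pi_1^*\mathcal{L}_0\otimes\pi_2^*\Lambda^0$ of $S_+$, pulled back from a section $\phi_0$ of $\mathcal{L}_0\to\Sigma$; take $\psi$ in the top summand $\pi_1^*(K_\Sigma^{-1}\otimes\mathcal{L}_1)\otimes\pi_2^*\Lambda^{0,2}$, pulled back from $\psi_0\in\Gamma(K_\Sigma^{-1}\otimes\mathcal{L}_1)$, which under Serre duality corresponds to a holomorphic section of $K_\Sigma\otimes\mathcal{L}_1^{-1}$ (whence condition 3); and take $A=\pi_1^*A_0$, $B=\pi_1^*B_0$ pulled back from $\Sigma$, noting that $L(S)=\pi_1^*(K_\Sigma^{-1}\otimes\mathcal{L}_0^2)$ and $L(\tilde S)=\pi_1^*(K_\Sigma^{-1}\otimes\mathcal{L}_1^2)$ since $K_{\mathbb{C}^2}$ is trivial.

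A short Clifford computation, using $\omega=\pi_1^*\omega_\Sigma+\pi_2^*\omega_{\mathbb{C}^2}$ together with the fact that $c(\omega_\Sigma)$ and $c(\omega_{\mathbb{C}^2})$ act by $i(1-2p)$ and $i(2-2q)$ on the $(p,q)$ summands, shows that for $\phi$ in the lowest piece $q(\phi)=\tfrac{i}{4}|\phi|^2(\omega_\Sigma+\omega_{\mathbb{C}^2})$, with the two Kähler factors entering with equal coefficient, and an analogous $4$-form identity holds for $q(\psi)$. The crucial observation is that $F_A=\pi_1^*F_{A_0}$ and $F_B=\pi_1^*F_{B_0}$ have no component in the $\mathbb{C}^2$ directions, so the $\omega_{\mathbb{C}^2}$ part of \eqref{Curvature 6d + sigma} cannot be matched by curvature. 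This is exactly what forces the harmonic perturbation $r_0\pi_2^*\omega_{\mathbb{C}^2}$ (note $\pi_2^*\omega_{\mathbb{C}^2}$ is parallel, hence harmonic) and the auxiliary field $\beta$. Writing $\beta=\gamma\wedge\pi_2^*\omega_{\mathbb{C}^2}$ for a $1$-form $\gamma$ on $\Sigma$, one has $d^*\beta=(d^*_\Sigma\gamma)\,\pi_2^*\omega_{\mathbb{C}^2}$ and $d\beta=(d_\Sigma\gamma)\wedge\pi_2^*\omega_{\mathbb{C}^2}$, so the flat–direction components of the two curvature equations become scalar equations of the form $d^*_\Sigma\gamma=\tfrac18|\phi_0|^2-\tfrac{r_0}{2}$ and a companion equation for the coclosed part of $\gamma$ coming from \eqref{Curvature 6d - sigma}. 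Integrating over the closed surface $\Sigma$ annihilates the left-hand sides and forces $r_0=\|\phi_0\|_{L^2}^2/(4\,\text{vol}\,\Sigma)$ together with the analogous relation for $r_1$.

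Reading off the $\omega_\Sigma$-components gives $F_{A_0}=\tfrac{i}{4}|\phi_0|^2\omega_\Sigma$ and its analogue for $B_0$; splitting off the fixed Levi--Civita connection on $K_\Sigma^{-1}$ turns these into the curvature equation \eqref{Vortex 2} of a $\tau$-vortex on $\mathcal{L}_0$ and on $K_\Sigma\otimes\mathcal{L}_1^{-1}$, with $\tau$ fixed by the genus of $\Sigma$. The Dirac equations \eqref{Dirac 6d + sigma} and \eqref{Dirac 6d - sigma}, after restricting $c(*\beta)$ and $c(\beta)$ to the relevant summands, reduce to twisted holomorphicity $\bar\partial_{A_0}\phi_0\propto(*_\Sigma\gamma)^{0,1}\phi_0$ and its counterpart; since $\gamma^{0,1}$ and $(*_\Sigma\gamma)^{0,1}$ are both $\bar\partial$ of functions, these are ordinary holomorphicity with respect to the fixed holomorphic structures followed by complexified gauge transformations, i.e. exactly \eqref{Vortex 1} absorbed into the Hermitian metrics. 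Combining the integrated $\omega_\Sigma$-equation with the forced value of $r_0$ then yields $\|\phi_0\|_{L^2}^2=8\pi\,\text{deg}(K_\Sigma-2\mathcal{L}_0)$, so that $r_0=\tfrac{2\pi}{\text{vol}\,\Sigma}\text{deg}(K_\Sigma-2\mathcal{L}_0)$ as in condition 2, and symmetrically $r_1=-\tfrac{2\pi}{\text{vol}\,\Sigma}\text{deg}(K_\Sigma-2\mathcal{L}_1)$; positivity of these expressions is precisely the Bradlow stability bound.

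The construction then closes up: conditions 1 and 3 supply the holomorphic sections, conditions 2 and 4 supply the Bradlow stability, and \cite{Brad} produces the unique vortex solutions on $\mathcal{L}_0$ and on $K_\Sigma\otimes\mathcal{L}_1^{-1}$; pulling these back together with the $\beta$ solved for above gives a nontrivial solution of \eqref{Dirac 6d + sigma}--\eqref{Curvature 6d - sigma}, the family being parametrised by $\mathcal{M}_{\text{vortex}}(\mathcal{L}_0)\times\mathcal{M}_{\text{vortex}}(K_\Sigma\otimes\mathcal{L}_1^{-1})$. \emph{The main obstacle} I anticipate is handling the single shared field $\beta$, which enters all four equations at once: one must verify that its contributions to the two Dirac equations are genuinely complexified gauge transformations, so that the prescribed holomorphic structures are preserved and the two vortex problems do not truly couple, while its contributions to the two curvature equations supply the missing $\mathbb{C}^2$-direction flux and pin $r_0$ and $r_1$ to the stated values. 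Checking that the integrated consistency conditions reproduce conditions 2 and 4 exactly, rather than some nearby constants, is the computational heart of the argument.
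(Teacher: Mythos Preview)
Your overall strategy coincides with the paper's: the same translation-invariant ansatz, $\phi$ in the bottom summand and $\psi$ in the top one, connections pulled back from $\Sigma$, and $\beta$ a $1$-form on $\Sigma$ wedged with $\omega_{\mathbb{C}^2}$ (the paper writes $\beta=(\bar\partial f+\partial\bar f)\wedge\omega$, which collapses to this since the $\omega_\Sigma$ contribution vanishes). Your integrated consistency check producing $r_0=\tfrac{2\pi}{\mathrm{vol}\,\Sigma}\deg(K_\Sigma-2\mathcal L_0)$ is also exactly what the paper finds.

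The gap is in the decoupling you assert. You want the $\omega_\Sigma$-component of \eqref{Curvature 6d + sigma} to be the vortex curvature equation solvable by \cite{Brad}, and the $\omega_{\mathbb{C}^2}$-component to independently determine $\gamma$. But the spinor that enters $q(\phi)$ is \emph{not} $A_0$-holomorphic: the Dirac equation forces $\phi_0=e^{-2f}\varphi$ with $\varphi$ holomorphic and $\bar\partial f=\gamma^{0,1}$, so $|\phi_0|^2=e^{-4\,\mathrm{Re}f}|\varphi|^2$ already contains the function whose Laplacian is $d^*_\Sigma\gamma$. Thus the $\omega_\Sigma$-equation is not a standard $\tau$-vortex equation for $(A_0,\phi_0)$ in any fixed metric; both components are coupled through $\mathrm{Re}f$ \emph{and} the conformal change $\lambda$ of the hermitian metric on $\mathcal L_0$. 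If you first run Bradlow and then try to solve for $\gamma$, the Dirac equation will fail unless $\gamma=0$.

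The paper resolves this precisely: it parametrises $\gamma$ by a complex $f$ from the outset so that the Dirac equations are tautologically satisfied, then takes linear combinations of the two scalar equations coming from the $\omega_\Sigma$ and $\omega_{\mathbb{C}^2}$ components. One combination, for $(\lambda-4\,\mathrm{Re}f)$, is a Kazdan--Warner equation (this is the vortex equation in disguise, and is where \cite{Brad} secretly enters); the other, for $(2\,\mathrm{Re}f+\lambda)$, is a Poisson equation whose integral obstruction vanishes exactly when $r_0$ has the stated value. The same recombination works for $(\mathrm{Im}f,\tilde\lambda)$ in the $\psi$-half. Your obstacle paragraph correctly flags the shared $\beta$ as the crux; the missing step is this recombination, without which neither ``the $\omega_\Sigma$-part is vortex'' nor ``the $\omega_{\mathbb{C}^2}$-part fixes $\gamma$'' is separately true.
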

The solutions found in theorem \ref{Theorem 6d sigma} are invariant in the $\mathbb{C}^2$ directions, and modulo gauge the space of the solutions found can be identified with $\mathcal{M}_{\text{vortex}}(\mathcal{L}_0)\times \mathcal{M}_{\text{vortex}}(K_\Sigma-\mathcal{L}_1)$. Hence, vortices on $\Sigma$ with respect to the line bundles $\mathcal{L}_0$ and $K_\Sigma-\mathcal{L}_1$ can be used to find solutions to the Seiberg--Witten equations \eqref{Dirac 6d + sigma},\eqref{Curvature 6d + sigma},\eqref{Dirac 6d - sigma},\eqref{Curvature 6d - sigma}.\par
Explicit examples are easy to find. One can start with any compact Riemann surface $\Sigma$ with genus $\textit{g}>1.$ Then for $\mathcal{L}_0=\underline{\mathbb{C}}$, the trivial line bundle and $\mathcal{L}_1=K_\Sigma,$ the canonical line bundle and $r_0=r_1=\frac{2\pi}{\text{vol}(\Sigma)}(2\textit{g}-2)$, the four necessary conditions are satisfied.

In \S\ref{8d} we construct solutions of the $8$-dimensional Seiberg--Witten equations on a closed K\"ahler $4$-fold $(X,\omega).$ On a Riemannian $8$-manifold, the hodge-star operator $*$ squares to $1$ on $\Lambda^4$. Hence it induces a splitting of the four forms into self-dual and anti-self dual four forms: $\Lambda^4=\Lambda^{4}_+\oplus\Lambda^{4}_-.$ In dimension $8,$ Clifford multiplication gives an isomorphism:
\begin{align}\label{Clifford 8d}
    c:i\Lambda^2\oplus\Lambda^{4}_+\rightarrow i\mathfrak{su}(S_+)
\end{align}
Given $\phi\in\Gamma(S_+),$ we write $q(\phi)$ for the corresponding differential form $E_\phi$ under \eqref{Clifford 8d}.\par 
We define the ``perturbed" Seiberg--Witten equations on $X$ for $(\phi,A,\beta),\phi\in\Gamma(S_+),A\in\mathcal{A},\beta\in\Omega^3:$
\begin{align}
   \big(D_A+(1+i)c(\beta)\big)\phi=0\label{Dirac 8d}\\
   F_A+2d\beta^++2id^*\beta+a\omega^2=q(\phi)\label{Curvature 8d}
\end{align}
$a\in\mathbb{R}$ and $d\beta^+$ denotes the self-dual part of $d\beta$. Notice that the perturbation term: $a\omega^2$ is \textit{harmonic} and can be thought of as the cohomology class of $d\beta^+.$ $a=0$ gives us back the $8$-dimensional Seiberg--Witten equations defined in \cite{JP}. The gauge group $\mathcal{G}=$ Map$(X,S^1)$ acts on $(\phi,A)$ by pull back and trivially on $\beta.$ The equations are elliptic modulo gauge.\par
For a hermitian holomorphic line bundle $\mathcal{L}\rightarrow X,$ one can take
\begin{align*}
    S_+(X)=\Lambda^0(X,\mathcal{L})\oplus \Lambda^{0,2}(X,\mathcal{L})\oplus \Lambda^{0,4}(X,\mathcal{L}).
\end{align*}
We prove the following theorem in \S\ref{Solution 8d}.
\begin{theorem}\label{theorem 8d}
    For a fixed choice of holomorphic structure on $\mathcal{L},$ there exists a non-trivial solution to the Seiberg--Witten equations \eqref{Dirac 8d},\eqref{Curvature 8d} under the following conditions:
    \begin{align*}
    &1.\hspace{1 ex} \text{dim }H^0(X,\mathcal{L})>0\\
    &2. \hspace{1 ex} a<0 \text{ and }c_1(K_X^{-1}\otimes \mathcal{L}^2)=\frac{2a}{\pi}[\omega]\hspace{1 ex}\text{[this implies deg $(K_X^{-1}\otimes \mathcal{L}^2)<0$]}\\
    &\hspace{32 ex}\text{or}\\
    &1. \hspace{1 ex} \text{dim }H^0(X,K_X\otimes\mathcal{L}^{-1})>0\\
    &2. \hspace{1 ex} a<0 \text{ and }c_1(K_X^{-1}\otimes \mathcal{L}^2)=-\frac{2a}{\pi}[\omega]\hspace{1 ex}\text{[this implies deg $(K_X^{-1}\otimes \mathcal{L}^2)>0$]}
\end{align*}
\end{theorem}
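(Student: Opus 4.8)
The plan is to follow the Kähler reduction behind Theorem~\ref{Theorem 6d}, now carried out on the three summands of $S_+=\Lambda^0(X,\mathcal{L})\oplus\Lambda^{0,2}(X,\mathcal{L})\oplus\Lambda^{0,4}(X,\mathcal{L})$, treating the two alternatives in parallel: they are interchanged by the symmetry $\Lambda^{0,q}\leftrightarrow\Lambda^{0,4-q}$ together with Serre duality $H^{0,4}(X,\mathcal{L})\cong H^0(X,K_X\otimes\mathcal{L}^{-1})^{*}$, which converts the section of the first case into the required section of $K_X\otimes\mathcal{L}^{-1}$. I would make the holomorphic ansatz, seeking $\phi$ concentrated in an extreme summand: $\phi=\alpha\in\Omega^0(X,\mathcal{L})$ in the first case and $\phi=\delta\in\Omega^{0,4}(X,\mathcal{L})$ in the second. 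Since $D_A=\sqrt2(\bar\partial_A+\bar\partial_A^{*})$ on a Kähler manifold, I would decompose the perturbed Dirac equation~\eqref{Dirac 8d} by Dolbeault bidegree; its $\Lambda^{0,1}$-component reads $\bar\partial_A\alpha+\tfrac{1+i}{\sqrt2}\theta_\beta\,\alpha=0$, where $\theta_\beta\in\Omega^{0,1}$ is the contraction of the $(1,2)$-part of $\beta$, so that $\alpha$ is holomorphic for the Dolbeault operator on $\mathcal{L}$ twisted by $\beta$, while the $\Lambda^{0,3}$-component forces the $(0,3)$-part of $\beta$ to drop out against $\alpha$. Hypothesis~$1$ is exactly what provides a nonzero such section.

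Next I would compute $q(\phi)$ and split the curvature equation. Since $\alpha$ occupies a single summand, $E_\alpha$ is the diagonal, trace-free endomorphism equal to $\tfrac{r-1}{r}|\alpha|^2$ on $\Lambda^0(X,\mathcal{L})$ and $-\tfrac1r|\alpha|^2$ on the other summands, with $r=8$; as the only $U(4)$-invariant $2$- and $4$-forms are multiples of $\omega$ and $\omega^2$, the isomorphism~\eqref{Clifford 8d} gives $q(\alpha)=c(\lambda|\alpha|^2 i\omega)+c(\mu|\alpha|^2\omega^2)$ for universal constants $\lambda,\mu$ to be read off from the eigenvalues of $c(i\omega)$ and $c(\omega^2)$ on $\Lambda^{0,q}$. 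Splitting~\eqref{Curvature 8d} into its $i\Lambda^2$- and $\Lambda^4_+$-parts then yields
\begin{align*}
F_A+2id^*\beta&=\lambda|\alpha|^2\,i\omega,\\
2d\beta^{+}+a\omega^2&=\mu|\alpha|^2\,\omega^2.
\end{align*}
Applying $\Lambda$ to the first equation, together with the co-closedness required of $\beta$, reduces it to a $\tau$-vortex equation~\eqref{Vortex 2} for the connection induced on $\mathcal{L}$, and I would solve the pair (holomorphic section, vortex equation) by Bradlow's theorem~\cite{Brad}; its stability bound $\tau>\tfrac{4\pi\deg\mathcal{L}}{\mathrm{Vol}(X)}$ is the degree inequality forced by $a<0$, and this is the step that identifies the solution set with $\mathcal{M}_{\text{vortex}}(\mathcal{L})$, respectively $\mathcal{M}_{\text{vortex}}(K_X\otimes\mathcal{L}^{-1})$.

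It then remains to recover $\beta$. Using that $A$ is a connection on $L(S)=K_X^{-1}\otimes\mathcal{L}^2$, so $[\tfrac{i}{2\pi}F_A]=c_1(K_X^{-1}\otimes\mathcal{L}^2)$, I would solve the two displayed equations for $\beta$ by Hodge theory: the cokernel of $\beta\mapsto(d^*\beta,d\beta^{+})$ consists of closed $2$-forms paired with self-dual harmonic $4$-forms, so solvability reduces to orthogonality against these. By the Lefschetz identities every primitive harmonic form is pointwise orthogonal to $\omega$ and $\omega^2$, so the obstructions from primitive classes vanish automatically, leaving only the pairings against $\omega$ and $\omega^2$. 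The pairing against $\omega$ is killed precisely by the cohomological condition $c_1(K_X^{-1}\otimes\mathcal{L}^2)=\tfrac{2a}{\pi}[\omega]$, which makes $[\tfrac i2 F_A]$ a multiple of $[\omega]$; the $\omega^2$-component, which $d\beta^{+}$ cannot reach, is exactly what the harmonic perturbation $a\omega^2$ absorbs, and the two pairings together pin $\|\alpha\|_{L^2}^2$ to a positive multiple of $-a$, so that $a<0$ is what makes the solution non-trivial.

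The main obstacle I expect is twofold. First, the two integral normalisations extracted from the $\omega$- and $\omega^2$-pairings must agree, and this consistency is not automatic: it is forced by the precise Clifford constants $\lambda,\mu$, so the computation of $q(\phi)$ must be carried out carefully, and it is there that $a$ gets matched to the vortex parameter $\tau$. Second, the $\beta$ produced from the curvature equation must be simultaneously compatible with the bidegree constraints imposed by the Dirac equation, so the Dirac and curvature reductions cannot be treated entirely independently; reconciling them is the remaining delicate point.
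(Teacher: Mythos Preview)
Your outline captures the right picture---spinor in an extreme summand, $q(\phi)$ a combination of $i\omega$ and $\omega^2$, the curvature equation split into $i\Lambda^2$ and $\Lambda^4_+$ parts---but the mechanism you propose for closing the argument is different from the paper's and, as written, has a genuine gap.

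The paper does \emph{not} first solve a vortex equation via Bradlow and then recover $\beta$ by Hodge theory. Instead it makes a coupled ansatz from the start: $\phi=e^{3i(1+i)f}\varphi$ with $\varphi$ holomorphic, and $\beta=(\bar\partial f+\partial\bar f)\wedge\omega$, where $f$ is a single unknown complex function. Lemma~\ref{Clifford action lemma 1 8d} shows this pair satisfies the Dirac equation~\eqref{Dirac 8d} identically for every $f$, so the Dirac equation is disposed of before one ever looks at curvature. Then Lemmas~\ref{lemma Kahler 8d 1st} and~\ref{lemma Kahler 8d 2nd} give $d\beta^+=-\tfrac14\Delta(\operatorname{Im}f)\,\omega^2$ and $d^*\beta=2i\partial\bar\partial(\operatorname{Re}f)+\Delta(\operatorname{Re}f)\,\omega$, so the $i\Lambda^2$ and $\Lambda^4_+$ parts of~\eqref{Curvature 8d} become a coupled pair of scalar PDE in $\operatorname{Re}f$, $\operatorname{Im}f$ (plus a conformal factor on $\mathcal{L}$). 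The cohomological hypothesis $c_1(K_X^{-1}\otimes\mathcal{L}^2)=\tfrac{2a}{\pi}[\omega]$ kills the primitive $(1,1)$-part of $F_A$, the two scalar equations force $\operatorname{Re}f-\operatorname{Im}f$ to be constant, and what remains is a single Kazdan--Warner equation solvable precisely when $-2a>0$.

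The gap in your route is the one you flag yourself but do not resolve: you want to solve the curvature equation for $\beta$ abstractly and then check Dirac, but the system is over-determined in $\beta$. Concretely, the $i\Lambda^2$ curvature equation is a full $2$-form equation, not just its $\Lambda$-contraction; its primitive $(1,1)$ and $(0,2)$ parts impose pointwise constraints on $d^*\beta$ that your Hodge-theoretic inversion of $(d^*,d^+)$ does not see. In the paper these parts vanish only because the explicit ansatz forces $d^*\beta$ to be of the form $2i\partial\bar\partial(\cdot)+(\cdot)\omega$ and $F_A^{0,2}=0$. Likewise, your ``co-closedness required of $\beta$'' is not a hypothesis of the problem; $d^*\beta$ is part of the equation, not a gauge choice. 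The explicit ansatz linking $\phi$ and $\beta$ through $f$ is the missing idea: it is what makes the Dirac and curvature reductions simultaneously consistent and collapses everything to Kazdan--Warner rather than to Bradlow.
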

If the first pair of conditions are satisfied, the non-trivial solution established in the theorem satisfies $\phi\in\Omega^0(X,\mathcal L)$, and if the second pair of conditions are satisfied, $\phi\in\Omega^{0,4}(X,\mathcal L)\cong \Omega^0(X,K_X^{-1}\otimes\mathcal{L})$. Depending on the pair of conditions satisfied, the moduli space of solutions obtained in theorem \ref{theorem 8d} can be identified with \(\mathcal{M}_{\text{vortex}}(\mathcal{L})\) (when deg$(K_X\otimes\mathcal{L}^2)<0)$ or \(\mathcal{M}_{\text{vortex}}(K_X\otimes\mathcal{L}_1^{-1})\) (when deg$(K_X\otimes\mathcal{L}^2)>0)$. Hence, vortices on \(X\) with respect to the line bundle \(\mathcal{L}\) or \(K_X \otimes \mathcal{L}^{-1}\) can be used to find solutions to the Seiberg--Witten equations \eqref{Dirac 8d},\eqref{Curvature 8d}. Explicit examples are easy to find. We describe two classes of solutions below. 
\begin{enumerate}
    \item  Take $4$ compact Riemann surfaces $(X_i,\omega_i)_{i=1,2,3,4}$ of the same genus $g>1.$ $\omega_i$ denotes the normalized Kähler form on $X_i$ such that $\int_{X_i}\omega_i=1.$  Define $X:=X_1\times X_2\times X_3\times X_4.$ The Kähler form on $X$ is $\omega:=\sum_{i=1}^{4}\pi_i^*\omega_i$ where $\pi_i$ is the projection of $X$ onto $X_i.$ We have $K_X=\otimes_{i=1}^4\pi_i^*(K_{X_i})$ and $c_1(K_X)=(2g-2)\omega.$\par 
    Take $a=(1-g)\pi$. If we take $\mathcal{L}$ to be the trivial line bundle it satisfies the first two conditions and if we take $\mathcal{L}=K_X,$ it satisfies the alternative two conditions.
    \item We take $X$ to be a hypersurface in $\mathbb{C}\mathbb{P}^4$ of very high degree, let's take a holomorphic section of $\mathcal{O}(d)\rightarrow \mathbb{C}\mathbb{P}^5,$ i.e., a homogeneous polynomial of degree $d$ in $6$ variables ($d$ to be determined later). If we choose this generically, the zero locus is a smooth algebraic variety $X$.\par
The Kahler form $\omega$ on $X$ is given by restricting the Fubini-Study form $\omega_{FS}$ on $X$ and it lies in the cohomology class obtained by restricting $c_1(\mathcal{O}(1))$ on $X$. Meanwhile, by the adjunction formula, $K_X=\mathcal{O}(d-6)|_X$.\par
Take $\mathcal{L}=\mathcal{O}(k)|_X$. For $m>0,$ dim $H^0(\mathbb{C}\mathbb{P}^5,\mathcal{O}(m))>0.$ Restricting these holomorphic sections to $X$ we can find line bundles on $X$ with non-trivial holomorphic sections. For the existence of non-trivial solutions, we need $a,k$ and $d$ to satisfy:
\begin{align*}
    &1. \hspace{1 ex}k>0, \text{ for dim } H^0(X,\mathcal L_0)>0.\\
    &2. \hspace{1 ex}a=\frac{\pi}{2}(6-d+2k)<0. \text{ This ensures that } a<0 \text{ and }c_1(K_X^{-1}\otimes \mathcal{L}^2)=\frac{2a}{\pi}[\omega].
\end{align*}
or
\begin{align*}
    &1. \hspace{1 ex}d-6-k>0, \text{ for dim } H^0(X,K_X\otimes\mathcal{L}^{-1})>0.\\
    &2. \hspace{1 ex}a=-\frac{\pi}{2}(6-d+2k)<0. \text{ This ensures that } a<0 \text{ and }c_1(K_X^{-1}\otimes \mathcal{L}^2)=-\frac{2a}{\pi}[\omega].
\end{align*}
Putting these all together we would need $k$ to be in the following ranges:
\begin{align*}
    &0<k<\frac{d-6}{2}\hspace{5 ex}\text{ or } 
    \hspace{5 ex}\frac{d-6}{2}<k<d-6
\end{align*}
So we get many examples by choosing $d>8$ and choosing $k$ as we like in the above ranges.
\end{enumerate}
In both the examples $X$ has ample canonical line bundle. To produce solutions on Calabi-Yau and Fano four-folds, we look at another ``perturbed" version of the Seiberg--Witten equations parametrized by three real constants $r_0,r_1,a.$ For $\phi\in\Gamma(S_+),A\in\mathcal{A},\beta\in\Omega^3$, the equations are:
\begin{align}
   \big(D_A+(1+i)c(\beta)\big)\phi=0\label{Dirac 8d new}\\
   F_A+2d\beta^++2id^*\beta+4ir_0\omega-r_1\omega^2+a\omega^2=q(\phi)\label{Curvature 8d new}
\end{align}
Using the same reasoning as in the proof of theorem \ref{theorem 8d}, we arrive at the following result.
\begin{theorem}\label{theorem 8d new}
    For a fixed choice of holomorphic structure on $\mathcal{L},$ there exists a non-trivial solution to the Seiberg--Witten equations \eqref{Dirac 8d new},\eqref{Curvature 8d new} under the following conditions:
    \begin{align*}
    &1.\hspace{1 ex} \text{dim }H^0(X,\mathcal{L})>0\\
    &2. \hspace{1 ex} r_0=r_1>a \text{ and }c_1(K_X^{-1}\otimes \mathcal{L}^2)=\frac{2a}{\pi}[\omega]\hspace{1 ex}\\
    &\hspace{16 ex}\text{or}\\
    &1. \hspace{1 ex} \text{dim }H^0(X,K_X\otimes\mathcal{L}^{-1})>0\\
    &2. \hspace{1 ex} -r_0=r_1>a \text{ and }c_1(K_X^{-1}\otimes \mathcal{L}^2)=-\frac{2a}{\pi}[\omega]\hspace{1 ex}
\end{align*}
\end{theorem}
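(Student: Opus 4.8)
The plan is to run the reduction behind Theorem \ref{theorem 8d} essentially unchanged and only track how the two new harmonic terms $4ir_0\omega$ and $-r_1\omega^2$ in \eqref{Curvature 8d new} deform the scalar identities that the proof produces. I treat the first family of conditions, with the spinor in the $\Lambda^0$ summand of $S_+=\Lambda^0(\mathcal{L})\oplus\Lambda^{0,2}(\mathcal{L})\oplus\Lambda^{0,4}(\mathcal{L})$; the second family is obtained by putting the spinor in the top summand $\Lambda^{0,4}(\mathcal{L})\cong\Lambda^0(K_X^{-1}\otimes\mathcal{L})$, which via Serre duality exchanges $\mathcal{L}$ for $K_X\otimes\mathcal{L}^{-1}$ and flips the sign of the $\omega$-coupling, turning $r_0=r_1$ into $-r_0=r_1$. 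Starting from a $\tau$-vortex $(A_{\mathcal{L}},\phi)$ on $\mathcal{L}$, I regard $\phi\in\Omega^0(X,\mathcal{L})$ as a section of $S_+$ supported in $\Lambda^0$, and I take $A\in\mathcal{A}$ to be the connection induced on $L(S)=K_X^{-1}\otimes\mathcal{L}^2$ by $A_{\mathcal{L}}$ together with the fixed Chern connection on $K_X^{-1}$. Since on a K\"ahler manifold $D_A=\sqrt2(\bar\partial_A+\bar\partial_A^*)$, one has $D_A\phi=\sqrt2\bar\partial_A\phi=0$ by \eqref{Vortex 1}; choosing the real $3$-form $\beta$ with vanishing $(3,0)+(0,3)$ part and primitive $(2,1)+(1,2)$ part makes $c(\beta)\phi=0$ on the $\Lambda^0$ vacuum, so the Dirac equation \eqref{Dirac 8d new} reduces to holomorphicity and holds.

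For $\phi\in\Lambda^0$ the endomorphism $E_\phi$ of \eqref{Ephi} is $U(4)$-invariant, so under \eqref{Clifford 8d} one computes $q(\phi)=c_1|\phi|^2\,i\omega+c_2|\phi|^2\,\omega^2$ with $c_1,c_2$ read off from the eigenvalues of $c(i\omega)$ and $c(\omega^2)$ on the three summands; in particular $c_2<0$. Splitting \eqref{Curvature 8d new} into its $i\Lambda^2$- and $\Lambda^4_+$-parts gives
\begin{align*}
F_A+2id^*\beta+4ir_0\omega&=c_1|\phi|^2\,i\omega,\\
2d\beta^++(a-r_1)\omega^2&=c_2|\phi|^2\,\omega^2.
\end{align*}
The primitive part of the first line reads $F_A^{0}=-2i(d^*\beta)^{0}$, which is solvable for $\beta$ exactly when the primitive harmonic part of $[F_A]$ vanishes, i.e.\ when $c_1(L(S))=c_1(K_X^{-1}\otimes\mathcal{L}^2)$ is a real multiple of $[\omega]$; this is the source of the cohomological hypothesis. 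Contracting the first line with $\Lambda$ and integrating (using $\int_X\Lambda d^*\beta\,dV=\langle d^*\beta,\omega\rangle=\langle\beta,d\omega\rangle=0$ and that $\int_X\Lambda F_A\,dV$ is a fixed multiple of $\deg(K_X^{-1}\otimes\mathcal{L}^2)$) expresses $\|\phi\|^2_{L^2}$ linearly in $r_0$ and $\deg(K_X^{-1}\otimes\mathcal{L}^2)$, while pairing the second line with the harmonic form $\omega^2$ (using $\langle d\beta^+,\omega^2\rangle=0$) gives $\|\phi\|^2_{L^2}=\tfrac{a-r_1}{c_2}\,\mathrm{Vol}(X)$. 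Because $c_2<0$, positivity of $\|\phi\|^2$ forces $r_1>a$, which through the integrated vortex identity \eqref{Vortex 2} is precisely the Bradlow condition $\tau>\tfrac{4\pi\deg\mathcal{L}}{\mathrm{Vol}(X)}$; hence the vortex I began with exists as soon as $\dim H^0(X,\mathcal{L})>0$ \cite{Brad}. Equating the two expressions for $\|\phi\|^2$ then determines the multiple, and the choice $r_0=r_1$ cancels the $r_0,r_1$-dependence so that it equals $\tfrac{2a}{\pi}$, recovering the cohomological condition $c_1(K_X^{-1}\otimes\mathcal{L}^2)=\tfrac{2a}{\pi}[\omega]$; setting $r_0=r_1=0$ returns Theorem \ref{theorem 8d} and fixes the normalisations.

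The step I expect to be most delicate is this last bookkeeping of constants and signs: one must verify that the eigenvalue constants $c_1,c_2$ are such that the $r_0$- and $r_1$-contributions to the trace identity and the $\omega^2$-identity cancel under exactly $r_0=r_1$ (and under $-r_0=r_1$ in the $\Lambda^{0,4}$ case), leaving the cohomology-class condition untouched while relaxing only the positivity hypothesis $a<0$ to $r_1>a$. A second point requiring care, inherited from Theorem \ref{theorem 8d}, is checking that the $3$-form $\beta$ furnished by Hodge theory can be taken of the restricted type used above, so that $c(\beta)\phi=0$ and the Dirac equation remains reduced to \eqref{Vortex 1}; this compatibility is also what lets one read off the map to $\mathcal{M}_{\text{vortex}}(\mathcal{L})$ (resp.\ $\mathcal{M}_{\text{vortex}}(K_X\otimes\mathcal{L}^{-1})$).
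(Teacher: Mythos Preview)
Your approach differs substantially from the paper's, and the difference is not cosmetic. The paper does not start from a pre-existing vortex and then solve for a primitive $\beta$ by Hodge theory. Instead it makes the explicit ansatz $\beta=(\bar\partial f+\partial\bar f)\wedge\omega$ for an unknown complex function $f$---note this $\beta$ lies entirely in the \emph{non-primitive} summand $\Lambda^1\wedge\omega$ of $\Omega^3$, the opposite of your choice---together with the twisted spinor $\phi=e^{3i(1+i)f}\varphi$ for a holomorphic section $\varphi$ of $\mathcal L$. The phase factor is essential: $\phi$ is not simply a holomorphic section, and the Dirac equation \eqref{Dirac 8d new} holds because $c(\bar\partial f\wedge\omega)$ acts on the $\Lambda^0$ summand as $-3\sqrt2\,i\,\bar\partial f\wedge(\cdot)$ (Lemma~\ref{Clifford action lemma 1 8d}), which cancels against $D_A(e^{3i(1+i)f}\varphi)$. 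With this ansatz one computes $d\beta^+=-\tfrac14\Delta(\mathrm{Im}\,f)\,\omega^2$ and $d^*\beta=2i\partial\bar\partial(\mathrm{Re}\,f)+\Delta(\mathrm{Re}\,f)\,\omega$; after a conformal change of the hermitian metric on $\mathcal L$ and use of $c_1(K_X^{-1}\otimes\mathcal L^2)=\tfrac{2a}{\pi}[\omega]$, the curvature equation reduces to two scalar Kazdan--Warner equations for $\mathrm{Re}\,f$ and $\mathrm{Im}\,f$. The perturbations $4ir_0\omega$ and $-r_1\omega^2$ simply shift the right-hand constants from $-2a$ to $2(r_0-a)$ and $2(r_1-a)$; compatibility of the two (their difference must integrate to zero on the closed $X$) forces $r_0=r_1$, and positivity in Kazdan--Warner requires $r_1>a$. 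The $\Lambda^{0,4}$ case uses the dual ansatz $\phi=e^{-3i(1+i)\bar f_1}\xi$ with $\bar\partial^*\xi=0$, and the sign flip in $q(\phi)$ turns $r_0=r_1$ into $-r_0=r_1$.

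Your outline has a genuine gap that goes beyond sign bookkeeping. You verify only the \emph{integrated} compatibility of the $\omega$-trace of the $i\Lambda^2$-line and the $\omega^2$-pairing of the $\Lambda^4_+$-line. But both are \emph{pointwise} scalar identities: once $\phi$ is fixed as a vortex spinor, $|\phi|^2$ is a prescribed function, and you must produce a $\beta$ for which $\langle d^*\beta,\omega\rangle$ and the $\omega^2$-coefficient of $d\beta^+$ equal specified functions at every point, while simultaneously satisfying $(d^*\beta)^0=\tfrac{i}{2}F_A^0$ and keeping $\beta$ in the restricted Hodge type needed for $c(\beta)\phi=0$. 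You have not shown this overdetermined first-order system is solvable, and for a generic vortex it need not be. The paper sidesteps this entirely by making $f$ (and hence $\phi$ and $\beta$ jointly) the unknown, so that the curvature equation collapses to a single scalar PDE whose solvability is classical; the correspondence with vortices is then read off a posteriori, not used as an input.
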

If the first pair of conditions are satisfied, the non-trivial solution established in the theorem satisfies $\phi\in\Omega^0(X,\mathcal L)$, and if the second pair of conditions are satisfied, $\phi\in\Omega^{0,4}(X,\mathcal L)\cong \Omega^0(X,K_X^{-1}\otimes\mathcal{L})$. To see an example, we start with a holomorphic section of $\mathcal{O}(d)\rightarrow \mathbb{C}\mathbb{P}^5,$ i.e., a homogeneous polynomial of degree $d$ in $6$ variables ($d$ to be determined later). If we choose this generically, the zero locus is a smooth algebraic variety $X$.\par
The Kahler form $\omega$ on $X$ is given by restricting the Fubini-Study form $\omega_{FS}$ on $X$ and it lies in the cohomology class obtained by restricting $c_1(\mathcal{O}(1))$ on $X$. Meanwhile, by the adjunction formula, $K_X=\mathcal{O}(d-6)|_X$.\par
Take $\mathcal{L}=\mathcal{O}(k)|_X$. For $m>0,$ dim $H^0(\mathbb{C}\mathbb{P}^5,\mathcal{O}(m))>0.$ Restricting these holomorphic sections to $X$ we can find line bundles on $X$ with non-trivial holomorphic sections. We need $k,d,r_0,r_1,a$ to satisfy the following two conditions:
\begin{align*}
    &1. \hspace{1 ex}k>0\\
    &2. \hspace{1 ex}r_0=r_1>a=\frac{\pi}{2}(6-d+2k)
\end{align*}
And for $\mathcal{O}(k)|_X$ to satisfy the alternative two conditions we would need:
\begin{align*}
    &1. \hspace{1 ex}d-6-k>0\\
    &2. \hspace{1 ex}-r_0=r_1>a=-\frac{\pi}{2}(6-d+2k)
\end{align*}
Therefore, for a fixed choice of $k$ and $d$ with $k>0$ (or $d-6-k>0$), there exists an $R>0$, such that for all $r_0=r_1>R$ (or $-r_0=r_1>R$) and $a=\frac{\pi}{2}(2k+6-d)$ (or $a=-\frac{\pi}{2}(2k+6-d)$), we have non-trivial solutions to the Seiberg--Witten equations \eqref{Dirac 8d new},\eqref{Curvature 8d new}. For $d=6,X$ is a Calabi-Yau four-fold and for $d<6,X$ is a Fano four-fold.\par 
In \cite{JP} we constructed a solution to the  $5$-dimensional Seiberg--Witten equations on the total
space of the unit circle bundle of an ample line bundle of a K\"ahler Einstein surface. Another interesting class of five-dimensional manifolds is $\Sigma\times\mathbb{R}^3,$ where $\Sigma$ is a compact Riemann surface without boundary. For the $3$-dimensional Seiberg--Witten equations, it is known that for the product metric on $\Sigma\times S^1,$ the solutions of the Seiberg--Witten equations turn out to be invariant under the $S^1$-action and the moduli space of solutions of the Seiberg--Witten equations can be identified with the vortices over $\Sigma$ \cite{Salamon}. This correspondence motivates the exploration of solutions to the $5$-dimensional Seiberg-Witten equations on $\Sigma\times\mathbb{R}^3$ with the additional condition that these solutions remain invariant under translations along the $\mathbb{R}^3$ directions.\par
In \S\ref{5d} we construct solutions of the $5$-dimensional Seiberg--Witten equations on $\Sigma\times\mathbb{R}^3,$ where $\Sigma$ has genus greater than $1.$ The solutions are invariant in the $\mathbb{R}^3$ directions. Moreover the space of constructed solutions can be identified with the vortices over $\Sigma.$ We explain the result in details below.\par
In dimension $5,$ Clifford multiplication gives an isomorphism:
\begin{align}\label{Clifford 5d}
    c:i\Lambda^2\oplus\Lambda^4\rightarrow i\mathfrak{su}(S)
\end{align}
Given $\phi\in\Gamma(S),$ we write $q(\phi)$ for the corresponding differential form $E_\phi$ under \eqref{Clifford 5d}. $\mathbb{R}^3$ has the standard Euclidean metric, with $x_1,x_2,x_3$ denoting the coordinate directions. Let $\omega$ denotes the K\"ahler form on $\Sigma.$ We define the ``perturbed" Seiberg--Witten equations on $\Sigma\times\mathbb{R}^3$ for $(\phi,A,\beta_j),\phi\in\Gamma(S),A\in\mathcal{A},\beta_j\in\Omega^j:$
\begin{align}
\big(D_A+c((1-i)\beta_3+i*\beta_5)\big)\phi=0\label{Dirac 5D perturbed}\\
F_A-2id^*\beta_3+2d\beta_3+2d^*\beta_5+\eta=q(\phi)\label{Curvature 5D perturbed}\\ 
\nonumber
\eta= a(id x_1\wedge d x_2\pm\omega\wedge d x_1\wedge d x_2),a\in\mathbb{R}     
\end{align}

The perturbation term $\eta$ is \textit{harmonic} and we can think of $\pm a\hspace{0.2 ex}\omega\wedge dx_1\wedge dx_2$ as playing the
role of the cohomology class of $2d\beta_3+2d^*\beta_5.$ $a=0$ gives us back the $5$-dimensional Seiberg--Witten equations defined in \cite{JP}.\par
The gauge group $\mathcal{G}=$ Map$(\Sigma\times\mathbb{R}^3,S^1)$ acts on $A$ and $\phi$ by pull-back and trivially on $\beta_3,\beta_5.$ This action preserves the above equations and the equations form an elliptic system modulo the gauge action \cite{JP}.\par
Let $\mathcal{L}\rightarrow\Sigma$ be a hermitian holomorphic line bundle over $\Sigma$ and say $\underline{\mathbb{C}}^2$ denotes the trivial complex vector bundle of rank $2$ over $\mathbb{R}^3$ (with the standard flat metric). If we take the product metric on $\Sigma\times\mathbb{R}^3,$ the spinor bundle $S\rightarrow\Sigma\times\mathbb{R}^3$ can be taken as:
\begin{align}\label{Spinor in 5d}
    S(\Sigma\times\mathbb{R}^3)=\big(\pi_1^*(\Lambda^0(\Sigma,\mathcal{L}))\otimes\pi_2^*(\underline{\mathbb{C}}^2)\big)\oplus \big(\pi_1^*(\Lambda^{0,1}(\Sigma,\mathcal{L}))\otimes\pi_2^*(\underline{\mathbb{C}}^2)\big)
\end{align}
$\pi_1,\pi_2$ denote the standard projection maps $\pi_1:\Sigma\times\mathbb{R}^3\rightarrow \Sigma$ and $\pi_2:\Sigma\times\mathbb{R}^3\rightarrow\mathbb{R}^3.$ We prove the following in \S\ref{Soln 5d}. 
\begin{theorem}\label{Theorem 5d}
For a fixed choice of holomorphic structure on $\mathcal{L}$, there exists a non-trivial solution to the  Seiberg--Witten equations \eqref{Dirac 5D perturbed} and \eqref{Curvature 5D perturbed} under the following conditions:
\begin{align}
    &1.\hspace{1 ex} \text{dim }H^0(X,\mathcal{L})>0, \text{deg}(K_\Sigma-2\mathcal{L})>0\label{condition 1 5d}\\
    &2. \hspace{1 ex} a=\pm\frac{2\pi}{\text{vol}(\Sigma)}\text{deg}(K_\Sigma-2\mathcal{L})\\
    &\hspace{24 ex}\text{or}\nonumber\\
    &1. \hspace{1 ex} \text{dim }H^0(X,K_X\otimes\mathcal{L}^{-1})>0, \text{deg}(K_\Sigma-2\mathcal{L})<0\label{condition 2 5d}\\
    &2. \hspace{1 ex} a=\pm\frac{2\pi}{\text{vol}(\Sigma)}\text{deg}(K_\Sigma-2\mathcal{L})
\end{align}
The solution is translation invariant in any direction in $\mathbb{R}^3.$
\end{theorem}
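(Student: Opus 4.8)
The plan is to reduce the five-dimensional system on $\Sigma\times\mathbb{R}^3$ to the $\tau$-vortex equations \eqref{Vortex 1}--\eqref{Vortex 3} on $\Sigma$ by imposing invariance under the $\mathbb{R}^3$-translations, in the spirit of the reduction behind Theorem \ref{Theorem 6d sigma}. I treat the first pair of conditions \eqref{condition 1 5d} in detail; the second pair \eqref{condition 2 5d} follows identically after exchanging the roles of $\mathcal{L}$ and $K_\Sigma\otimes\mathcal{L}^{-1}$ by Serre duality, which interchanges the two chiral summands of \eqref{Spinor in 5d}.

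First I would fix the product metric and take all fields to be pulled back from $\Sigma$. Set $A=\pi_1^*A_\Sigma$ for a unitary connection $A_\Sigma$ on $\mathcal{L}\to\Sigma$, and using the parallel constant spinors on flat $\mathbb{R}^3$ that trivialize $\pi_2^*(\underline{\mathbb{C}}^2)$, write $\phi=f\otimes s_0$ with $f\in\Omega^0(\Sigma,\mathcal{L})$ lying in the first summand of \eqref{Spinor in 5d} and $s_0$ a fixed unit vector in $\mathbb{C}^2$. Since $s_0$ is covariantly constant in the $\mathbb{R}^3$-directions, all $x_j$-derivatives drop out and the five-dimensional Dirac operator collapses to the $\Sigma$-Dirac operator on $\Lambda^0\oplus\Lambda^{0,1}$ twisted by $\mathcal{L}$, i.e. to $\bar\partial_{A_\Sigma}$ up to a universal constant. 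The remaining point for \eqref{Dirac 5D perturbed} is to pick the translation-invariant forms $\beta_3,\beta_5$---built out of $\omega\wedge dx_j$ and the $\mathbb{R}^3$-volume directions---so that the extra Clifford term $c\big((1-i)\beta_3+i*\beta_5\big)$ annihilates $f\otimes s_0$, leaving precisely the holomorphicity condition $\bar\partial_{A_\Sigma}f=0$, which is \eqref{Vortex 1}.

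Next I would decompose the curvature equation \eqref{Curvature 5D perturbed} through the Clifford isomorphism \eqref{Clifford 5d} into its $2$-form and $4$-form parts. Projecting the $2$-form part onto the $\omega$-direction on $\Sigma$ should yield exactly $i\Lambda F_{A_\Sigma}+\tfrac12|f|^2=\tfrac12\tau$, i.e. the vortex equation \eqref{Vortex 2}, with the effective constant $\tau$ fed by the harmonic term $ia\,dx_1\wedge dx_2$ in $\eta$; the remaining flat-direction components of the $2$-form part and all of the $4$-form part I would solve for $\beta_3,\beta_5$ by Hodge theory, after matching the harmonic piece $\pm a\,\omega\wedge dx_1\wedge dx_2$ of $\eta$ to the harmonic part of $q(\phi)$ so that $q(\phi)-\eta$ becomes exact-plus-coexact and hence lies in the range of $2d\beta_3+2d^*\beta_5-2id^*\beta_3$. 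Integrating the $\omega$-projected equation over $\Sigma$ kills the exact and coexact contributions and evaluates $\int_\Sigma iF_A$ as $-2\pi\deg(K_\Sigma-2\mathcal{L})$ for the determinant line bundle $L(S)=K_\Sigma^{-1}\otimes\mathcal{L}^2$; this both forces $a=\pm\frac{2\pi}{\mathrm{vol}(\Sigma)}\deg(K_\Sigma-2\mathcal{L})$, which is condition 2, and shows that Bradlow's stability bound $\tau>\frac{4\pi\deg\mathcal{L}}{\mathrm{vol}(\Sigma)}$ is equivalent to $\deg(K_\Sigma-2\mathcal{L})>0$, the sign hypothesis in condition 1.

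With the reduction established, the hypotheses $\dim H^0(\Sigma,\mathcal{L})>0$ and $\deg(K_\Sigma-2\mathcal{L})>0$ produce a non-zero holomorphic section and place $\tau$ above the Bradlow bound, so \cite{Brad} supplies a vortex $(A_\Sigma,f)$; pulling it back and solving for $\beta_3,\beta_5$ gives the desired non-trivial, $\mathbb{R}^3$-invariant solution, and the map sending the vortex $(A_\Sigma,f)$ to its pulled-back solution identifies the constructed moduli with $\mathcal{M}_{\text{vortex}}(\mathcal{L})$. The main obstacle I anticipate is the Clifford-algebraic bookkeeping of the middle two paragraphs: confirming that, for the product spin$^\mathbb{C}$-structure \eqref{Spinor in 5d} and the conventions of \cite{Friedrich}, the perturbation in \eqref{Dirac 5D perturbed} genuinely annihilates $f\otimes s_0$ and that the off-$\omega$ and flat-direction components of $q(f\otimes s_0)$ are matched exactly by $2d\beta_3+2d^*\beta_5+\eta$ for an explicit translation-invariant choice of $\beta_3,\beta_5$. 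Pinning down the constants and the $(1-i)$, $i$ coefficients---so that nothing survives beyond the two genuine vortex equations and the single harmonic constraint fixing $a$---is the delicate computation on which the argument rests.
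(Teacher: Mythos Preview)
Your proposal has a genuine gap in the second paragraph: the two jobs you assign to $\beta_3,\beta_5$ are incompatible. Any translation-invariant $3$-form that contributes non-trivially to the curvature equation must contain a piece $\alpha\wedge dx_i\wedge dx_j$ with $\alpha$ a non-constant real $1$-form on $\Sigma$ (the other building blocks $\omega\wedge dx_j$ and $dx_1\wedge dx_2\wedge dx_3$ are closed and coclosed, hence invisible in $2d\beta_3-2id^*\beta_3$). But for such a $\beta_3$ the Clifford action on $f\otimes s_0$ with $f\in\Omega^0(\Sigma,\mathcal{L})$ is, up to a nonzero scalar, $\sqrt{2}\,(\alpha^{0,1}\wedge f)\otimes s_0'$, which lies in $\Omega^{0,1}(\Sigma,\mathcal{L})\otimes\langle s_0'\rangle$ and is never zero for $\alpha\neq 0$. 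Since $*\beta_5$ is a function and acts in $\Omega^0(\Sigma,\mathcal{L})\otimes\langle s_0\rangle$, it cannot cancel this term. So if you insist on $D_A(f\otimes s_0)=0$ reducing to $\bar\partial_{A_\Sigma}f=0$, you are forced to $\beta_3=\beta_5=0$ (modulo harmonic pieces), and then the $dx_1\wedge dx_2$ and $\omega\wedge dx_1\wedge dx_2$ components of \eqref{Curvature 5D perturbed} read $|f|^2=\mathrm{const}$, which fails for any non-trivial $\mathcal{L}$.

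The paper does \emph{not} decouple the two equations. It takes the spinor to be $\pi_1^*(e^{i(1-i)f_1}\varphi)\otimes\pi_2^*e_1$ with $\varphi$ holomorphic and $f_1\in C^\infty(\Sigma,\mathbb{C})$ unknown, and sets $\beta_3=\pi_1^*(\bar\partial f_1+\partial\bar f_1)\wedge\pi_2^*(dx_1\wedge dx_2)$, $\beta_5=0$. Then the Clifford term in \eqref{Dirac 5D perturbed} exactly cancels the $\bar\partial$ of the exponential prefactor, so the Dirac equation holds identically, while the three components of \eqref{Curvature 5D perturbed} become a coupled system for $f_1$ and a conformal factor on the metric of $\mathcal{L}$. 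Subtracting two of them forces $\mathrm{Re}f_1+\mathrm{Im}f_1=\mathrm{const}$, after which everything collapses to a single Kazdan--Warner equation whose solvability is precisely the condition $a=\pm\frac{2\pi}{\mathrm{vol}(\Sigma)}\deg(K_\Sigma-2\mathcal{L})$ with the appropriate sign. The vortex interpretation is recovered only afterwards, by tracking how the construction depends on $\varphi$; it is not the mechanism of the proof.
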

\begin{remark}
    For $a\neq 0$, both the conditions mentioned in theorem \ref{Theorem 5d} imply that genus$(\Sigma)>1.$ Condition \ref{condition 1 5d} gives us: $0<\text{ deg}(\mathcal{L})<$ genus$(\Sigma)-1.$ Thus genus$(\Sigma)>1$. From condition \ref{condition 2 5d} we get genus$(\Sigma)-1<$ deg$(\mathcal{L})<2$ genus$(\Sigma)-2$, so genus$(\Sigma)>1$. Hence, it is a necessary condition on $\Sigma$ for our construction to work.
\end{remark}
Relationship of these solutions with vortices is case specific. For deg$(K_\Sigma-2\mathcal{L})>0$, the space of constructed solutions modulo gauge can be identified as $\mathcal{M}_{\text{vortex}}(\mathcal{L})$ and for deg$(K_\Sigma-2\mathcal{L})<0$, the space of constructed solutions modulo gauge can be identified as $\mathcal{M}_{\text{vortex}}(K_\Sigma-\mathcal{L}).$ Explicit examples are again not difficult to find. One can take any compact Riemann surface $\Sigma$ with genus $\textit{g}>1.$ Now if $\mathcal{L}$ is the trivial line bundle, then for $a=\pm \frac{4\pi(\textit{g}-1)}{\text{vol}(\Sigma)},$ the first two conditions in theorem \ref{Theorem 5d} are satisfied. If we take $\mathcal{L}=K_\Sigma$ and $a=\pm\frac{4\pi(\textit{g}-1)}{\text{vol}(\Sigma)},$ it satisfies the two alternative conditions in theorem \ref{Theorem 5d}.\par
\vspace{1 ex}
\textbf{Acknowledgments.}
This article comes from my Ph.D. thesis. I sincerely thank my thesis supervisor Joel Fine, for suggesting this project, providing continuous support and guidance, and offering valuable feedback on an initial draft of this article. During the preparation of this article, I was supported by FNRS grants FRIA B1-$40004195$ and FRIA B2-$40014665.$
\section{Solutions to the $6$-dimensional Seiberg--Witten equations}\label{6d}
This section is dedicated to constructing solutions of the $6$-dimensional Seiberg--Witten equations on K\"ahler manifolds. We construct solutions on closed K\"ahler $3$-folds and on $\Sigma\times\mathbb{C}^2 \hspace{.4 ex}(\Sigma$ being a closed Riemann surface).  
\subsection{Spin$^\mathbb{C}$-bundle, Dirac operator and Clifford multiplication}
Let $(X,\omega)$ be a K\"ahler $3$-fold. The K\"ahler form $\omega$ together with a Riemannian metric determines a unique compatible almost complex structure $J.$ For two holomorphic hermitian line bundles $\mathcal{L}_0,\mathcal{L}_1\rightarrow X,$ we have positive and negative spin bundles:
\begin{align*}
    S_+=(\Lambda^0\oplus\Lambda^{0,2})\otimes\mathcal{L}_0\\
    \tilde{S}_-=(\Lambda^{0,1}\oplus\Lambda^{0,3})\otimes\mathcal{L}_1
\end{align*}
The associated line bundles are
\begin{align*}
    L(S)=K_X^{-1}\otimes\mathcal{L}_0^2\\
    L(\tilde{S})=K_X^{-1}\otimes\mathcal{L}_1^2
\end{align*}
A unitary connection $A$ on $L(S)$ is equivalent to a unitary connection $A_0$ on $\mathcal{L}_0,$ the equivalence being $A=-A_{K_X}+2A_0$ (with abuse of notation) where $A_{K_X}$ is the Chern connection on $K_X$ (the metric on $K_X$ is the one induced from the K\"ahler metric). Similarly, a unitary connection $B_0$ on $\mathcal{L}_1$ gives us a unitary connection $B$ on $L(\tilde{S})$ via $B=-A_{K_X}+2B_0$ (with abuse of notation again).\par 
The Dirac operators associated to the connection $A$ on $\mathcal{L}$ and the connection $B$ on $\tilde{\mathcal{L}}$ respectively on $\Gamma(S_+)$ and $\Gamma(\tilde{S}_-)$ are \cite{Morgan}
\begin{align*}
    D_A=\sqrt{2}(\bar\partial_{A_0}+{\bar\partial}_{A_0}^*):\Omega^0(X,\mathcal{L}_0)\oplus \Omega^{0,2}(X,\mathcal{L}_0)\rightarrow \Omega^{0,1}(X,\mathcal{L}_0)\oplus \Omega^{0,3}(X,\mathcal{L}_0)\\
    D_B=\sqrt{2}(\bar\partial_{B_0}+{\bar\partial}_{B_0}^*):\Omega^{0,1}(X,\mathcal{L}_1)\oplus \Omega^{0,3}(X,\mathcal{L}_1)\rightarrow \Omega^{0}(X,\mathcal{L}_1)\oplus \Omega^{0,2}(X,\mathcal{L}_1).
\end{align*}
The operators are obtained by coupling $\sqrt{2}(\bar\partial+{\bar\partial}^*)$ with the covariant derivatives $\nabla_{A_0}$ on $\mathcal{L}_0$ and $\nabla_{A_1}$ on $\mathcal{L}_1.$\par
The almost complex structure 
$J$ induces a splitting of complexified $k$-forms: $\Lambda^k(X)\otimes\mathbb{C}=\sum_{p+q=k}\Lambda^{p,q}(X).$ Define $\Omega^3_{\pm}:=\Gamma(\Lambda^3_{\pm}).$ In the following lemma we investigate the relationship between this decomposition and the Lefschetz decomposition of a $3$-form. This will be useful in understanding the Clifford action of a real-valued $3$-form on positive and negative spinors.
\begin{lemma}\label{lemma Lefschetz}
    On a Kähler $3$-fold $(X,\omega),$ any real valued $3$-form $\beta$ can be written as
    \begin{align*}
        \beta=\beta^{3,0}_-+(\eta\wedge\omega)_++\gamma_-+\overline{\beta^{3,0}}_++(\bar\eta\wedge\omega)_-+\bar{\gamma}_+
    \end{align*}
    Where $\beta^{3,0}\in\Omega^{3,0}(X,\mathbb{C}),\eta\in\Omega^{0,1}(X,\mathbb{C}),\gamma\in$\text{ Ker}$\big(\wedge\omega:\Omega^{1,2}(X,\mathbb{C})\rightarrow\Omega^{2,3}(X,\mathbb{C})\big)$. The subscript $'+'$ or $'-'$ respectively say if the form lies in $\Omega^3_+$ or $\Omega^3_-$. 
\end{lemma}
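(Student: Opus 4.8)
The plan is to combine three standard structures on the Kähler $3$-fold — the Hodge $(p,q)$-decomposition, the Lefschetz (primitive) decomposition, and Weil's formula for the Hodge star — and to read off from the last of these which eigenspace of $*$ each component lands in. Recall first that on a real oriented $6$-manifold the Hodge star obeys $*^2=(-1)^{3(6-3)}=-1$ on $3$-forms, so on the complexification $\Lambda^3\otimes\mathbb{C}$ it has eigenvalues $\pm i$; the operative meaning of the decomposition $\Lambda^3=\Lambda^3_+\oplus\Lambda^3_-$ is that $\Lambda^3_+$ and $\Lambda^3_-$ are the $+i$ and $-i$ eigenspaces respectively. Complex conjugation interchanges these two eigenspaces, and this is precisely what will pair the six terms into conjugate halves and keep $\beta$ real.

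First I would decompose $\beta$ by type. Since $\Lambda^3\otimes\mathbb{C}=\Lambda^{3,0}\oplus\Lambda^{2,1}\oplus\Lambda^{1,2}\oplus\Lambda^{0,3}$ and $\beta$ is real, its components satisfy $\beta^{0,3}=\overline{\beta^{3,0}}$ and $\beta^{2,1}=\overline{\beta^{1,2}}$, so it suffices to treat the $(3,0)$ and $(1,2)$ parts and then conjugate. Next I would apply the Lefschetz decomposition $\Lambda^3=P^3\oplus\omega\wedge P^1$ (valid for $n=3$, with $P^1=\Lambda^1$ since every $1$-form is primitive). On types this gives: every $(3,0)$-form is primitive, since there is no $(2,-1)$-form for $\wedge\omega$'s adjoint to detect, so $\beta^{3,0}$ is primitive; and $\Lambda^{1,2}=(\Lambda^{1,2}\cap P^3)\oplus\omega\wedge\Lambda^{0,1}$, so $\beta^{1,2}=\gamma+\omega\wedge\eta$ with $\gamma$ a primitive $(1,2)$-form, i.e.\ $\gamma\in\mathrm{Ker}(\wedge\omega\colon\Omega^{1,2}\to\Omega^{2,3})$, and $\eta\in\Omega^{0,1}$. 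Conjugating yields $\beta^{2,1}=\bar\gamma+\omega\wedge\bar\eta$.

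The heart of the argument is to evaluate $*$ on each primitive–Lefschetz piece using Weil's formula: for a primitive $(p,q)$-form $\alpha$ with $k=p+q$,
\[
*\,\frac{L^{j}}{j!}\,\alpha=(-1)^{k(k+1)/2}\,i^{\,p-q}\,\frac{L^{\,n-k-j}}{(n-k-j)!}\,\alpha,
\qquad L=\omega\wedge(\cdot).
\]
With $n=3$ this gives $*\beta^{3,0}=-i\,\beta^{3,0}$ (here $j=0$, $k=3$, $p-q=3$) and $*\gamma=-i\,\gamma$ (here $j=0$, $k=3$, $p-q=-1$), so $\beta^{3,0}$ and $\gamma$ lie in $\Lambda^3_-$; while for a non-primitive piece, taking $\alpha=\bar\eta$ primitive of type $(1,0)$ and $j=1$ gives $*(\omega\wedge\bar\eta)=(-1)^1 i^{1}(\omega\wedge\bar\eta)=-i\,(\omega\wedge\bar\eta)$, so $\omega\wedge\bar\eta\in\Lambda^3_-$. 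The three conjugate terms $\overline{\beta^{3,0}}$, $\bar\gamma$ and $\omega\wedge\eta$ then lie in $\Lambda^3_+$, either because conjugation interchanges the $\pm i$ eigenspaces or by re-running Weil's formula on the conjugate types. Attaching these subscripts to the summands reproduces exactly the stated identity.

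The main obstacle is purely bookkeeping but genuinely error-prone: applying Weil's formula with the correct sign $(-1)^{k(k+1)/2}$ and phase $i^{\,p-q}$ on the non-primitive $\omega\wedge\eta$ and $\omega\wedge\bar\eta$ terms, which require the general $j=1$ case rather than the primitive case $j=0$. I would verify these by an independent local computation in a unitary coframe at a point — writing $\omega=\tfrac{i}{2}\sum_a dz^a\wedge d\bar z^a$ and evaluating $*$ directly on monomials such as $\omega\wedge d\bar z^a$ — to confirm the eigenvalues before committing to the signs.
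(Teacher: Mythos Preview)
Your proof is correct and arrives at the same eigenspace assignments as the paper, but by a different route. The paper foregoes Weil's formula entirely and instead works in local holomorphic coordinates $\{z_k=x_k+iy_k\}$ at a point where the metric is standard to second order: it computes $*$ directly on the spanning monomials $d\bar z_j\wedge dx_k\wedge dy_k$, $dz_j\wedge d\bar z_k\wedge d\bar z_l$, $d\bar z_j\wedge(dx_k\wedge dy_k-dx_l\wedge dy_l)$, and $dz_1\wedge dz_2\wedge dz_3$, reading off the $\pm i$ eigenvalues one by one. Your approach is more conceptual and coordinate-free, trading explicit verification for the correctness of the sign $(-1)^{k(k+1)/2}$ and phase $i^{p-q}$ in Weil's identity --- exactly the bookkeeping you flag as error-prone. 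The paper's approach is more elementary and self-contained but longer; yours is quicker once Weil's formula is trusted. Amusingly, the local check you propose as a safeguard for the $j=1$ case is precisely the paper's entire argument.
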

\begin{proof}
    Lefschetz decomposition says that $\beta$ can be written as 
    \begin{align*}
        \beta=\beta^{3,0}_-+(\eta\wedge\omega)_++\gamma_-+\overline{\beta^{3,0}}_++(\bar\eta\wedge\omega)_-+\bar{\gamma}_+
    \end{align*}
    for some $\beta^{3,0}\in\Omega^{3,0},\eta\in\Omega^{0,1}$ and $\gamma\in$ Ker$(\wedge\omega:\Omega^{1,2}\rightarrow\Omega^{2,3}).$
    We take local holomorphic coordinates $\{z_k=x_k+iy_k\}_{k=1,2,3}$ centered at a point $x\in X$ so that the Kähler metric is standard to second order at the point. We start with $(1,2)$ forms. In local coordinates we see that at $x,$ $\{d z_k\wedge d\bar{z}_j\wedge d\bar{z}_l\}_{\{j\neq l\}}$ span the $(1,2)$ forms.\par 
We take $j,k,l\in\{1,2,3\}$ and $j\neq k\neq l$ for the rest of the proof. Observe that
\begin{align*}
    *(d\bar{z}_j\wedge d x_k\wedge d y_k)=i (d\bar{z}_j\wedge d x_l\wedge d y_l) 
\end{align*}
Hence
\begin{align*}
    *(d\bar z_j\wedge\omega)=*(d\bar z_j\wedge dx_k\wedge dy_k+dx_l\wedge dy_l)=i(d\bar z_j\wedge\omega)
\end{align*}
So, $\eta\wedge\omega\in\Omega^3_+.$ Notice that at $x,$ $d\bar z_j\wedge (dx_k\wedge dy_k-dx_l\wedge dy_l),d z_j\wedge d\bar{z}_k\wedge d\bar{z}_l$ span Ker$(\wedge\omega:\Omega^{1,2}\rightarrow\Omega^{2,3}).$ A simple calculation shows
\begin{align*}
    *(d z_j\wedge d\bar{z}_k\wedge d\bar{z}_l)=-i(d z_j\wedge d\bar{z}_k\wedge d\bar{z}_l)
\end{align*} 
Since, $*(d\bar{z}_j\wedge d x_k\wedge d y_k)=i (d\bar{z}_j\wedge d x_l\wedge d y_l),$ we have
\begin{align*}
    *(dx_k\wedge dy_k-dx_l\wedge dy_l)=-i(dx_k\wedge dy_k-dx_l\wedge dy_l)
\end{align*}
Thereafter, $\gamma\in\Omega^3_-.$
Similar observations for $(2,1)$ forms give us: $\bar\eta\wedge\omega\in\Omega^3_+$ and $\bar\gamma\in\Omega^3_-.$ Finally for $(3,0)$ and $(0,3)$ forms we notice
\begin{align*}
    *(d\bar z_j\wedge d\bar z_k\wedge d\bar z_l)=i(d\bar z_j\wedge d\bar z_k\wedge d\bar z_l)\\
    *(dz_j\wedge dz_k\wedge dz_l)=-i(dz_j\wedge dz_k\wedge dz_l)
\end{align*}
Hence the lemma follows.
\end{proof}
Next, we provide explicit formulae for the Clifford action of forms of the type \(\beta = (\eta + \bar{\eta}) \wedge \omega\), where \(\eta \in \Omega^{0,1}\). These formulae will be instrumental in constructing solutions to the $6$-dimensional Seiberg--Witten equations.\par 
Let $\alpha\in\Omega^1\otimes\mathbb{C}$ be a complexified one-form and $\xi\in\Omega^{0,k}$ be a spinor. Clifford action of $\alpha=(\alpha^{0,1}+\alpha^{1,0})$ on $\xi$ is given by the following formula \cite{Morgan} :
\begin{align}\label{Clifford one form}
c(\alpha)\xi=\sqrt{2}(\alpha^{0,1}\wedge\xi-\overline{\alpha^{1,0}}\angle\xi) 
\end{align}
$\overline{\alpha^{1,0}}\angle\xi\in\Omega^{0,k-1}$ is the contraction of $\alpha$ and $\xi$ defined by $\overline{\alpha^{1,0}}\angle\xi:=\iota_{(\overline{\alpha^{1,0}})^{\#}}(\xi)$, where $(\overline{\alpha^{1,0}})^{\#}$ is the vector metric dual to $\overline{\alpha^{1,0}}$. At a point $p\in X,$ if $\xi=e_1\wedge\cdots\wedge e_k$
\begin{align*}
     \overline{\alpha^{1,0}}\angle(e_1\wedge\cdots\wedge e_k):=\sum_{i=1}^k (-1)^{i-1}\langle e_i,\overline{\alpha^{1,0}}\rangle e_1\wedge\cdots\wedge\cdots\wedge\hat{e_i}\wedge\cdots\wedge e_k
\end{align*}
\begin{remark}\label{remark Clifford action}
    Notice $c(\alpha)$ takes positive spinors: $\oplus_p\Omega^{0,2p}$ to negative spinors: $\oplus_p\Omega^{0,2p+1}$ and vice versa. Moreover it says that for a $(p,q)$ form $\kappa, c(\kappa)$ takes a spinor in $\Omega^{0,k}$ to $\Omega^{0,k+q-p}.$ Hence the action is trivial if $k+q-p<0.$
\end{remark}
Before we proceed, we clarify some notations which will be used in the next part of the article. For a form in $\Omega^{p,q}(X,\mathcal{L}_0)$ we define $*:\Omega^{p,q}(X,\mathcal{L}_0)\rightarrow \Omega^{n-q,n-p}(X,\mathcal{L}_0),$ which is defined as the following 
\begin{align*}
    *(\phi\otimes s):=*(\phi)\otimes s \end{align*}
$\phi\in\Omega^{p,q}(X,\mathbb{C})$ and $s$ is a non-vanishing section of $\mathcal{L}_0$ defined locally, $*$ is complex-linear on $\Omega^{p,q}(X,\mathbb{C}).$\par
Notice that this is different from the usual $\bar*_h$ operator, which is defined using the hermitian metric (say $h$) on $\mathcal{L}_0.$ First we interpret $h$ as a $\mathbb{C}$-antilinear isomorphism $h:\mathcal{L}_0\cong\mathcal{L}_0^*.$ Then $\bar *_h$ is defined as $\bar*_h:\Omega^{p,q}(X,\mathcal{L}_0)\rightarrow \Omega^{n-p,n-q}(X,\mathcal{L}_0^*)$
\begin{align*}
    \bar*_h(\phi\otimes s):=\bar*(\phi)\otimes h(s)=\overline{*(\phi)}\otimes h(s)=*(\bar\phi)\otimes h(s)
\end{align*}
\begin{lemma}\label{Clifford action formulae} Let's take $\eta\in\Omega^{0,1},\gamma\in\text{Ker}(\wedge\omega:\Omega^{1,2}\rightarrow\Omega^{2,3}),\varphi\in\Omega^0(X,\mathcal{L}_0)$ and $\xi\in\Omega^{0,3}(X,\mathcal{L}_1).$ Then we have
\begin{align*}
    &1.\hspace{1 ex}c(\eta\wedge\omega)\varphi=-2\sqrt{2}i\eta\wedge\varphi\\
    &2.\hspace{1 ex} c(\bar\eta\wedge\omega)\xi=2\sqrt{2}*(\bar\eta\wedge\xi)\\
    &3. \hspace{1 ex} c(\bar\gamma)\varphi=0\\
    &4. \hspace{1 ex} c(\gamma)\xi=0
\end{align*}
\end{lemma}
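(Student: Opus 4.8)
The plan is to reduce all four identities to a pointwise algebraic computation, exploiting that both $c$ and $*$ are tensorial (order zero) operators. Fix $x\in X$ and holomorphic coordinates $\{z_k=x_k+iy_k\}$ in which the metric is standard at $x$, so that $\omega=\tfrac{i}{2}\sum_{k=1}^{3}dz_k\wedge d\bar z_k$, and write $e^k,f^k$ for the real orthonormal coframe with $dz_k=e^k+if^k$. The line-bundle factors are inert: $c$ acts only on the form part of a spinor, and $*$ is defined to be complex-linear on forms and to ignore the local frame, so I prove each identity for the underlying forms and tensor with a section. By $\mathbb{C}$-linearity of both sides and symmetry under permuting $\{1,2,3\}$, I may take $\eta=d\bar z_1$, $\varphi$ a local frame, and $\xi=d\bar z_1\wedge d\bar z_2\wedge d\bar z_3$ (times a frame). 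Identities $3$ and $4$ then need no computation: by Remark \ref{remark Clifford action}, $c(\gamma)$ with $\gamma\in\Omega^{1,2}$ raises antiholomorphic degree by $q-p=1$, so $c(\gamma)\xi\in\Omega^{0,4}=0$, while $c(\bar\gamma)$ with $\bar\gamma\in\Omega^{2,1}$ lowers it, so $c(\bar\gamma)\varphi\in\Omega^{0,-1}=0$.

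For identities $1$ and $2$ I would introduce the creation and annihilation operators $E_k=d\bar z_k\wedge(\cdot)$ and $A_k=d\bar z_k\angle(\cdot)$, so that \eqref{Clifford one form} reads $c(d\bar z_k)=\sqrt2\,E_k$ and $c(dz_k)=-\sqrt2\,A_k$. The Friedrich normalization $c(v)^2=-|v|^2$ forces $\{E_k,A_k\}=2$ and $E_k^2=A_k^2=0$. A short computation gives $dz_k\wedge d\bar z_k=-2i\,e^k\wedge f^k$, and since $e^k,f^k$ are orthonormal one finds $c(e^k)c(f^k)=i\,(E_kA_k-\mathrm{Id})$. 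The crucial point is that $A_k$ annihilates the ``vacuum'' $\varphi$ whereas $E_k$ annihilates the ``top'' spinor $\xi$; hence $c(e^k)c(f^k)$ acts as the scalar $-i$ on $\varphi$ and as $+i$ on $\xi$.

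Expanding, $\eta\wedge\omega=\sum_{k\neq1}d\bar z_1\wedge e^k\wedge f^k$ is a sum of wedges of mutually orthogonal $1$-forms, so $c(\eta\wedge\omega)=\sum_{k\neq1}c(d\bar z_1)\,c(e^k)c(f^k)$. Applying this to $\varphi$ and using the scalar $-i$ collapses the two terms to $-2i\,c(d\bar z_1)\varphi=-2\sqrt2\,i\,\eta\wedge\varphi$, which is identity $1$. For identity $2$ the same expansion of $\bar\eta\wedge\omega=dz_1\wedge\omega$ together with the scalar $+i$ on $\xi$ yields $c(\bar\eta\wedge\omega)\xi=2i\,c(dz_1)\xi=-2\sqrt2\,i\,(d\bar z_1\angle\xi)$. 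It then remains to identify this with $2\sqrt2\,*(\bar\eta\wedge\xi)$, i.e.\ to establish the pointwise Hodge-star identity $*(dz_1\wedge\xi)=-i\,(d\bar z_1\angle\xi)$ on the top $(0,3)$-form.

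I expect this last Hodge-star identity to be the only genuinely delicate step: it requires writing $dz_1\wedge d\bar z_1\wedge d\bar z_2\wedge d\bar z_3$ in the real orthonormal frame and computing $*$ of each of the four resulting $4$-forms, with careful attention to orientation and sign conventions. Everything else is routine manipulation of the relation $\{E_k,A_k\}=2$. A direct calculation confirms $*(dz_1\wedge d\bar z_1\wedge d\bar z_2\wedge d\bar z_3)=-2i\,d\bar z_2\wedge d\bar z_3=-i\,(d\bar z_1\angle\xi)$, which closes identity $2$.
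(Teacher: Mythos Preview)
Your proof is correct and follows essentially the same route as the paper: a pointwise computation in adapted holomorphic coordinates, reducing identities 3 and 4 to Remark~\ref{remark Clifford action} and computing identities 1 and 2 by factoring $c(d\bar z_1\wedge e^k\wedge f^k)$ as a product of one-form Clifford actions. The only cosmetic difference is that you package the step $c(e^k)c(f^k)=i(E_kA_k-\mathrm{Id})$ as a general operator identity in creation/annihilation language, whereas the paper just tracks the action on $\varphi$ and $\xi$ directly; the Hodge-star identity you flag as delicate is exactly the computation $*(dz_j\wedge d\bar z_j\wedge d\bar z_k\wedge d\bar z_l)=-2i\,d\bar z_k\wedge d\bar z_l$ that the paper also singles out.
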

\begin{proof}
    We take local holomorphic coordinates $\{z_k=x_k+iy_k\}_{k=1,2,3}$ centered at a point $x\in X$ so that the Kähler metric is standard to second order at the point. For rest of the proof we take $j,k,l\in\{1,2,3\},j\neq k\neq l.$ All the calculations done in this proof are at this point $x$ with respect to the chosen local coordinates.\par
    Using formula \ref{Clifford one form} we compute
    \begin{align*}
        c(d\bar z_j\wedge dx_k\wedge dy_k)\varphi&=c(d\bar z_j)\circ c( dx_k)\circ c( dy_k)\varphi\\
        &=c(d\bar z_j)\circ c( dx_k)(\frac{i}{\sqrt{2}}d\bar z_k\wedge\varphi)\\
        &=c(d\bar z_j)(-\frac{i}{2}|d\bar z_k|^2\varphi)\\
        &=-\sqrt{2}i d\bar z_j\wedge\varphi
    \end{align*}
    \begin{align*}
     \text{Hence, }c(d\bar z_j\wedge\omega)\varphi&=c(d\bar z_j\wedge dx_k\wedge dy_k+ d\bar z_j\wedge dx_l\wedge dy_l)\varphi=-2\sqrt{2}i d\bar z_j\wedge\varphi
    \end{align*}
    Thereafter, the first formula of the lemma follows since Clifford action is linear in $\eta.$ For the proof of the second formula, enough to take $\bar\eta=dz_j,\xi=d\bar z_j\wedge d\bar z_k\wedge d\bar z_l$ (at $x$) since Clifford action is linear in both $\bar\eta$ and $\xi.$ We compute
    \begin{align*}
        c(dz_j\wedge dx_k\wedge dy_k)(d\bar z_j\wedge d\bar z_k\wedge d\bar z_l)&=c(dz_j)\circ c(dx_k)\circ c( dy_k)(d\bar z_j\wedge d\bar z_k\wedge d\bar z_l)\\
        &=c(dz_j)\circ (id\bar z_j\wedge d\bar z_k\wedge d\bar z_l)\\
        &=-2\sqrt{2}id\bar z_k\wedge d\bar z_l
    \end{align*}
    Also observe that
    \begin{align*}
        *(dz_j\wedge d\bar z_j\wedge d\bar z_k\wedge d\bar z_l)&=*(-2idx_j\wedge dy_j\wedge d\bar z_k\wedge d\bar z_l)\\
        &=-2id\bar z_k\wedge d\bar z_l
    \end{align*}
    So we get 
    \begin{align*}
        c(dz_j\wedge\omega)(d\bar z_j\wedge d\bar z_k\wedge d\bar z_l)&=c(dz_j\wedge dx_k\wedge dy_k+dz_j\wedge dx_l\wedge dy_l)(d\bar z_j\wedge d\bar z_k\wedge d\bar z_l)\\
        &=-4\sqrt{2}i (d\bar z_k\wedge d\bar z_l)\\
        &=2\sqrt{2}*(dz_j\wedge d\bar z_j\wedge d\bar z_k\wedge d\bar z_l) 
    \end{align*}
    Therefore, the second formula follows. The last two formulae follow from remark \ref{remark Clifford action}.
\end{proof}
We prove another lemma concerning the Clifford action of the K\"ahler form $\omega.$ This will be instrumental in solving the two curvature equations \eqref{Curvature 6d +}, \eqref{Curvature 6d -}. 
\begin{lemma}\label{Clifford omega}
    We have the positive and negative spin bundles as the summands $S_+=\mathcal{L}_0\oplus(\Lambda^{0,2}\otimes\mathcal{L}_0)$ and $\tilde{S}_-=(\Lambda^{0,1}\otimes\mathcal{L}_1)\oplus (K_X^{-1}\otimes\mathcal{L}_1).$ $c(\omega)$ preserves the splitting of $S_+$ and $\tilde{S}_-$ and acts on the summands as follows:
    \begin{align*}
        c(\omega)=\begin{bmatrix}
            -3i\text{Id}&0\\
            0&i\text{Id}
        \end{bmatrix} \text{ on } S_+\text{ and }
        c(\omega)=\begin{bmatrix}
            -i\text{Id}&0\\
            0&3i\text{Id}
        \end{bmatrix} \text{ on } \tilde{S}_-
    \end{align*}
\end{lemma}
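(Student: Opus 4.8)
The plan is to reduce the statement to a pointwise computation on the untwisted bundles $\Lambda^{0,q}$. Since Clifford multiplication acts only on the form factor and ignores the line-bundle factors $\mathcal{L}_0,\mathcal{L}_1$, and since $c(\omega)$ is a zeroth-order (tensorial) endomorphism, it suffices to determine the scalar by which $c(\omega)$ acts on each $\Lambda^{0,q}$, $q=0,1,2,3$. I would fix a point $x\in X$ and choose local holomorphic coordinates $\{z_k=x_k+iy_k\}_{k=1,2,3}$ in which the Kähler metric is standard to second order, exactly as in the proof of Lemma \ref{Clifford action formulae}. There $\omega=\sum_{k=1}^3 dx_k\wedge dy_k$ with $dx_k,dy_k$ orthonormal, and because $dx_k\perp dy_k$ one has $c(dx_k\wedge dy_k)=c(dx_k)\,c(dy_k)$; hence
\begin{align*}
c(\omega)=\sum_{k=1}^3 c(dx_k)\,c(dy_k).
\end{align*}

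The core step is to evaluate each operator $c(dx_k)c(dy_k)$ on a basis $(0,q)$-form. From formula \eqref{Clifford one form}, $c(d\bar z_k)=\sqrt{2}\,(d\bar z_k\wedge\,\cdot\,)$ is a creation operator and $c(dz_k)=-\sqrt{2}\,(d\bar z_k\angle\,\cdot\,)$ is an annihilation operator; rewriting $c(dx_k)=\tfrac12\big(c(dz_k)+c(d\bar z_k)\big)$ and $c(dy_k)=\tfrac{1}{2i}\big(c(dz_k)-c(d\bar z_k)\big)$ (equivalently, running the same local computation as in Lemma \ref{Clifford action formulae}) shows that $c(dx_k)c(dy_k)$ acts as $+i$ on basis forms that contain the factor $d\bar z_k$ and as $-i$ on those that do not. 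The only input needed is the anticommutation relation $(d\bar z_k\wedge)(d\bar z_k\angle)+(d\bar z_k\angle)(d\bar z_k\wedge)=|d\bar z_k|^2=2$ together with $(d\bar z_k\wedge)^2=(d\bar z_k\angle)^2=0$. Summing over $k$: a basis form in $\Lambda^{0,q}$ contains exactly $q$ of the factors $d\bar z_1,d\bar z_2,d\bar z_3$, so $q$ of the three summands contribute $+i$ and the remaining $3-q$ contribute $-i$, giving
\begin{align*}
c(\omega)\big|_{\Lambda^{0,q}}=i\big(q-(3-q)\big)\,\mathrm{Id}=i(2q-3)\,\mathrm{Id}.
\end{align*}

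Finally I would read off the four cases. Since $c(\omega)$ scales each $\Lambda^{0,q}$, it manifestly preserves the splittings $S_+=\mathcal{L}_0\oplus(\Lambda^{0,2}\otimes\mathcal{L}_0)$ and $\tilde S_-=(\Lambda^{0,1}\otimes\mathcal{L}_1)\oplus(K_X^{-1}\otimes\mathcal{L}_1)$, where $K_X^{-1}\cong\Lambda^{0,3}$. Substituting $q=0,2$ gives eigenvalues $-3i,\,i$ on $S_+$, and $q=1,3$ gives $-i,\,3i$ on $\tilde S_-$, which are precisely the claimed diagonal matrices. I expect the only genuine obstacle to be the sign bookkeeping in the core step — getting $+i$ versus $-i$ correct on forms that do or do not contain $d\bar z_k$ — which is why it is safest to reduce the product $c(dx_k)c(dy_k)$ to the creation/annihilation operators and invoke the single anticommutator above, rather than manipulating the Lefschetz decomposition of Lemma \ref{lemma Lefschetz} directly.
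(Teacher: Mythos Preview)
Your proposal is correct and follows essentially the same route as the paper: both fix local holomorphic coordinates, write $c(\omega)=\sum_k c(dx_k)c(dy_k)$, and evaluate this on $\Lambda^{0,q}$ using formula~\eqref{Clifford one form}. The paper simply carries out the four cases $q=0,1,2,3$ by hand, whereas you extract the uniform pattern $c(dx_k)c(dy_k)=\pm i$ according to whether $d\bar z_k$ appears and sum to $i(2q-3)$; this is a tidier packaging of the same computation rather than a different argument.
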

\begin{proof}
    We take local holomorphic coordinates $\{z_k=x_k+iy_k\}_{k=1,2,3}$ centered at a point $x\in X$ so that the Kähler metric is standard to second order at the point.\par
    \underline{$c(\omega)$ acting on $\Omega^0(X,\mathcal{L}_0):$} Take $j\in\{1,2,3\}$ and $\phi^0\in\Omega^0(X,\mathcal{L}_0)$.
    Using formula \eqref{Clifford one form} we compute: 
    \begin{align*}
        c(dx_j\wedge dy_j)\phi^0&=c(dx_j)\circ c(dy_j)\phi^0\\
        &=c(dx_j)(\frac{i}{\sqrt{2}}d\bar z_j\wedge\phi^0)\\
        &=-i\phi^0
    \end{align*}
    \begin{align*}
    \text{Hence, }c\big(\sum_j dx_j\wedge dy_j\big)\phi^0=-3i\phi^0
    \end{align*}\par
    \underline{$c(\omega)$ acting on $\Omega^{0,2}(X,\mathcal{L}_0):$} Take $\mu\in\Omega^{0,2}(X,\mathcal{L}_0)$ and say in local coordinates
    \begin{align*}
        \mu=\mu_{12} d\bar z_1\wedge d\bar z_2+\mu_{23} d\bar z_2\wedge d\bar z_3+\mu_{31} d\bar z_3\wedge d\bar z_1
    \end{align*}
    We compute
    \begin{align*}
        c(dx_1\wedge dy_1)\mu=i(\mu_{12} d\bar z_1\wedge d\bar z_2-\mu_{23} d\bar z_2\wedge d\bar z_3+\mu_{31} d\bar z_3\wedge d\bar z_1),\\
        c(dx_2\wedge dy_2)\mu=i(\mu_{12} d\bar z_1\wedge d\bar z_2+\mu_{23} d\bar z_2\wedge d\bar z_3-\mu_{31} d\bar z_3\wedge d\bar z_1),\\
        c(dx_3\wedge dy_3)\mu=i(-\mu_{12} d\bar z_1\wedge d\bar z_2+\mu_{23} d\bar z_2\wedge d\bar z_3+\mu_{31} d\bar z_3\wedge d\bar z_1)
    \end{align*}
    \begin{align*}
    \text{Thereafter, }   c\big(\sum_j dx_j\wedge dy_j\big)\mu=i\mu 
    \end{align*}\par 
    \underline{$c(\omega)$ acting on $\Omega^{0,1}(X,\mathcal{L}_0):$} Take $\mu\in\Omega^{0,1}(X,\mathcal{L}_0)$ and say in local coordinates
    \begin{align*}
        \mu=\mu_1 d\bar z_1+\mu_2 d\bar z_2+\mu_3 d\bar z_3
    \end{align*}
    We compute
    \begin{align*}
        c(dx_1\wedge dy_1)\mu=i(\mu_1 d\bar z_1-\mu_2 d\bar z_2-\mu_3 d\bar z_3),\\
        c(dx_2\wedge dy_2)\mu=i(-\mu_1 d\bar z_1+\mu_2 d\bar z_2-\mu_3 d\bar z_3),\\
        c(dx_3\wedge dy_3)\mu=i(-\mu_1 d\bar z_1-\mu_2 d\bar z_2+\mu_3 d\bar z_3)
    \end{align*}
    \begin{align*}
        \text{Hence, }c\big(\sum_j dx_j\wedge dy_j\big)\mu=-i\mu
    \end{align*}\par
    \underline{$c(\omega)$ acting on $\Omega^{0,3}(X,\mathcal{L}_0):$} For any \(j \in \{1, 2, 3\}\), we have  
\begin{align*}
    c(dx_j \wedge dy_j)(d\bar{z}_1 \wedge d\bar{z}_2 \wedge dz_3) = i (d\bar{z}_1 \wedge d\bar{z}_2 \wedge dz_3).
\end{align*}
Summing over \(j = 1, 2, 3\), we obtain  
\begin{align*}
    c\Big(\sum_j dx_j \wedge dy_j\Big)(d\bar{z}_1 \wedge d\bar{z}_2 \wedge dz_3) = 3i (d\bar{z}_1 \wedge d\bar{z}_2 \wedge dz_3). \hspace{2 ex}\tag*{\qedhere} 
\end{align*}
\end{proof}

\begin{corollary}\label{Clifford omega^2}
    For $S_+=\mathcal{L}_0\oplus(\Lambda^{0,2}\otimes\mathcal{L}_0)$ and $\tilde{S}_-=(\Lambda^{0,1}\otimes\mathcal{L}_1)\oplus (K_X^{-1}\otimes\mathcal{L}_1).$ $c(\omega^2)$ preserves the splitting of $S_+$ and $\tilde{S}_-$ and acts on the summands as follows:
    \begin{align*}
        c(\omega^2)=\begin{bmatrix}
            -6\text{Id}&0\\
            0&2\text{Id}
        \end{bmatrix} \text{ on } S_+\text{ and }
        c(\omega^2)=\begin{bmatrix}
            2\text{Id}&0\\
            0&-6\text{Id}
        \end{bmatrix} \text{ on } \tilde{S}_-
    \end{align*}
\end{corollary}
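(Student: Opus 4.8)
The plan is to deduce the statement from Lemma \ref{Clifford omega} rather than recomputing everything in local coordinates. The key point to keep in mind is that Clifford multiplication is \emph{not} multiplicative on the exterior algebra, so $c(\omega^2)$ is genuinely different from $c(\omega)^2$: squaring the scalars furnished by Lemma \ref{Clifford omega} would give $-9$ and $-1$ on the two summands of $S_+$, whereas the claimed eigenvalues are $-6$ and $2$. The first task is therefore to pin down the exact relation between $c(\omega^2)$ and $c(\omega)^2$.

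To do this I would fix a point and choose a local orthonormal coframe $\{e_1,\dots,e_6\}$ adapted to $J$ so that $\omega=\sum_{j=1}^{3}e_{2j-1}\wedge e_{2j}$, and set $a_j:=c(e_{2j-1})c(e_{2j})$. Then $c(\omega)=\sum_j a_j$, so $c(\omega)^2=\sum_{j}a_j^2+\sum_{j\neq k}a_j a_k$. Using the Clifford relations $c(e_i)c(e_l)=-c(e_l)c(e_i)$ for $i\neq l$ and $c(e_i)^2=-1$, one finds $a_j^2=-1$ for each $j$, while for $j\neq k$ all four indices are distinct, so $a_j a_k=c(e_{2j-1}\wedge e_{2j}\wedge e_{2k-1}\wedge e_{2k})$. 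Since $\omega\wedge\omega=\sum_{j\neq k}e_{2j-1}\wedge e_{2j}\wedge e_{2k-1}\wedge e_{2k}$ (the $j=k$ terms vanish), the off-diagonal sum reassembles precisely into $c(\omega^2)$. Hence $c(\omega)^2=-3+c(\omega^2)$, that is,
\begin{align*}
c(\omega^2)=c(\omega)^2+3 .
\end{align*}

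With this identity in hand the corollary is immediate. Being a polynomial in $c(\omega)$, the operator $c(\omega^2)$ preserves the same splittings of $S_+$ and $\tilde S_-$ as $c(\omega)$ does, and on each summand it acts by the square of the corresponding scalar from Lemma \ref{Clifford omega}, plus $3$. Thus on $S_+$ one gets $(-3i)^2+3=-6$ on $\mathcal{L}_0$ and $i^2+3=2$ on $\Lambda^{0,2}\otimes\mathcal{L}_0$, while on $\tilde S_-$ one gets $(-i)^2+3=2$ on $\Lambda^{0,1}\otimes\mathcal{L}_1$ and $(3i)^2+3=-6$ on $K_X^{-1}\otimes\mathcal{L}_1$, matching the two stated matrices.

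The only real obstacle is the first step: getting the correction constant in $c(\omega^2)=c(\omega)^2+3$ right. The subtlety is to separate cleanly the diagonal Clifford squares (each contributing $-1$, for a total of $-n=-3$, the complex dimension) from the off-diagonal products (which recombine into $c(\omega^2)$), and in particular to resist the tempting but false identification $c(\omega^2)=c(\omega)^2$. Once the constant is correctly identified as the complex dimension, the remainder is a one-line substitution.
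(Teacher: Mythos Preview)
Your proof is correct, but it takes a different route from the paper's. The paper does not derive the universal identity $c(\omega^2)=c(\omega)^2+3$; instead it invokes Lemma~4.5 from \cite{JP}, which gives $c(\omega^2)=-2i\,c(\omega)$ on $S_+$ and $c(\omega^2)=2i\,c(\omega)$ on $S_-$ (this comes ultimately from $*\omega=\tfrac{1}{2}\omega^2$ in dimension~6 together with the standard relation between $c(*\alpha)$ and $c(\alpha)$ on half-spinors). Plugging in the eigenvalues from Lemma~\ref{Clifford omega} then yields the same matrices. Your argument has the advantage of being entirely self-contained and of making transparent why the constant is the complex dimension; the paper's argument is shorter once one accepts the cited lemma, and has the side benefit of recording the linear relation $c(\omega^2)=\mp 2i\,c(\omega)$, which is occasionally useful in its own right. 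Either way the substitution step is identical.
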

\begin{proof}
    Using lemma $4.5$ in \cite{JP} observe that 
    \begin{align*}
        c(\omega^2)=-2ic(\omega) \text{ on } S_+\\
        c(\omega^2)=2ic(\omega) \text{ on } S_-
    \end{align*}
    The corollary follows.
\end{proof}

\subsection{Solution on closed K\"ahler $3$-folds}\label{Proof 6d}
We construct a solution to the $6$-dimensional Seiberg--Witten equations on a closed K\"ahler threefold \((X, \omega)\). For this construction, we require two additional lemmas from K\"ahler geometry.
\begin{lemma}\label{Lemma 1}
 For a smooth complex valued function $h$ on a Kähler $3$-fold $(X,\omega),$ we have 
\begin{equation*}
    *(\partial\bar\partial h\wedge\omega)=-\partial\bar\partial h+\frac{i}{2}\Delta(h)\omega
\end{equation*}
\end{lemma}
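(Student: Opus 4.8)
The plan is to prove this as a pointwise identity of $(1,1)$-forms, following the same normal-coordinate strategy used in Lemmas \ref{lemma Lefschetz} and \ref{Clifford action formulae}. Fix $x\in X$ and choose local holomorphic coordinates $\{z_k=x_k+iy_k\}$ centred at $x$ in which the Kähler metric is standard to second order, so that $\omega=\frac{i}{2}\sum_k dz_k\wedge d\bar z_k$ at $x$. Since every operator appearing in the statement ($\partial$, $\bar\partial$, $\wedge\,\omega$, $*$, and $\Delta$ extended $\mathbb{C}$-linearly) is $\mathbb{C}$-linear in $h$, I may assume $h$ is real. Then the Hessian $(h_{j\bar k})=(\partial_{z_j}\partial_{\bar z_k}h)$ is Hermitian, and after a constant unitary change of the holomorphic coordinates (which keeps the metric standard at $x$ and leaves the tensorial identity unchanged) I can arrange it to be diagonal at $x$, so that $\partial\bar\partial h=\sum_j \mu_j\,dz_j\wedge d\bar z_j$ with $\mu_j=h_{j\bar j}\in\mathbb{R}$.

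The computational heart is the Hodge star on $(2,2)$-forms. I would decompose $\partial\bar\partial h$ into its $\omega$-trace and primitive parts, $\partial\bar\partial h = P + \frac13(\Lambda\partial\bar\partial h)\,\omega$ with $\Lambda P=0$, and then use the two facts $*(\omega^2)=2\omega$ and $*(P\wedge\omega)=-P$ for a primitive $(1,1)$-form $P$ (Weil's identity on a Kähler $3$-fold, equivalent to $*P=-\omega\wedge P$ via $**=\mathrm{Id}$ in these degrees). These give $*(\partial\bar\partial h\wedge\omega)=-P+\frac{2}{3}(\Lambda\partial\bar\partial h)\,\omega = -\partial\bar\partial h+(\Lambda\partial\bar\partial h)\,\omega$. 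Equivalently, and perhaps more in keeping with the rest of the paper, everything can be checked by a direct computation in the diagonal frame: starting from $\partial\bar\partial h\wedge\omega=\frac{i}{2}\sum_{j\neq k}\mu_j\,dz_j\wedge d\bar z_j\wedge dz_k\wedge d\bar z_k$ and using $*(dz_1\wedge d\bar z_1\wedge dz_2\wedge d\bar z_2)=-2i\,dz_3\wedge d\bar z_3$ together with its permutations, one reads off the same answer term by term.

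It remains to identify the scalar $\Lambda\partial\bar\partial h$ with $\frac{i}{2}\Delta h$. From $\Lambda\omega=3$ and the symmetry of the three coordinate directions one gets $\Lambda(dz_j\wedge d\bar z_j)=-2i$, hence $\Lambda\partial\bar\partial h=-2i\sum_j\mu_j=-2i\sum_j h_{j\bar j}$. Since $h_{j\bar j}=\frac14(\partial_{x_j}^2+\partial_{y_j}^2)h$, this equals $\frac{i}{2}\Delta h$, where $\Delta=d^*d=-\sum_j(\partial_{x_j}^2+\partial_{y_j}^2)$ is the (nonnegative, geometer's) Laplacian on functions. Substituting into the previous display yields exactly the stated formula $*(\partial\bar\partial h\wedge\omega)=-\partial\bar\partial h+\frac{i}{2}\Delta(h)\,\omega$.

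The main obstacle is purely bookkeeping: pinning down the Hodge-star and Laplacian conventions so that all the constants (the $-2i$ above, the $2$ in $*\omega^2=2\omega$, the sign in Weil's formula, and the resulting coefficient $\frac{i}{2}$) come out consistently. The safest way to fix these is the single explicit evaluation $*(dz_1\wedge d\bar z_1\wedge dz_2\wedge d\bar z_2)=-2i\,dz_3\wedge d\bar z_3$, obtained by passing to the real orthonormal coframe $(dx_1,dy_1,\dots,dx_3,dy_3)$ where $*$ is unambiguous; every other constant then follows from $\mathbb{C}$-linearity and the primitive decomposition.
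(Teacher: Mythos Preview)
Your proof is correct. The paper's own argument is exactly the bare-hands alternative you sketch in your second paragraph: it works in normal holomorphic coordinates at a point, writes out $\partial\bar\partial h\wedge\omega$ in the basis $dz_j\wedge d\bar z_k\wedge\omega$, and computes $*(dz_j\wedge d\bar z_k\wedge\omega)=-dz_j\wedge d\bar z_k$ for $j\neq k$ and $*(dz_k\wedge d\bar z_k\wedge\omega)=-dz_k\wedge d\bar z_k-2i\omega$, then sums. It does \emph{not} diagonalise the Hessian or reduce to real $h$ first; it just handles the off-diagonal and diagonal terms separately.

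Your primary route via the Lefschetz decomposition $\partial\bar\partial h=P+\tfrac13(\Lambda\partial\bar\partial h)\,\omega$ together with the two Hodge-star facts $*(\omega^2)=2\omega$ and $*(P\wedge\omega)=-P$ is a genuinely cleaner packaging: once those are known the identity drops out in one line, and the same template immediately yields the analogous $4$-fold identities in \S\ref{8d}. The trade-off is that it imports Weil's formula as a black box, whereas the paper's term-by-term computation is entirely self-contained. Two minor remarks: the diagonalisation step is not actually needed for your Lefschetz argument (both $\Lambda$ and $*$ are coordinate-free), and the reduction to real $h$ is likewise unnecessary since every operator in sight is $\mathbb{C}$-linear --- but neither costs you anything.
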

\begin{proof}
We take local holomorphic coordinates $\{z_k=x_k +i y_k\}_{k=1,2,3}$ centered at a point $x\in X$ so that the Kähler metric is standard to second order at the point. All calculations in the following are performed at the point $x$. We compute
\begin{align*}
    *\big(\partial\bar\partial h\wedge\omega\big)&=*\Big(\sum\limits_{j,k=1}^{3} \frac{\partial^2h}{\partial z_j\partial\bar{z}_k}d z_j\wedge d\bar{z}_k\wedge\omega\Big)\\
    &=*\Big(\sum\limits_{j\neq k}^{} \frac{\partial^2h}{\partial z_j\partial\bar{z}_k}d z_j\wedge d\bar{z}_k\wedge\omega+\sum_{k=1}^3 \frac{\partial^2h}{\partial z_k\partial\bar{z}_k}d z_k\wedge d\bar{z}_k\wedge\omega\Big)
\end{align*}
Notice that for $j\neq k,$
\begin{align*}
    *(d z_j\wedge d\bar{z}_k\wedge\omega)=-(d z_j\wedge d\bar{z}_k)
\end{align*}
and 
\begin{align*}
    *\big(d z_k\wedge d\bar{z}_k\wedge\omega\big)&=*\big(-2i d x_k\wedge d y_k\wedge (\sum\limits_{m=1}^3d x_m\wedge d y_m)\big)\\
    &=-2i\big(\sum\limits_{m\neq k}d x_m\wedge d y_m\big)\\
    &= -2i(\omega-d x_k\wedge d y_k)\\
    &=-d z_k\wedge d\bar{z}_k-2i\omega
\end{align*}
Hence we get
\begin{align*}
    &*\Big(\sum\limits_{j\neq k}^{} \frac{\partial^2h}{\partial z_j\partial\bar{z}_k}d z_j\wedge d\bar{z}_k\wedge\omega+\sum_{k=1}^3 \frac{\partial^2h}{\partial z_k\partial\bar{z}_k}d z_k\wedge d\bar{z}_k\wedge\omega\Big)\\
    &=-\sum\limits_{j\neq k} \frac{\partial^2h}{\partial z_j\partial\bar{z}_k}d z_j\wedge d\bar{z}_k-\sum_{k=1}^3 \frac{\partial^2h}{\partial z_k\partial\bar{z}_k}d z_k\wedge d\bar{z}_k-2i\Big(\sum_{k=1}^3 \frac{\partial^2h}{\partial z_k\partial\bar{z}_k}\Big)\omega\\
    &=-\Big(\sum\limits_{j,k=1}^{3} \frac{\partial^2h}{\partial z_j\partial\bar{z}_k}d z_j\wedge d\bar{z}_k\Big)-2i \Big(\sum\limits_{k=1}^{3}\frac{1}{2}(\frac{\partial}{\partial x_k}-i\frac{\partial}{\partial y_k})\frac{1}{2}(\frac{\partial}{\partial x_k}+i\frac{\partial}{\partial y_k})h\Big)\omega\\
    &=-\partial\bar\partial h-\frac{2i}{4}\Big(\sum\limits_{k=1}^{3}\big(\frac{\partial^2 h}{\partial x_k^2}+\frac{\partial^2 h}{\partial y_k^2}\big)\Big)\omega\\
    &=-\partial\bar\partial h+\frac{i}{2}\Delta(h)\omega\tag*{\qedhere} 
\end{align*}
\end{proof}

\begin{lemma}\label{lemma 2}
For a smooth complex valued function $h$ on a Kähler $n$-fold $(X,\omega)$, we have 
\begin{equation*}
    \langle\partial\bar\partial h,\omega\rangle=\frac{i}{2}\Delta(h)
\end{equation*}
\end{lemma}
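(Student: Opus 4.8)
The plan is to verify the identity pointwise, following the normal-coordinate strategy already used in the preceding lemmas. I would fix $x\in X$ and choose local holomorphic coordinates $\{z_k=x_k+iy_k\}_{k=1,\dots,n}$ centred at $x$ in which the Kähler metric is standard to second order, and perform all computations at $x$. There $\omega=\tfrac{i}{2}\sum_{l}dz_l\wedge d\bar z_l$ and $\partial\bar\partial h=\sum_{j,k}h_{j\bar k}\,dz_j\wedge d\bar z_k$, where $h_{j\bar k}=\partial^2 h/\partial z_j\partial\bar z_k$. Since $\langle\partial\bar\partial h,\omega\rangle$ is a pointwise Hermitian pairing of two $(1,1)$-forms, the whole statement reduces to knowing how the basis elements $dz_j\wedge d\bar z_k$ pair against one another.

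The key step is to record the orthogonality relations at $x$. From $dz_j=dx_j+i\,dy_j$ one obtains $\langle dz_j,dz_l\rangle=2\delta_{jl}$, $\langle d\bar z_j,d\bar z_l\rangle=2\delta_{jl}$ and $\langle dz_j,d\bar z_l\rangle=0$, so the induced pairing on two-forms gives $\langle dz_j\wedge d\bar z_k,\,dz_l\wedge d\bar z_m\rangle=4\,\delta_{jl}\delta_{km}$. Substituting the two expansions and remembering that the inner product is conjugate-linear in the $\omega$-slot (so the coefficient $\tfrac{i}{2}$ contributes as $-\tfrac{i}{2}$), only the diagonal terms $j=k=l$ survive, and I expect to land on $\langle\partial\bar\partial h,\omega\rangle=-2i\sum_j h_{j\bar j}$.

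It then remains to identify $\sum_j h_{j\bar j}$ with the Laplacian. Using the Wirtinger expression $h_{j\bar j}=\tfrac14\big(\partial^2/\partial x_j^2+\partial^2/\partial y_j^2\big)h$ together with the sign convention for $\Delta$ already fixed in Lemma \ref{Lemma 1} (there $\Delta=-\sum_j(\partial^2/\partial x_j^2+\partial^2/\partial y_j^2)$, the nonnegative Hodge Laplacian) yields $\sum_j h_{j\bar j}=-\tfrac14\Delta h$, and hence $\langle\partial\bar\partial h,\omega\rangle=\tfrac{i}{2}\Delta h$. The computation is dimension-independent, so the same argument covers every $n$, which is what the statement requires.

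The argument is otherwise entirely routine; the one genuine pitfall is the bookkeeping of the factor of $i$ and the overall sign, which depend on the convention for the conjugate-linear slot of the Hermitian inner product and on the sign of $\Delta$. As a convention-free safeguard I would note that $\langle\,\cdot\,,\omega\rangle=\Lambda$ on two-forms (since $\Lambda$ is the adjoint of $\wedge\omega$, one has $\overline{\Lambda\alpha}=\langle\omega,\alpha\rangle$), combined with the Kähler identity $\Lambda\partial\bar\partial=i\,\bar\partial^*\bar\partial$ on functions and $\bar\partial^*\bar\partial=\tfrac12\Delta$; this reproduces the coefficient $\tfrac{i}{2}$ and provides an independent cross-check that no stray minus sign has crept in.
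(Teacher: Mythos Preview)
Your proof is correct and follows essentially the same approach as the paper: fix normal holomorphic coordinates, reduce the pairing to the diagonal terms $\sum_j h_{j\bar j}$, and identify this with $-\tfrac14\Delta h$ via Wirtinger derivatives. The only cosmetic difference is that the paper computes $\langle\partial\bar\partial h,\omega\rangle$ as $*(\partial\bar\partial h\wedge *\omega)$ and manipulates the wedge products, whereas you compute the Hermitian pairing directly from the orthogonality relations $\langle dz_j\wedge d\bar z_k,dz_l\wedge d\bar z_m\rangle=4\delta_{jl}\delta_{km}$; since $\omega$ is real the two computations coincide, and your K\"ahler-identity cross-check is a nice addition not present in the paper.
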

\begin{proof}
We take local holomorphic coordinates $\{z_k=x_k +i y_k\}_{k=1,\dots,n}$ centered at a point $x\in X$ so that the Kähler metric is standard to second order at the point. All calculations in the following are performed at the point $x$. 
\begin{align*}
    \langle\partial\bar\partial h,\omega\rangle&=*\big(\partial\bar\partial h\wedge(*\omega)\big) \\
    &=*\Big(\sum\limits_{j,k=1}^{n} \frac{\partial^2h}{\partial z_j\partial\bar{z}_k}d z_j\wedge d\bar{z}_k\wedge (*\omega)\Big)
\end{align*}
Notice that for $j\neq k, d z_j\wedge d\bar{z}_k\wedge (*\omega)=0.$ Hence we get
\begin{align*}
    &*\Big(\sum\limits_{j,k=1}^{n} \frac{\partial^2h}{\partial z_j\partial\bar{z}_k}d z_j\wedge d\bar{z}_k\wedge (*\omega)\Big)\\
    &=*\Big(\sum\limits_{k=1}^{n} \frac{\partial^2h}{\partial z_k\partial\bar{z}_k}d z_k\wedge d\bar{z}_k\wedge (*\omega)\Big)\\
    &=-2i*\Big(\sum\limits_{k=1}^{n} \frac{\partial^2h}{\partial z_k\partial\bar{z}_k}d x_k\wedge d{y}_k\wedge (*\omega)\Big)\\
    &=-2i*\Big(\sum\limits_{k=1}^{n}\frac{1}{2}\big(\frac{\partial}{\partial x_k}-i\frac{\partial}{\partial y_k}\big)\frac{1}{2}\big(\frac{\partial}{\partial x_k}+i\frac{\partial}{\partial y_k}\big)h\hspace{0.5 ex} d\text{vol}\Big)\\
    &=\frac{-2i}{4}\Big(\sum\limits_{k=1}^{n}\big(\frac{\partial^2 h}{\partial x_k^2}+\frac{\partial^2 h}{\partial y_k^2}\big)\Big)\\
    &=\frac{i}{2}\Delta (h)\tag*{\qedhere} 
\end{align*}
\end{proof}
\begin{proof}[Proof of theorem \ref{Theorem 6d}]
Let $A_0$ be the Chern connection on $\mathcal{L}_0$ with respect to its hermitian metric, say $h_0$. Take a nontrivial holomorphic section $\varphi$ of $\mathcal{L}_0$ (this assumes dim $H^0(X,\mathcal{L}_0)>0$), i.e., $\bar\partial_{A_0}\varphi=0.$ It follows from lemma \ref{Clifford action formulae} that for any $f\in C^\infty(X,\mathbb{C}),\phi=e^{3i(1+i)f}\varphi,A=-A_{K_X}+2A_0$ and $\beta=(\bar\partial f+\partial \bar f)\wedge\omega$ solves the first Dirac equation \eqref{Dirac 8d}:
\begin{align*}
    \big(D_A+c(*\beta)\big)\phi=\sqrt{2}\bar\partial_{A_0}\phi+ic(\bar\partial f \wedge\omega)\phi=0
\end{align*}
Since $\phi\in\Omega^0(X,\mathcal{L}_0),$ using lemma \ref{Clifford omega} we identify the quadratic term $q(\phi):$
\begin{align*}
    q(\phi)=\frac{i}{4}|\phi|^2\omega
\end{align*}
Hence, the first curvature equation \ref{Curvature 6d +} simplifies to
\begin{align}
    (F_A+2id^*\beta)^{1,1}&=\frac{i}{4}|\phi|^2\omega\label{1,1,+},\\
    F_A^{0,2}&=0\label{0,2,+}
\end{align}
Equation \ref{0,2,+} is automatically solved: $F_A^{0,2}=2F_{A_0}^{0,2}=0,$ since $A_0$ is the Chern connection on $\mathcal{L}_0.$ Using lemma \ref{Lemma 1} we compute 
\begin{align*}
    d^*\beta=-2i*\big(\partial\bar\partial(\text{Re}f)\wedge\omega\big)=2i\partial\bar\partial(\text{Re}f)+\Delta(\text{Re}f)\omega
\end{align*}
Equation \eqref{1,1,+} reads
\begin{align}\label{Curvature draft}
    F_A-4\partial\bar\partial\text{Re}f+2i\Delta(\text{Re}f)\omega=\frac{i}{4}e^{-4\text{Re}f}|\varphi|^2\omega
\end{align}
The initial hermitian metric on $\mathcal{L}_0$ is $h_0,$ we make a conformal change to this metric by $e^{2\lambda}$ for a smooth function $\lambda:X\rightarrow\mathbb{R}.$ Call this new metric $h_0'=e^{2\lambda} h_0.$ Notice since the change in metric is conformal $\varphi$ is still a holomorphic section of $(\mathcal{L}_0,h_0'),$ i.e., if $A_0'$ is the Chern connection on $\mathcal{L}_0$ with respect to $h_0',\bar\partial_{A_0'}\varphi=0.$ Call the new induced connection on $L(S)$ by $A'.$ Then with respect to $A'$ and $h_0'$ equation \eqref{Curvature draft} reads
\begin{align}\label{Curvature draft 2}
    F_{A'}-4\partial\bar\partial\text{Re}f+2i\Delta(\text{Re}f)\omega=\frac{i}{4}e^{-4\text{Re}f}|\varphi|_{h_0'}^2\omega
\end{align}
The assumption $c_1(K_X^{-1}\otimes \mathcal{L}_0^2)=a_0[\omega]$ with $a_0<0$ gives us $F_A=4\partial\bar\partial f_0-2\pi ia_0\omega$ for some real valued smooth function $f_0$. Substituting this in equation \eqref{Curvature draft 2} we get
\begin{align*}
    4\partial\bar\partial{(f_0-\lambda-\text{Re}f)}+\big(2i\Delta(\text{Re}f)-\frac{i}{4}e^{-4\text{Re}f}e^{2\lambda}|\varphi|_{h_0}^2-2\pi i a_0\big)\omega=0
\end{align*}
which breaks into the following two equations:
\begin{align}
    \lambda+\text{Re}f=f_0\label{Curvature break 1} \text{  and}\\
    2\Delta(-\text{Re}f)+\frac{1}{4}e^{2\lambda-4\text{Re}f}|\varphi|^2=-2\pi a_0\label{Curvature break 2}
\end{align}
Substituting $\lambda=f_0-\text{Re}f$ from \eqref{Curvature break 1} into the equation \eqref{Curvature break 2} we get
\begin{align}
    2\Delta(-\text{Re}f)+\frac{e^{2f_0}|\varphi|^2}{4}e^{-6\text{Re}f}=-2\pi a_0\label{Curvature break 3}
\end{align}
This type of pde is known as Kazdan--Warner equation in the literature and has a unique solution for Re$f$ since $-2\pi a_0>0$ and $\varphi$ being a non-trivial holomorphic section has isolated zeroes \cite{KW}. Once we have Re$f$ from \eqref{Curvature break 3}, we solve for $\lambda$ from equation \eqref{Curvature break 1}. Finally we make a $\mathbb{C}^*$ gauge transformation to produce a solution with respect to the original metric $h_0$ on $\mathcal{L}_0.$\par
We turn our attention to the second pair of equations \eqref{Dirac 6d -},\eqref{Curvature 6d -}. Let's denote the hermitian metric on $\mathcal{L}_1$ by $h_1$ and the corresponding Chern connection by $B_0$. We pick a non-trivial section $\xi\in\Omega^{0,3}(X,\mathcal{L}_1)\cong \Omega^0(X,K_X^{-1}\otimes \mathcal{L}_1)$ such that $\bar\partial_{B_0}^*\xi=0.$ This is equivalent to choosing a non-trivial holomorphic section $\bar\xi$ of $\bar{\mathcal{L}}_1\cong \mathcal{L}_1^*,$ so we are indeed using the assumption that dim $H^0(X,K_X\otimes\mathcal{L}_1^{-1})> 0.$ Define $\psi:=e^{-2i\bar f}\xi$. A small calculation gives us
\begin{align*}
    \bar\partial^*_{B_0}\psi=\bar\partial^*_{B_0}(e^{-2i\bar f}\xi)=-2*(\partial\bar f\wedge\psi)    
\end{align*}
Using lemma \ref{Clifford action formulae} observe that $\psi,B_0,\beta=(\bar\partial f+\partial\bar f)\wedge\omega$ solves the Dirac equation \eqref{Dirac 6d -}:
\begin{align*}
    \big(D_B+c(\beta)\big)\psi=\sqrt{2}\bar\partial_{B_0}^*\psi+c(\partial\bar f\wedge\omega)\psi=0
\end{align*}
What remains to solve is the curvature equation \eqref{Curvature 6d -}. We use corollary \ref{Clifford omega^2} to see that for $\psi\in\Omega^{0,3}(X,\mathcal{L}_1)$, the quadratic term $q(\psi)$ can be identified as
\begin{align*}
    q(\psi)=-\frac{1}{8}|\psi|^2\omega^2
\end{align*}the curvature
equation \eqref{Curvature 6d -} breaks down into the following two equations:
\begin{align*}
    (-{i}*F_B+2d\beta)^{2,2}&=-\frac{1}{8}|\psi|^2\omega^2\\
    *F_B^{0,2}&=0
\end{align*}
Taking Hodge-star and multiplying by $i$ on both sides of these two equations we get
\begin{align}
    (F_B+2i*d\beta)^{1,1}&=-\frac{i}{4}|\psi|^2\omega\label{Curvature 2 break 1}\\
    F_B^{0,2}&=0\label{Curvature 2 break 2}
\end{align}
Equation \eqref{Curvature 2 break 2} is automatically solved: $F_B^{0,2}=2F_{B_0}^{0,2}=0$ since $B_0$ is the Chern connection on $\mathcal{L}_1.$
Using lemma \ref{Lemma 1} we calculate
\begin{align*}
    *d\beta=*\big(2i\partial\bar\partial(\text{Im}f)\wedge\omega\big)=-2i\partial\bar\partial(\text{Im}f)-\Delta(\text{Im}f)\omega
\end{align*}
Hence equation \eqref{Curvature 2 break 1} reads
\begin{align*}
    F_B+4\partial\bar\partial(\text{Im}f)-2i\Delta(\text{Im}f)\omega=-\frac{i}{4}e^{-4\text{Im}f}|\xi|^2\omega
\end{align*}
We perturb the initial metric $h_1$ on ${\mathcal{L}}_1$ by $e^{2\tilde\lambda},\tilde\lambda\in C^\infty(X,\mathbb{R}).$ The new metric being $\tilde{h}_1:=e^{2\tilde{\lambda}} h_1.$ Notice this conformal change in the metric doesn't change the holomorphic structure of $\mathcal{L}_1$ and induces a conformal change by $e^{-2\tilde\lambda}$ on the corresponding hermitian metric on $\mathcal{L}_1^{-1}.$ So, $\bar\xi$ remains a holomorphic section of $K_X-\mathcal{L}_1,$ and now when we go back to the corresponding anti-holomorphic section on $\mathcal{L}_1-K_X,$ the norm changes by $e^{-2\tilde\lambda}.$ 
Let's call the Chern connection on ${\mathcal{L}}_1$ with respect to the new metric ${\tilde{h}}_1$ by $B_0^{'}$ and the corresponding connection on $L(\tilde{S})$ by $B{'}.$ We write the equation with respect to the Chern connection $B'$ and the metric $\tilde{h}_1$ on $\mathcal{L}_1:$
\begin{align*}
    F_{B'}+ 4\partial\bar\partial(\text{Im}f)-2i\Delta(\text{Im}f)\omega=-\frac{i}{4}e^{-4\text{Im}f}|\xi|_{\tilde{h}_1}^2\omega
\end{align*}
Since $F_{B'}=F_B-4\partial\bar\partial\tilde{\lambda},$ the last equation reads
\begin{align}\label{curvature 1000}
    F_B+4\partial\bar\partial(\text{Im}f-2\tilde\lambda)-2i\Delta(\text{Im}f)\omega=-\frac{i}{4}e^{-4\text{Im}f-\tilde\lambda}|\xi|^2\omega
\end{align}
The assumption $c_1(K_X^{-1}\otimes \mathcal{L}_1^2)=a_1[\omega]$ (with $a_1>0$) gives us $F_B=4\partial\bar\partial f_1-2\pi ia_1\omega$ for a real valued smooth function $f_1$. Hence equation \eqref{curvature 1000} reads
\begin{align}\label{Curvature 1001}
    4\partial\bar\partial(f_1+\text{Im}f-\tilde\lambda)+i\big(2\Delta(-\text{Im}f)+\frac{1}{4}e^{-4\text{Im}f-2\tilde\lambda}|\xi|^2-2\pi a_1\big)\omega=0
\end{align}
Equation \eqref{Curvature 1001} breaks into two parts:
\begin{align}
    \tilde{\lambda}=f_1+\text{Im}f\label{Curvature 1002}\\
    2\Delta(-\text{Im}f_1)+\frac{1}{4}e^{-4\text{Im}f-2\tilde{\lambda}}|\xi|^2=2\pi a_1\label{Curvature 1003}
\end{align}
Substituting $\tilde\lambda$ from equation\eqref{Curvature 1002} into equation \eqref{Curvature 1003}, we get 
\begin{align}\label{Curvature 1004}
    2\Delta(-\text{Im}f)+\frac{e^{-2f_1}|\xi|^2}{4}e^{-6\text{Im}f}=2\pi a_1
\end{align}
$|\xi|^2$ has isolated zeroes since $\bar\xi$ is a holomorphic section of $K_X\otimes\mathcal{L}_1^{-1}$ and $2\pi a_1>0.$ Thereafter, equation \eqref{Curvature 1004} has a unique solution for Im$f$ \cite{KW}. Once we know Im$f$, we solve for $\tilde\lambda$ from equation \eqref{Curvature 1002}. In the end, we make a gauge transformation to produce a solution with respect to the original metric $h_1$ on $\mathcal{L}_1.$ 
\end{proof}
\subsection{Relationship with vortices}
\label{vortices}
From the proof above notice that there exist hermitian metrics on the two holomorphic line bundles on $\mathcal{L}_0$ and $\mathcal{L}_1$ such that for each non-trivial pair of $\varphi\in H^0(X,\mathcal{L}_0),\bar\xi\in H^0(X,K_X\otimes\mathcal{L}^{-1}_1),$ there is a unique way to solve for $f:X\rightarrow\mathbb{C}$ and the two unitary connections $A_0,B_0$ on $\mathcal{L}_0$ and $\mathcal{L}_1$ such that  
\begin{align*}
    \big(\phi=e^{-2f}\varphi,\psi=e^{-2i\bar f}\xi, A=(-A_{K_X}+2A_0),B=(-A_{K_X}+2B_0),\beta=(\bar\partial f+\partial\bar f)\wedge\omega\big)
\end{align*}
solves the $6$-dimensional Seiberg--Witten equations \eqref{Dirac 6d +},\eqref{Curvature 6d +},\eqref{Dirac 6d -},\eqref{Curvature 6d -}.\par 
We want to see what happens to this solution if we start with $a\varphi,b\xi$ for two non-zero constants $a,b\in\mathbb{C}^*.$ Define 
\begin{align*}
     f_{a,b}:=f+\frac{1}{2}\text{ln}|a|+\frac{i}{2}\text{ln}|b|
\end{align*}
If we would have started with $a\varphi$ and $b\bar\xi$ instead of $\varphi$ and $\bar\xi$, the above construction would give us a new solution to the  equations:
\begin{align*}
    &\big(e^{-2f_{a,b}}a\varphi,e^{-2i\bar f_{a,b}}b\xi,(-A_{K_X}+2A_0),(-A_{K_X}+2B_0),(\bar\partial f_{a,b}+\partial\bar f_{a,b})\wedge\omega\big)\\
    &=\Big(\frac{a}{|a|}e^{-i\text{ln}|b|}e^{-2f}\varphi,\frac{b}{|b|}e^{-i\text{ln}|a|}e^{-2i\bar f}\xi,(-2A_{K_X}+4A_0),(-2A_{K_X}+4B_0),(\bar\partial f+\partial\bar f)\wedge\omega\Big)
\end{align*}
Which is gauge-equivalent to the original solution
\begin{align*}
    \big(e^{-2f}\varphi,e^{-2i\bar f}\xi, (-A_{K_X}+2A_0),(-A_{K_X}+2B_0),\beta=(\bar\partial f+\partial\bar f)\wedge\omega\big)
\end{align*}
let's say that for $\mathcal{L}_0$ and $\mathcal{L}_1$, the four conditions mentioned in theorem \ref{Theorem 6d} are satisfied. Observe that the construction of a solution in \S\ref{Proof 6d} relies solely on the holomorphic structures $\mathcal{L}_0$ and $\mathcal{L}_1$ and of non-trivial holomorphic sections of \(\mathcal{L}_0\) and \(K_\Sigma\otimes\mathcal{L}_1^{-1}\). For a fixed choice of holomorphic structures on $\mathcal{L}_0,\mathcal{L}_1$ and two pairs of non-trivial sections which are equivalent modulo $\mathbb{C}^*\times\mathbb{C}^*$ action, we get two solutions of the Seiberg--Witten equations, which are $S^1\times S^1$ gauge equivalent. Thereafter, modulo gauge transformations, the space of solutions described in theorem \ref{Theorem 6d} can be identified with $\mathcal{M}_{\text{vortex}}\big(c_1(\mathcal{L}_0)\big)\times \mathcal{M}_{\text{vortex}}\big(c_1(K_X\otimes\mathcal{L}_1^{-1})\big).$

\subsection{Failure of compactness}\label{Compactness}
We give an explicit example where the compactness of the moduli space does indeed fail. In dimension $6$ this phenomenon is directly related to the existence of certain harmonic $3$ forms. Let's say $(A,\phi,B,\psi,\beta)$ solve the $6$-dimensional Seiberg--Witten equations \eqref{Dirac 6d +},\eqref{Curvature 6d +},\eqref{Dirac 6d -},\eqref{Curvature 6d -}. Now suppose that there is a harmonic $3$-form $\theta$ such that $c(*\theta)\phi=0$ and $c(\theta)\psi=0.$ Then for any $t\in\mathbb{R},(A,\phi,B,\psi,\beta+t\theta)$ is again a solution.\par
We look at our solutions of the Seiberg--Witten equations on closed K\"ahler three-folds. Following the construction in \S\ref{Proof 6d}, the two spinors $\phi\in\Omega^0(X,\mathcal{L}_0),\psi\in\Omega^{0,3}(X,\mathcal{L}_1).$ Suppose there exists a $\theta^{1,2}\in\Omega^{1,2}(X,\mathbb{C})$ such that $\theta^{1,2}$ is harmonic and $\theta^{1,2}\wedge\omega=0.$ Define $\theta=\theta^{1,2}+\overline{\theta^{1,2}}.$ Lemma \ref{lemma Lefschetz} and lemma \ref{Clifford action formulae} tell us
\begin{align*}
   c(*\theta)\phi=c(-i\overline{\theta^{1,2}})\phi=0\\
   c(\theta)\psi=c(\theta^{1,2})\psi=0 
\end{align*}
A harmonic $\theta^{1,2}\in\Omega^{1,2}$ such that $\theta^{1,2}\wedge\omega=0$ is an element of the primitive cohomology group $H^{1,2}_p$. Hence, this phenomenon can occur whenever $\phi\in\Omega^0(X,\mathcal{L}_0),\psi\in\Omega^{0,3}(X,\mathcal{L}_1)$ and $H^{1,2}_p\neq \{0\}.$ In example \ref{example 1}, where $X$ is the product of three Riemann surfaces of the same genus, we compute using K\"unneth: dim $H^{1,2}(X)=3$ and notice that $H^{1,2}(X)=H^{1,2}_p(X).$ Hence, $H^{1,2}_p(X)\neq\{0\}.$

\subsection{Taubes' limit}\label{Tabues' limit}
On a closed symplectic four-manifold $(M,\omega),$ with $b_+>1,\omega$ being a symplectic form on $M$, choosing a compatible almost complex structure $J$ also gives us a Riemannian metric. Moreover we have the canonical spinor bundle: $S^{\text{can}}=\Lambda^0\oplus\Lambda^{0,1}\oplus\Lambda^{0,2}$ and any other bundle of positive spinors is obtained by twisting $S^{\text{can}}$ by a complex line bundle. Hence the Seiberg--Witten invariant is a map: SW: $H^2(M,\mathbb{Z})\rightarrow\mathbb{Z}.$\par
Taubes defines another invariant  $\text{Gr}:H^2(M,\mathbb{Z})\rightarrow\mathbb{Z}$ called Gromov invariant \cite{Taubes5}. This is a Gromov--Witten type invariant, i.e., given an element of $H^2(M,\mathbb{Z})$ say $e,$ roughly speaking it gives a ``count" of pseudo-holomorphic curves whose homology class is $PD(e)$: Poincar\'e dual of $e.$ In a remarkable \textit{tour de force} Taubes also proves this two invariants  are same: ``SW$=$Gr" \cite{Taubes4}.\par
To prove one direction of this equivalence (``SW$\Rightarrow$ Gr") \cite{Taubes2}, Taubes studies a perturbed version of the Seiberg--Witten equations:
\begin{align}
    D_{A_r}\phi_r=0\label{Taubes Dirac}\\
    F_{A_r}^++F_{A_{K_M}}^++ir\omega=q(\phi_r)\label{Taubes curvature}
\end{align}
$r>>0$ and $A_{K_M}$ is the Chern connection on the canonical bundle $K_M$ with respect to the induced hermitian metric from the symplectic metric. Now say for the spin bundle $S^\text{can}\otimes\mathcal{L}$ ($\mathcal{L}$ is a complex line bundle) $(A_r,\phi_r)$ solves the equations \eqref{Taubes Dirac},\eqref{Taubes curvature}. We write
\begin{align*}
    \phi_r=\sqrt{r}(a_r+b_r)\in(\Omega^0\oplus\Omega^{0,2})\otimes\mathcal{L}.
\end{align*}
Taubes proves that as $r\rightarrow\infty, a_r^{-1}(0)$ converges to a pseudo-holomorphic curve with homology class $PD\big(c_1(L)\big)$ if not empty.\par
A similar story might exist for the the $6$-dimensional Seiberg--Witten equations. We take the base manifold to be a closed symplectic $3$-manifold $(X,\omega)$. Choose a compatible almost complex structure $J$. For two spinor bundles $S=S^{\text{can}}\otimes\mathcal{L}_0, \tilde{S}=S^{\text{can}}\otimes K_X\otimes\mathcal{L}_1$ ($\mathcal{L}_0,\mathcal{L}_1$ are line bundles) we define the perturbed Seiberg--Witten equations
\begin{align}
    \big(D_{A_r}+c(*\beta_r)\big)\phi_r=0\label{Dirac 6d + Taubes}\\
    F_{A_r}+F_{A_{K_X}}+2id^*\beta_r+ir\omega=q(\phi_r)\label{Curvature 6d + Taubes}\\
    \big(D_{B_r}+c(\beta_r)\big)\psi_r=0\label{Dirac 6d - Taubes}\\
    -{i}*(F_{B_r}-F_{A_{K_X}})+2d\beta_r-r\omega^2=q(\psi_r)\label{Curvature 6d - Taubes}
\end{align}
where $r>0$ and $A_{K_X}$ is the Chern connection on the canonical bundle $K_X$ with respect to the induced hermitian metric from the symplectic metric. Suppose $(A_r,\phi_r,B_r,\psi_r,\beta_r)$ is a solution to the  equations \eqref{Dirac 6d + Taubes},\eqref{Curvature 6d + Taubes},\eqref{Dirac 6d - Taubes},\eqref{Curvature 6d - Taubes}. We write
\begin{align*}
    \phi_r=\sqrt{r}(a_r+b_r)\in(\Omega^0\oplus\Omega^{0,2})\otimes\mathcal{L}_0\\
    \psi_r=\sqrt{r}(c_r+d_r)\in(\Omega^{0,1}\oplus\Omega^{0,3})\otimes (K_X\otimes\mathcal{L}_1)
\end{align*}
Notice $a_r\in \Omega^0(X,\mathcal{L}_0),d_r\in\Omega^0(X,\mathcal{L}_1)$ and if not empty $a^{-1}_r(0)$  and $d^{-1}_r(0)$ intersect along a surface with homology class $PD\big(c_1(\mathcal{L}_0)\smile c_1(\mathcal{L}_1)\big),\smile$ denoting the cup product between two elements of $H^2(X,\mathbb{Z}).$ One can hope to take the \textit{Taubes' limit} here, i.e., send $r\rightarrow\infty$ and see if the surface $a^{-1}_r(0)\cap d^{-1}_r(0)$ converges to a pseudo-holomorphic curve with homology class $PD\big(c_1(\mathcal{L}_0)\smile c_1(\mathcal{L}_1)\big)$.\par 
If $J$ is integrable, i.e., the manifold is K\"ahler, one can follow the proof of theorem \ref{Theorem 6d} and see that the same proof gives us the existence of a non-trivial solution to the  equations above for large enough $r$. Concretely speaking, there exists an $R>0$ (depending on $\mathcal L_0,\mathcal L_1)$ such that for all $r>R,$ there exists a non-trivial solutions to the equations \eqref{Dirac 6d + Taubes},\eqref{Curvature 6d + Taubes},\eqref{Dirac 6d - Taubes},\eqref{Curvature 6d - Taubes} under the following assumptions:
\begin{align*}
    &1.\hspace{1 ex} \text{dim }H^0(X,\mathcal{L}_0)>0\\
    &2. \hspace{1 ex}c_1(\mathcal{L}_0)=\tilde{a}_0[\omega]\hspace{1 ex}(\tilde{a}_0\in\mathbb{R})\\
    &3. \hspace{1 ex} \text{dim }H^0(X,\mathcal{L}_1^{-1})>0\\
    &4. \hspace{1 ex}c_1(\mathcal{L}_1)=\tilde{a}_1[\omega]\hspace{1 ex}(\tilde{a}_1\in\mathbb{R}).
\end{align*}
Moreover in that case, both $c_r,d_r$ would be zero. $a^{-1}_r(0)$  and $d^{-1}_r(0)$ would be divisors, being the zero set of holomorphic sections of $\mathcal{L}_0$ and $\mathcal{L}_1^{-1}$ respectively. Not only that, for all $r>R$, $a^{-1}_r(0)$  and $d^{-1}_r(0)$ would be the same pair of divisors independent of $r$ and hence generically they would intersect along a smooth complex curve.\par
Taubes also proves that for a compact symplectic 4-manifold with $b_+>1$ the Seiberg--Witten invariant for the canonical spin$^\mathbb{C}$-structure is always $1$ \cite{Taubes1}. Following our construction of the solution on a closed K\"ahler $3$-fold $X$ in \S\ref{Proof 6d}, if $b_1=b_3=0$ and the four necessary conditions are satisfied for $\mathcal{L}_0$ and $\mathcal{L}_1,$ the moduli space of the constructed solutions is $\mathbb{CP}(H^0(X,\mathcal{L}_0))\times \mathbb{CP}(H^0(X,K_X\otimes\mathcal{L}_1^{-1}))$. Moreover, if $\mathcal{L}_0=\underline{\mathbb{C}},$ the trivial line bundle and $\mathcal{L}_1=K_X,$ the canonical line bundle; then $\mathbb{CP}(H^0(X,\mathcal{L}_0))\times \mathbb{CP}(H^0(X,K_X\otimes\mathcal{L}_1^{-1}))$ becomes a singleton set.
\subsection{Solution on $\Sigma\times\mathbb{C}^2$}\label{Riemann surface times C2}
The product metric on $\Sigma\times\mathbb{C}^2$ induces spin bundles of opposite chirality for two holomorphic hermitian line bundles $\mathcal{L}_0,\mathcal{L}_1$ on $\Sigma$:
\begin{align*}
    S_+(\Sigma\times\mathbb{C}^2)=\Big(\pi_1^*(\mathcal{L}_0)\otimes \pi_2^*\big(S_+(\mathbb{C}^2)\big)\Big)\oplus \Big(\pi_1^*(K_\Sigma^{-1}\otimes\mathcal{L}_0)\otimes \pi_2^*\big(S_-(\mathbb{C}^2)\big)\Big)\\
    \tilde{S}_-(\Sigma\times\mathbb{C}^2)=\Big(\pi_1^*(\mathcal{L}_1)\otimes \pi_2^*\big(S_-(\mathbb{C}^2)\big)\Big)\oplus \Big(\pi_1^*(K_\Sigma^{-1}\otimes\mathcal{L}_1)\otimes \pi_2^*\big(S_+(\mathbb{C}^2)\big)\Big)
\end{align*}
The associated line bundles are
\begin{align*}
    L\big(S_+(\Sigma\times\mathbb{C}^2)\big)=\pi_1^*(K_\Sigma^{-1}\otimes \mathcal{L}_0^2)\\
    L\big(\tilde{S}_-(\Sigma\times\mathbb{C}^2)\big)=\pi_1^*(K_\Sigma^{-1}\otimes \mathcal{L}_0^2)
\end{align*}
The K\"ahler metric on $\Sigma$ determines a metric on $K_\Sigma$. We call the corresponding Chern connection with respect to this metric by $A_{K_\Sigma}.$ For any two unitary connections $A_0$ and $A_1$ respectively on $\mathcal{L}_0$ and $\mathcal{L}_1$, we get two unitary connections $A$ and $B$ on $L\big(S_+(\Sigma\times\mathbb{C}^2)\big)$ and $L\big(\tilde{S}_-(\Sigma\times\mathbb{C}^2)\big)$ via $A=\pi_1^*(-A_{K_\Sigma}+2A_0)$ and $B=\pi_1^*(-A_{K_\Sigma}+2A_1)$ (with abuse of notation). We call the K\"ahler form on $\Sigma\times\mathbb{C}^2$ by $\omega:=\pi_1^*(\omega_\Sigma)+\pi_2^*(\omega_{\mathbb{C}^2}).$
\begin{proof}[Proof of theorem \ref{Theorem 6d sigma}] The proof is very similar to the proof of theorem \ref{Theorem 6d}. We start with the Chern connection, say $A_0$ on $\mathcal{L}_0$ with respect to the given hermitian metric on it (say $h_0).$ Take a nontrivial holomorphic section $\varphi$ of $\mathcal{L}_0$ (this uses the assumption that dim $H^0(\Sigma,\mathcal{L}_0)>0$), i.e., $\bar\partial_{A_0}\varphi=0.$ It follows from lemma \ref{Clifford action formulae} that for any $f\in C^\infty(\Sigma,\mathbb{C}),\phi=\pi_1^*(e^{-2f}\varphi)\otimes \pi_2^*(1)\in\Gamma(\pi_1^*\mathcal{L}_0\otimes\pi_2^*\Lambda^0),\beta=\pi_1^*(\bar\partial f+\partial \bar f)\wedge\omega=\big(\bar\partial(\pi_1^* f)+\partial  (\overline {\pi_1^*f})\big)\wedge\omega$ and $A=\pi_1^*(-A_{K_\Sigma}+2A_0)$ solves the first Dirac equation \eqref{Dirac 6d + sigma}:
\begin{align*}
    \big(D_A+c(*\beta)\big)\phi=\sqrt{2}\pi_1^*\big(\bar\partial_{A_0}(e^{-2f}\varphi)\big)\otimes \pi_2^*(1)+ic\big(\bar\partial(\pi_1^*f) \wedge\omega)\big)\phi=0
\end{align*}
Using lemma \ref{Clifford omega} we identify the quadratic term $q(\phi):$
\begin{align*}
    q(\phi)=\frac{i}{4}|\phi|^2\omega
\end{align*}
The first curvature equation \ref{Curvature 6d + sigma} simplifies to
\begin{align}
    (F_A+2id^*\beta)^{1,1}+\pi_2^*(ir_0\omega_{\mathbb{C}^2})&=\frac{i}{4}|\phi|^2\omega\label{1,1,+ sigma},\\
    F_A^{0,2}&=0\label{0,2,+ sigma}
\end{align}
Equation \ref{0,2,+ sigma} is automatically solved: $F_A^{0,2}=2\pi_1^*(F_{A_0}^{0,2})=0,$ since $A_0$ is the Chern connection on $\mathcal{L}_0.$ Using lemma \ref{Lemma 1} and \ref{lemma 2} we compute 
\begin{align*}
    d^*\beta=-2i*\big(\partial\bar\partial(\pi_1^*\text{Re}f)\wedge\omega\big)=2i\partial\bar\partial(\pi_1^*\text{Re}f)+\Delta(\pi_1^*\text{Re}f)\omega=\pi_1^*(\Delta_\Sigma\text{Re}f)\pi_2^*(\omega_{\mathbb{C}^2})
\end{align*}
Equation \eqref{1,1,+ sigma} breaks into two parts:
\begin{align}
    2\Delta_\Sigma(-\text{Re}f)+\frac{1}{4}e^{-4\text{Re}f}|\varphi|^2=r_0\label{sigma + 1}\\
    2F_{A_0}-F_{K_\Sigma}=\frac{i}{4}e^{-4\text{Re}f}|\varphi|^2\omega_\Sigma\label{sigma +2}
\end{align}
The initial hermitian metric on $\mathcal{L}_0$ is $h_0,$ we make a conformal change to this metric by $e^\lambda$ for a smooth function $\lambda:\Sigma\rightarrow\mathbb{R}.$ Call this new metric $h_0'=e^\lambda h_0.$ Notice since the change in metric is conformal $\varphi$ is still a holomorphic section of $(\mathcal{L}_0,h_0'),$ i.e., if $A_0'$ is the Chern connection on $\mathcal{L}_0$ with respect to $h_0',\bar\partial_{A_0'}\varphi=0.$ With respect to $A_0'$ and $h_0'$ equations \eqref{sigma + 1} and \eqref{sigma +2} read
\begin{align}
    2\Delta_\Sigma(-\text{Re}f)+\frac{1}{4}e^{-4\text{Re}f}|\varphi|^2_{h_0'}=r_0\label{sigma + 100}\\
    2F_{A_0'}-A_{K_\Sigma}=\frac{i}{4}e^{-4\text{Re}f}|\varphi|^2_{h_0'}\omega_\Sigma\label{sigma +3}
\end{align}
Since $F_{A_0'}=F_{A_0}-\partial\bar\partial\lambda,$ equation \eqref{sigma +3} reads
\begin{align*}
    2F_{A_0}-2\partial\bar\partial\lambda-A_{K_\Sigma}=\frac{i}{4}e^{\lambda-4\text{Re}f}|\varphi|^2\omega_\Sigma
\end{align*}
Taking point wise inner product with $\omega_\Sigma$ on both sides yields
\begin{align}
    \langle 2F_{A_0}-F_{A_\Sigma},\omega_\Sigma\rangle-i\Delta_\Sigma\lambda=\frac{i}{4}e^{\lambda-4\text{Re}f}|\varphi|^2 
\end{align}
which is same as 
\begin{align}
    \Delta_\Sigma\lambda+\frac{1}{4}e^{\lambda-4\text{Re}f}|\varphi|^2 =-i\langle 2F_{A_0}-F_{A_\Sigma},\omega_\Sigma\rangle\label{sigma +4}
\end{align}
Multiplying both sides of the equation \eqref{sigma + 100} by $2$ and adding it to the equation \eqref{sigma +4} we get
\begin{align}\label{eq 54}
    \Delta_\Sigma(\lambda-4\text{Re}f)+\frac{3|\varphi|^2}{4}e^{(\lambda-4\text{Re}f)}=2r_0+i\langle F_{A_\Sigma}-2F_{A_0},\omega_\Sigma\rangle
\end{align}
$|\varphi|^2$ has isolated zeroes and since $r_0=\frac{2\pi\text{deg}(K_\Sigma-2\mathcal{L})}{\text{vol}(\Sigma)}>0,$ we have $\int_\Sigma (2r_0+i\langle F_{A_\Sigma}-2F_{A_0},\omega_\Sigma\rangle)\omega_\Sigma>0.$ Therefore, there exists a unique solution for $(\lambda-4\text{Re}f)$ from equation \eqref{eq 54} \cite{KW}. On the other hand, multiplying both sides of the equation \eqref{sigma + 100} by $-1$ and adding it to the equation \eqref{sigma +4} we get
\begin{align}\label{eq 89}
   \Delta_\Sigma(2\text{Re}f+\lambda)=i\langle F_{A_\Sigma}-2F_{A_0},\omega_\Sigma\rangle-r_0 
\end{align}
Since $r_0=\frac{2\pi\text{deg}(K_\Sigma-2\mathcal{L}_0)}{\text{vol}(\Sigma)}, \int_\Sigma (i\langle F_{A_\Sigma}-2F_{A_0},\omega_\Sigma\rangle-r_0 )\omega_\Sigma=0.$ Hence there exists a unique solution for $(2\text{Re}f+\lambda)$ from equation \eqref{eq 89}. As we have solved both functions $(\lambda-4\text{Re}f)$ and $2\text{Re}f+\lambda,$ we can determine \(\text{Re}f\) and \(\lambda\) individually. Finally, we perform a \(\mathbb{C}^*\) gauge transformation to obtain a solution with respect to the original metric \(h_0\).

Now we solve the second pair of equations  \eqref{Dirac 6d - sigma},\eqref{Curvature 6d - sigma}. Let's denote the hermitian metric on $\mathcal{L}_1$ by $h_1$ and the corresponding Chern connection by $B_0$. We pick a non-trivial section $\xi\in\Omega^{0,3}(\Sigma,\mathcal{L}_1)\cong \Omega^0(\Sigma,K_\Sigma^{-1}\otimes \mathcal{L}_1)$ such that $\bar\partial_{B_0}^*\xi=0.$ This is equivalent to choosing a non-trivial holomorphic section $\bar\xi$ of $\bar{\mathcal{L}}_1\cong \mathcal{L}_1^*,$ so we are indeed using the assumption that dim $H^0(\Sigma,K_\Sigma\otimes\mathcal{L}_1^{-1})> 0.$ Define $\psi:=\pi_1^*(e^{-2i\bar f}\xi)\otimes\pi_2^*(1)\in \Gamma\big(\pi_1^*(K_\Sigma^{-1}\otimes\mathcal{L}_1)\otimes\pi_2^*\Lambda^0\big)$. A small calculation gives us
\begin{align*}
    D_B(\psi)=\sqrt{2}\pi_1^*\big(\bar\partial^*_{B_0}(e^{-2i\bar f}\xi)\big)\otimes\pi_2^*(1)=-2\sqrt{2}\pi_1^*\Big(*_\Sigma\big(\partial\bar f\wedge (e^{-2i\bar f}\xi)\big)\Big)\otimes\pi_2^*(1)
\end{align*}
Observe that $\psi,B,\beta=\pi_1^*(\bar\partial f+\partial \bar f)\wedge\omega$ solves the Dirac equation \eqref{Dirac 6d - sigma}:
\begin{align*}
    \big(D_B+c(\beta)\big)\psi&=D_B(\psi)+c(\pi_1^*(\partial\bar f)\wedge\omega_{\mathbb{C}^2})\psi\\
    &=D_B(\psi)+\pi_1^*\big(c(\partial\bar f)(e^{-2i\bar f}\xi)\big)\otimes \pi_2^*\big(c(\omega_{\mathbb{C}^2})1\big)\\
    &=0
\end{align*}
What remains to solve is the curvature equation \eqref{Curvature 6d - sigma}. We use corollary \ref{Clifford omega^2} to see that the quadratic term $q(\psi)$ can be identified as
\begin{align*}
    q(\psi)=-\frac{1}{8}|\psi|^2\omega^2
\end{align*}the curvature
equation \eqref{Curvature 6d - sigma} breaks down into the following two equations:
\begin{align*}
    \big(-{i}*F_B+2d\beta+i*\pi_2^*(ir_1\omega_{\mathbb{C}^2})\big)^{2,2}&=-\frac{1}{8}|\psi|^2\omega^2\\
    *F_B^{0,2}&=0
\end{align*}
Taking Hodge-star and multiplying by $i$ on both sides of these two equations we get
\begin{align}
    \big(F_B+2i*d\beta-\pi_2^*(ir_1\omega_{\mathbb{C}^2})\big)^{1,1}&=-\frac{i}{4}|\psi|^2\omega\label{Curvature 2 break 1 sigma}\\
    F_B^{0,2}&=0\label{Curvature 2 break 2 sigma}
\end{align}
Equation \eqref{Curvature 2 break 2 sigma} is automatically solved: $F_B^{0,2}=2\pi_1^*(F_{B_0}^{0,2})=0,$ since $B_0$ is the Chern connection on $\mathcal{L}_1.$
Using lemma \ref{Lemma 1} and \ref{lemma 2} we calculate
\begin{align*}
    *d\beta&=*\big(2i\partial\bar\partial(\pi_1^*(\text{Im}f))\wedge\omega\big)\\
    &=-2i\partial\bar\partial(\pi_1^*(\text{Im}f))-\Delta(\pi_1^*(\text{Im}f))\omega\\
    &=-\pi_1^*\big(\Delta_\Sigma (\text{Im}f)\big)\pi_2^*(\omega_{\mathbb{C}^2})
\end{align*}
Equation \eqref{Curvature 2 break 1 sigma} breaks into two equations:
\begin{align}
    2\Delta_\Sigma(-\text{Im}f)+\frac{1}{4}e^{-4\text{Im}f}|\xi|^2=r_1\label{curature 2 - 1sigma}\\
    2F_{B_0}-F_{K_\Sigma}=-\frac{i}{4}e^{-4\text{Im}f}|\xi|^2\omega_\Sigma\label{curature 2 - 2sigma}
\end{align}
We perturb the initial metric $h_1$ on ${\mathcal{L}}_1$ by $e^{\tilde\lambda},\tilde\lambda\in C^\infty(\Sigma,\mathbb{R}).$ The new metric being $\tilde{h}_1:=e^{\tilde{\lambda}} h_1.$ Notice this conformal change in the metric doesn't change the holomorphic structure of $\mathcal{L}_1$ and induces a conformal change by $e^{-\tilde\lambda}$ on the corresponding hermitian metric on $\mathcal{L}_1^{-1}.$ So, $\bar\xi$ remains a holomorphic section of $K_\Sigma-\mathcal{L}_1,$ and now when we go back to the corresponding anti-holomorphic section on $\mathcal{L}_1-K_\Sigma,$ the norm changes by $e^{-\tilde\lambda}.$ 
Let's call the Chern connection on ${\mathcal{L}}_1$ with respect to the new metric ${\tilde{h}}_1$ by $B_0'.$ We write equations \eqref{curature 2 - 1sigma},\eqref{curature 2 - 2sigma} with respect to the Chern connection $B_0'$ and the metric $\tilde{h}_1$ on $\mathcal{L}_1:$
\begin{align}
    2\Delta_\Sigma(-\text{Im}f)+\frac{1}{4}e^{-4\text{Im}f}|\xi|^2_{\tilde{h}_1}=r_1\label{850}\\
    2F_{B_0'}-F_{K_\Sigma}=-\frac{i}{4}e^{-4\text{Im}f}|\xi|^2_{\tilde{h}_1}\omega_\Sigma
\end{align}
Since $F_{B'}=F_B-\partial\bar\partial\tilde{\lambda},$ the last equation reads
\begin{align}\label{curvature 2 - 4sigma}
    2F_{B_0}-F_{K_\Sigma}-2\partial\bar\partial\tilde{\lambda}=-\frac{i}{4}e^{-4\text{Im}f-\tilde{\lambda}}|\xi|^2\omega_\Sigma
\end{align}
Taking inner product with $\omega_{\Sigma}$ on both sides of the equation \eqref{curvature 2 - 4sigma} we get
\begin{align*}
    \langle 2F_{B_0}-F_{K_\Sigma},\omega_\Sigma\rangle-i\Delta_\Sigma\tilde\lambda=-\frac{i}{4}e^{-4\text{Im}f-\tilde{\lambda}}|\xi|^2
\end{align*}
which is same as
\begin{align}\label{curvature 2 - 5sigma}
    \Delta_\Sigma(-\tilde\lambda)+\frac{1}{4}e^{-4\text{Im}f-\tilde{\lambda}}|\xi|^2 =i\langle (2F_{B_0}-F_{K_\Sigma}),\omega_\Sigma\rangle
\end{align}
Multiplying both sides of the equation \eqref{850} by $2$ and adding it to the equation \eqref{curvature 2 - 5sigma} we get
\begin{align}\label{64}
    \Delta_\Sigma(-4\text{Im}f-\tilde\lambda)+\frac{3|\xi|^2}{4}e^{(-4\text{Im}f-\tilde\lambda)}=2r_1+i\langle (2F_{B_0}-F_{K_\Sigma}),\omega_\Sigma\rangle
\end{align}
$|\xi|^2$ has isolated zeroes and since $r_1=-\frac{2\pi\text{deg}(K_\Sigma-2\mathcal{L}_1)}{\text{vol}(\Sigma)}>0,$ we have $\int_\Sigma (2r_1+i\langle (2F_{B_0}-F_{K_\Sigma}),\omega_\Sigma\rangle)\omega_\Sigma>0.$ Hence we have a unique solution for $(-4\text{Im}f-\tilde\lambda)$ from equation \eqref{64} \cite{KW}.\par
On the other hand, multiplying both sides of the equation \eqref{850} by $-1$ and adding it to the equation \eqref{curvature 2 - 5sigma} we get 
\begin{align}
    \Delta_\Sigma(2\text{Im}f-\tilde{\lambda})=-r_1+i\langle (2F_{B_0}-F_{K_\Sigma}),\omega_\Sigma\rangle\label{65}
\end{align}
Since $r_1=-\frac{2\pi\text{deg}(K_\Sigma-2\mathcal{L}_1)}{\text{vol}(\Sigma)},\int_\Sigma (-r_1+i\langle (2F_{B_0}-F_{K_\Sigma}),\omega_\Sigma\rangle)\omega_\Sigma=0.$ Thereafter there is a unique solution for $(2\text{Im}f-\tilde{\lambda})$ from equation \eqref{65}. Since we we have solved both the functions $-(4\text{Im}f+\tilde\lambda),(2\text{Im}f-\tilde\lambda),$ we can determine Im$f$ and $\tilde\lambda$ individually. Finally we make a $\mathbb{C}^*$ gauge transformation to get a solution with respect to the initial metric.  
\end{proof}
\begin{remark}
    From the proof above, observe that for fixed holomorphic structures on $\mathcal{L}_0$ and $\mathcal{L}_1$, the construction relies solely on the non-trivial holomorphic sections of \(\mathcal{L}_0\) and \(K_\Sigma - \mathcal{L}_1\). Similar reasoning as in \S\ref{vortices} demonstrates that two pairs of holomorphic sections of \(\mathcal{L}_0\) and \(K_\Sigma - \mathcal{L}_1\) belonging to the same conformal class (i.e., differing by constant multiples) yield gauge-equivalent solutions to the Seiberg--Witten equations \eqref{Dirac 6d + sigma}, \eqref{Curvature 6d + sigma}, \eqref{Dirac 6d - sigma}, and \eqref{Curvature 6d - sigma}. Hence, the moduli space of the constructed solutions can be identified with $\mathcal{M}_{\text{vortex}}\big(c_1(\mathcal{L}_0)\big)\times \mathcal{M}_{\text{vortex}}\big(c_1(K_\Sigma\otimes\mathcal{L}_1^{-1})\big).$
\end{remark}
\section{Solutions to the $8$-dimensional Seiberg--Witten equations}\label{8d}
In this section we construct solutions to the $8$-dimensional Seiberg--Witten equations \eqref{Dirac 8d}, \eqref{Curvature 8d} on a closed K\"ahler $4$-fold $(X,\omega).$ 
\subsection{Spin$^\mathbb{C}$-bundle, Dirac operator and Clifford multiplication}
For a holomorphic Hermitian line bundle \(\mathcal{L}\), we define the positive spin bundle as
\begin{align*}
    S_+(X)=\Lambda^0(X,\mathcal{L})\oplus \Lambda^{0,2}(X,\mathcal{L})\oplus \Lambda^{0,4}(X,\mathcal{L}).
\end{align*}
The associated line bundle to the spin$^\mathbb{C}$-structure is
\begin{align*}
    L(S)=K_X^{-1}\otimes\mathcal{L}^2.
\end{align*}
A unitary connection $A$ on $L(S)$ is equivalent to a unitary connection $A_0$ on $\mathcal{L}_0,$ the equivalence being $A=-A_{K_X}+2A_0$ (with abuse of notation) where $A_{K_X}$ is the Chern connection on $K_X$ with respect to the canonical hermitian metric on $K_X$ induced from the K\"ahler metric. \par 
The Dirac operator associated to the connection $A$ on $L(S)$ is\cite{Morgan}
\begin{align*}
    D_A=\sqrt{2}(\bar\partial_{A_0}+{\bar\partial}_{A_0}^*):\Omega^0(X,\mathcal{L})\oplus \Omega^{0,2}(X,\mathcal{L})\oplus \Omega^{0,4}(X,\mathcal{L})\rightarrow \Omega^{0,1}(X,\mathcal{L})\oplus \Omega^{0,3}(X,\mathcal{L}).
\end{align*}
The operator is obtained by coupling $\sqrt{2}(\bar\partial+{\bar\partial}^*)$ with the covariant derivative $\nabla_{A_0}.$\par
We prove some formulae for Clifford action of two, three and four forms on the positive spinors. These formulae will be explicitly used in the construction of a solution of the $8$-dimensional Seiberg--Witten equations \eqref{Dirac 8d}, \eqref{Curvature 8d}.
\begin{lemma}\label{Clifford action lemma 1 8d} For any $\eta\in\Omega^{0,1},\varphi\in\Omega^0(X,\mathcal{L})$ and $\xi\in\Omega^{0,4}(X,\mathcal{L}),$ we have
\begin{align*}
    &1.\hspace{1 ex}c(\eta\wedge\omega)\varphi=-3\sqrt{2}i\eta\wedge\varphi\\
    &2.\hspace{1 ex} c(\bar\eta\wedge\omega)\xi=3\sqrt{2}*(\bar\eta\wedge\xi)
\end{align*}
\end{lemma}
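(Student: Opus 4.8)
The plan is to mimic the proof of lemma \ref{Clifford action formulae} almost verbatim, the only essential difference being that the complex dimension is now $4$ rather than $3$; this is precisely what turns the coefficient $2\sqrt{2}$ there into $3\sqrt{2}$ here. First I would fix a point $x\in X$ and choose local holomorphic coordinates $\{z_k=x_k+iy_k\}_{k=1,\dots,4}$ in which the Kähler metric is standard to second order, so that at $x$ we may write $\omega=\sum_{k=1}^{4}dx_k\wedge dy_k$ and all Clifford computations reduce to the flat model through \eqref{Clifford one form}. Since the Clifford action is linear in the form and in the spinor, it suffices to verify both identities on basis elements: for the first formula I take $\eta=d\bar z_j$, and for the second I take $\bar\eta=dz_j$ together with $\xi=d\bar z_1\wedge d\bar z_2\wedge d\bar z_3\wedge d\bar z_4$ (tensored with a local section of $\mathcal{L}$).

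For the first formula, at $x$ we have $d\bar z_j\wedge\omega=\sum_{k\neq j}d\bar z_j\wedge dx_k\wedge dy_k$, because the $k=j$ summand $d\bar z_j\wedge dx_j\wedge dy_j$ vanishes ($d\bar z_j$ already involves $dx_j$ and $dy_j$). There are now exactly three indices $k\neq j$, rather than two. Each summand is handled exactly as in lemma \ref{Clifford action formulae}: factoring $c(d\bar z_j\wedge dx_k\wedge dy_k)=c(d\bar z_j)\circ c(dx_k)\circ c(dy_k)$ and applying \eqref{Clifford one form} three times gives $c(d\bar z_j\wedge dx_k\wedge dy_k)\varphi=-\sqrt{2}\,i\,d\bar z_j\wedge\varphi$. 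Summing the three identical contributions yields $c(d\bar z_j\wedge\omega)\varphi=-3\sqrt{2}\,i\,d\bar z_j\wedge\varphi$, and linearity in $\eta$ gives the first formula of the lemma.

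For the second formula I again write $dz_j\wedge\omega=\sum_{k\neq j}dz_j\wedge dx_k\wedge dy_k$ (three terms) and compute each $c(dz_j\wedge dx_k\wedge dy_k)\xi=c(dz_j)\circ c(dx_k)\circ c(dy_k)\xi$ from \eqref{Clifford one form}, using that $\xi$ is the top antiholomorphic form, so that the wedge terms drop out and only the contractions survive (cf. remark \ref{remark Clifford action}). After summing the three contributions, the remaining step is to recognise the result as $3\sqrt{2}\,*(\bar\eta\wedge\xi)$ by a direct evaluation of the Hodge star on the relevant basis form at $x$, exactly in the spirit of the identity $c(dz_j\wedge\omega)(d\bar z_j\wedge d\bar z_k\wedge d\bar z_l)=2\sqrt{2}\,*(dz_j\wedge d\bar z_j\wedge d\bar z_k\wedge d\bar z_l)$ established in dimension $6$.

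The routine Clifford bookkeeping is identical to the three-fold case; the one place demanding care is the Hodge-star identification in the second formula, since $*$ on a complex four-fold acts on the relevant forms differently from the three-fold case, so the signs and normalisations must be re-tracked in the $8$-dimensional flat model. I expect this Hodge-star matching to be the only genuine computation, everything else being a transcription of lemma \ref{Clifford action formulae} with the summation range enlarged from two to three indices.
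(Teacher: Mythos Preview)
Your proposal is correct and follows essentially the same approach as the paper: local holomorphic coordinates, reduction to basis elements by linearity, and the observation that there are now three indices $k\neq j$ rather than two. The paper organises the computation slightly more compactly by factoring $c(d\bar z_j\wedge\omega)=c(d\bar z_j)\circ c\big(\sum_{k\neq j}dx_k\wedge dy_k\big)$ and evaluating the inner factor on $\varphi$ (resp.\ $\xi$) in one stroke as $-3i\varphi$ (resp.\ $3i\xi$), rather than handling each of the three summands $c(d\bar z_j\wedge dx_k\wedge dy_k)$ separately as you propose; but this is purely cosmetic and the content is the same.
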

\begin{proof}
    We take local holomorphic coordinates $\{z_k=x_k+iy_k\}_{k=1,2,3,4}$ centered at a point $x\in X$ so that the Kähler metric is standard to second order at the point. All the calculations done in this proof are at this point $x$ with respect to the chosen local coordinates. Let's take $j,k\in\{1,2,3,4\}.$ Using linearity of the Clifford action enough to prove the formulae when $\eta=d\bar z_j$ at $x$. Using formula \ref{Clifford one form} we compute:
    \begin{align*}
        c(d\bar z_j\wedge\omega)\varphi&=c\big(d\bar z_j\wedge(\sum_{k\neq j}dx_k\wedge dy_k)\big)\varphi\\
        &=c(d\bar z_j)\circ c\big(\sum_{k\neq j}dx_k\wedge dy_k\big)\varphi\\
        &=c(d\bar z_j)(-3i\varphi)\\
        &=-3\sqrt{2}id\bar z_j\wedge\varphi
    \end{align*}
    and 
    \begin{align*}
        c(dz_j\wedge\omega)\xi&=c\big(dz_j\wedge(\sum_{k\neq j}dx_k\wedge dy_k)\big)xi\\
        &=c(dz_j)\circ c\big(\sum_{k\neq j}dx_k\wedge dy_k\big)\xi\\
        &=c(dz_j)(3i\xi)\\
        &=-3\sqrt{2}i(d\bar z_j\angle\xi)\\
        &=-3\sqrt{2}i*(dz_j\wedge\xi)
    \end{align*}
Hence, the lemma follows.    
\end{proof}
We also describe the Clifford action of the forms $i\omega$ and $\omega^2$ on the positive spinors. The proof of the following lemma involves very similar computations as in the lemma \ref{Clifford omega} and is left to the reader to fill in the details.
\begin{lemma}\label{Clifford action lemma 2 8d}We have the positive spin bundle as the summand $S_+=\mathcal{L}\oplus(\Lambda^{0,2}\otimes\mathcal{L})\oplus(\Lambda^{0,4}\otimes\mathcal{L}).$ $ c(i\omega)$ and $c(\omega^2)$ preserve the splitting of $S_+$ and act on the summand as follows:
    \begin{align*}
        c(i\omega)=\begin{bmatrix}
            4\text{Id}&0&0\\
            0&0&0\\
            0&0&-4\text{Id}
        \end{bmatrix} \text{ and }
        c(\omega^2)=\begin{bmatrix}
            -12\text{Id}&0&0\\
            0&4\text{Id}&0\\
            0&0&12\text{Id}
        \end{bmatrix}  
    \end{align*}
\end{lemma}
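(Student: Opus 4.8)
The plan is to repeat, in complex dimension four, the coordinate computation carried out in the proof of Lemma \ref{Clifford omega}. I would first fix local holomorphic coordinates $\{z_k = x_k + i y_k\}_{k=1,2,3,4}$ centred at a point $x \in X$ in which the Kähler metric is standard to second order, so that $\omega = \sum_{k=1}^{4} dx_k \wedge dy_k$ at $x$ and every question becomes linear algebra in the Clifford module. Since tensoring with $\mathcal{L}$ does not affect the Clifford action, it is enough to compute on the untwisted summands $\Lambda^0$, $\Lambda^{0,2}$ and $\Lambda^{0,4}$.

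The elementary building block is $c(dx_j \wedge dy_j) = c(dx_j)\circ c(dy_j)$, which I would evaluate with formula \eqref{Clifford one form} exactly as in Lemma \ref{Clifford omega}. A short calculation gives the dichotomy that on a monomial $d\bar z_I = d\bar z_{i_1}\wedge\cdots\wedge d\bar z_{i_q}$ the operator $c(dx_j \wedge dy_j)$ is multiplication by $+i$ when $j \in I$ and by $-i$ when $j \notin I$. In particular each $c(dx_j \wedge dy_j)$ is diagonal on monomials of fixed antiholomorphic degree, so both $c(i\omega)$ and $c(\omega^2)$ will preserve the splitting $S_+ = \mathcal{L}\oplus(\Lambda^{0,2}\otimes\mathcal{L})\oplus(\Lambda^{0,4}\otimes\mathcal{L})$. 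Summing the dichotomy over $j = 1,\dots,4$ yields $c(\omega) = i(2q-4)\,\mathrm{Id}$ on $\Lambda^{0,q}$, hence $c(\omega) = -4i,\,0,\,4i$ on the three summands; multiplying by $i$ gives the asserted matrix for $c(i\omega)$.

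For $c(\omega^2)$ I would expand $\omega^2 = \sum_{j \neq k}(dx_j \wedge dy_j)\wedge(dx_k \wedge dy_k)$ at $x$. Each summand is a wedge of four \emph{distinct} coordinate one-forms, so its Clifford action factorises as the composition $c(dx_j \wedge dy_j)\circ c(dx_k \wedge dy_k)$, and the $\pm i$ rule gives the eigenvalue on $\Lambda^{0,q}$ as $\sum_{j\neq k}\epsilon_j\epsilon_k = \big(\sum_j \epsilon_j\big)^2 - \sum_j \epsilon_j^2 = -(2q-4)^2 + 4$. Equivalently, one can avoid the expansion via the Clifford identity $c(\omega^2) = c(\omega)^2 + 4\,\mathrm{Id}$ (the four-dimensional instance of Lemma $4.5$ in \cite{JP}, as used in Corollary \ref{Clifford omega^2}), since $c(\omega)^2 = \mathrm{diag}(-16,0,-16)$. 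Either route produces the diagonal scalars on $\Lambda^0, \Lambda^{0,2}, \Lambda^{0,4}$.

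I expect the whole argument to be routine apart from sign bookkeeping, which is concentrated in two spots: establishing the $+i$ versus $-i$ dichotomy for $c(dx_j\wedge dy_j)$, where one must respect the conjugate-linearity convention in the contraction $\angle$ of \eqref{Clifford one form}; and evaluating the eigenvalue $-(2q-4)^2 + 4$ on the top summand $\Lambda^{0,4}$ (the case $q = 4$, where every index lies in $I$), which is the entry most worth re-checking against the stated matrix.
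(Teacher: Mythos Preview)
Your approach is precisely what the paper intends: it explicitly defers the proof, saying the computation is ``very similar'' to Lemma~\ref{Clifford omega}, and your local-coordinate argument with the $\pm i$ dichotomy for $c(dx_j\wedge dy_j)$ on monomials $d\bar z_I$ is exactly that computation carried over to complex dimension four. The factorisation of $c(\omega^2)$ via $\sum_{j\neq k}\epsilon_j\epsilon_k=(\sum_j\epsilon_j)^2-\sum_j\epsilon_j^2$ is a clean way to organise it.

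One point worth flagging: your formula $-(2q-4)^2+4$ gives $-12$ on $\Lambda^{0,4}$, not the $+12$ printed in the lemma. Your value is the correct one. You can confirm this independently by noting that on $\Lambda^{0,4}$ every $c(dx_j\wedge dy_j)$ acts as $+i$, so each of the twelve ordered summands in $\omega^2=\sum_{j\neq k}(dx_j\wedge dy_j)\wedge(dx_k\wedge dy_k)$ contributes $i\cdot i=-1$. Moreover, the $-12$ is exactly what is needed for the identifications $q(\phi)=\frac{|\phi|^2}{32}(4i\omega-\omega^2)$ and $q(\phi)=-\frac{|\phi|^2}{32}(4i\omega+\omega^2)$ in the proof of Theorem~\ref{theorem 8d} to reproduce the endomorphisms $\mathrm{diag}(\tfrac{7}{8},-\tfrac{1}{8},-\tfrac{1}{8})|\phi|^2$ and $\mathrm{diag}(-\tfrac{1}{8},-\tfrac{1}{8},\tfrac{7}{8})|\phi|^2$. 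So your instinct to re-check that entry was well placed: the discrepancy is a typo in the statement, not an error in your argument.
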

\subsection{Solution on closed K\"ahler $4$-folds}\label{Solution 8d}
Before we explain the construction of a solution of the Seiberg--Witten equations \eqref{Dirac 8d},\eqref{Curvature 8d}, we will need two lemmas from K\"ahler geometry.
\begin{lemma}\label{lemma Kahler 8d 1st}For a smooth complex valued function $h$ on a Kähler $4$-fold $(X,\omega),$ we have 
\begin{equation*}
    *(\partial\bar\partial h\wedge\omega)=-\partial\bar\partial h\wedge\omega+\frac{i}{4}\Delta(h)\omega^2
\end{equation*}
\begin{proof}
    We take local holomorphic coordinates $\{z_k=x_k +i y_k\}_{k=1,2,3,4}$ centered at a point $x\in X$ so that the Kähler metric is standard to second order at the point. All calculations in the following are performed at the point $x$. We compute
    \begin{align*}
    &(\partial\bar\partial h)\wedge\omega\\
    &=\sum_{j,k=1}^4 \frac{\partial^2h}{\partial z_j\partial \bar{z}_k}d z_j\wedge d\bar{z}_k\wedge\omega\\
    &=\sum_{j\neq k}\frac{\partial^2h}{\partial z_j\partial \bar{z}_k}d z_j\wedge d\bar{z}_k\wedge\omega+\sum_{j=1}^4 \frac{\partial^2h}{\partial z_j\partial \bar{z}_j}d z_j\wedge d\bar{z}_j\wedge\omega
\end{align*}
A small calculation shows that for $j\neq k,$
\begin{align*}
    *(d z_j\wedge d\bar{z}_k\wedge\omega)=-(d z_j\wedge d\bar{z}_k\wedge\omega)
\end{align*}
and \begin{align*}
    *(dz_k\wedge dz_k\wedge\omega)=-(dz_k\wedge dz_k\wedge\omega)-i\omega^2
\end{align*}
Thereafter we get
\begin{align*}
    &*\big((\partial\bar\partial h)\wedge\omega\big)\\
    &=*\bigg(\sum_{j\neq k}\frac{\partial^2h}{\partial z_j\partial \bar{z}_k}d z_j\wedge d\bar{z}_k\wedge\omega+\sum_{j=1}^4 \frac{\partial^2h}{\partial z_j\partial \bar{z}_j}d z_j\wedge d\bar{z}_j\wedge\omega\bigg)\\
    &=-\sum_{j\neq k}\frac{\partial^2h}{\partial z_j\partial \bar{z}_k}d z_j\wedge d\bar{z}_k\wedge\omega-\sum_{j=1}^4 \frac{\partial^2h}{\partial z_j\partial \bar{z}_j}d z_j\wedge d\bar{z}_j\wedge\omega-i\big(\sum_{j=1}^4 \frac{\partial^2h}{\partial z_j\partial \bar{z}_j}\big)\omega^2\\
    &=-(\partial\bar\partial h)\wedge\omega-\frac{i}{4}\Big(\sum\limits_{k=1}^{4}\Big(\frac{\partial^2 h}{\partial x_k^2}+\frac{\partial^2 h}{\partial y_k^2}\Big)\Big)\omega^2\\
    &=-(\partial\bar\partial h)\wedge\omega+\frac{i}{4}\Delta(h)\omega^2\tag*{\qedhere} 
\end{align*}
\end{proof}
    
\end{lemma}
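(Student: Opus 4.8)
The plan is to mirror the proof of Lemma \ref{Lemma 1}, working pointwise in normal holomorphic coordinates. First I would choose local holomorphic coordinates $\{z_k = x_k + iy_k\}_{k=1,\dots,4}$ centered at a point $x$ in which the Kähler metric agrees with the flat metric to second order, so that at $x$ we have $\omega = \sum_{m=1}^{4} dx_m\wedge dy_m$, $dz_m\wedge d\bar z_m = -2i\,dx_m\wedge dy_m$, and the volume form is $\omega^4/4!$; every identity of forms then only has to be checked at $x$. Writing $\partial\bar\partial h = \sum_{j,k} \frac{\partial^2 h}{\partial z_j\partial\bar z_k}\,dz_j\wedge d\bar z_k$, I would split the sum into the off-diagonal ($j\neq k$) and diagonal ($j=k$) parts and apply $*$ term by term, exactly as in the $3$-fold case.

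The crux of the argument is the pair of local Hodge-star identities
\begin{align*}
*(dz_j\wedge d\bar z_k\wedge\omega) &= -(dz_j\wedge d\bar z_k\wedge\omega), \qquad (j\neq k),\\
*(dz_k\wedge d\bar z_k\wedge\omega) &= -(dz_k\wedge d\bar z_k\wedge\omega) - i\,\omega^2.
\end{align*}
These are the only nontrivial computations. I would verify them at $x$ by expanding $\omega$ in the real basis $dx_m\wedge dy_m$ and discarding the summands that die against $dz_j$ or $d\bar z_k$, so that only the indices $m\neq j,k$ survive in the off-diagonal case and $m\neq k$ in the diagonal case; evaluating $*$ on the resulting wedge of standard $2$-forms against the volume form then gives the claim. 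The \emph{diagonal} identity is the delicate one: the part of $dz_k\wedge d\bar z_k$ proportional to $dx_k\wedge dy_k$ interacts with the remaining factors of $\omega$ to produce the extra $-i\omega^2$, so the careful tracking of signs there is the main obstacle.

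Finally I would assemble the pieces. The off-diagonal terms together with the $-(dz_k\wedge d\bar z_k\wedge\omega)$ part of the diagonal terms recombine into $-(\partial\bar\partial h)\wedge\omega$, while the leftover $-i\omega^2$ contributions sum to $-i\big(\sum_k \frac{\partial^2 h}{\partial z_k\partial\bar z_k}\big)\omega^2$. Using $\frac{\partial^2}{\partial z_k\partial\bar z_k} = \tfrac14\big(\frac{\partial^2}{\partial x_k^2}+\frac{\partial^2}{\partial y_k^2}\big)$ and the convention $\Delta = -\sum_k\big(\frac{\partial^2}{\partial x_k^2}+\frac{\partial^2}{\partial y_k^2}\big)$ fixed in Lemma \ref{lemma 2}, this last term equals $\tfrac{i}{4}\Delta(h)\,\omega^2$, which yields the stated formula.

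As a cleaner coordinate-free alternative, I would instead decompose the $(1,1)$-form $\partial\bar\partial h = \alpha_0 + \tfrac14\langle\partial\bar\partial h,\omega\rangle\,\omega$ into its primitive part $\alpha_0$ plus its trace, which by Lemma \ref{lemma 2} equals $\tfrac{i}{2}\Delta(h)$. Weil's formula gives $*(\omega\wedge\alpha_0) = -\,\omega\wedge\alpha_0$ for a primitive $(1,1)$-form on a $4$-fold and $*(\omega^2) = \omega^2$; substituting $\omega\wedge\alpha_0 = \partial\bar\partial h\wedge\omega - \tfrac{i}{8}\Delta(h)\,\omega^2$ and collecting terms reproduces the identity directly, bypassing the pointwise bookkeeping.
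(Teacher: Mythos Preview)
Your primary approach is correct and essentially identical to the paper's proof: normal holomorphic coordinates, splitting $\partial\bar\partial h$ into diagonal and off-diagonal parts, the same two Hodge-star identities on $dz_j\wedge d\bar z_k\wedge\omega$, and the same conversion from $\sum_k\partial^2 h/\partial z_k\partial\bar z_k$ to $\Delta(h)$. Your coordinate-free alternative via the primitive decomposition and Weil's formula is a pleasant bonus not present in the paper, and it checks out.
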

\begin{lemma}\label{lemma Kahler 8d 2nd}
     For a smooth real-valued function $h$ on a Kähler $4$-fold $(X,\omega),$ we have
    \begin{align*}
        *\big(\partial\bar\partial h\wedge\omega^2\big)=-2\partial\bar\partial h+i\Delta (h)\omega
    \end{align*}
\end{lemma}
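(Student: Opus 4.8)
The plan is to follow exactly the pattern used in the proofs of Lemma \ref{Lemma 1} and Lemma \ref{lemma Kahler 8d 1st}: work at a fixed point $x\in X$ in local holomorphic coordinates $\{z_k=x_k+iy_k\}_{k=1,2,3,4}$ in which the Kähler metric is standard to second order, so that every Hodge-star computation reduces to the flat model on $\mathbb{C}^4$. Writing $\partial\bar\partial h=\sum_{j,k=1}^4 \frac{\partial^2 h}{\partial z_j\partial\bar z_k}\,dz_j\wedge d\bar z_k$ and $\omega=\sum_m dx_m\wedge dy_m$, I would split $\partial\bar\partial h\wedge\omega^2$ into the off-diagonal terms ($j\neq k$) and the diagonal terms ($j=k$), reducing the whole statement to two building-block identities for $6$-forms.

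The two computations I would carry out are $*(dz_j\wedge d\bar z_k\wedge\omega^2)$ for $j\neq k$ and $*(dz_k\wedge d\bar z_k\wedge\omega^2)$. Using $dz_k\wedge d\bar z_k=-2i\,dx_k\wedge dy_k$ and expanding $\omega^2$ against the standard volume form $\mathrm{dvol}=\omega^4/4!$, I expect to find
\[
*(dz_j\wedge d\bar z_k\wedge\omega^2)=-2\,dz_j\wedge d\bar z_k\qquad(j\neq k),
\]
\[
*(dz_k\wedge d\bar z_k\wedge\omega^2)=-2\,dz_k\wedge d\bar z_k-4i\,\omega .
\]
Summing these against the Hessian coefficients collapses the off-diagonal and diagonal ``traceless'' parts into $-2\,\partial\bar\partial h$, while the extra $-4i\omega$ from each diagonal term assembles into $-4i\big(\sum_k \tfrac{\partial^2 h}{\partial z_k\partial\bar z_k}\big)\omega$. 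Finally I would rewrite $\sum_k \frac{\partial^2 h}{\partial z_k\partial\bar z_k}=\tfrac14\sum_k(\partial_{x_k}^2+\partial_{y_k}^2)h=-\tfrac14\Delta h$, using the sign convention for $\Delta$ fixed in Lemma \ref{lemma 2}; this turns $-4i\cdot(-\tfrac14\Delta h)$ into $+i\Delta(h)$ and yields exactly $-2\partial\bar\partial h+i\Delta(h)\omega$.

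The only genuine obstacle is the $j=k$ building block: unlike $dz_j\wedge d\bar z_k$ with $j\neq k$, the form $dz_k\wedge d\bar z_k$ is not primitive, so its wedge with $\omega^2$ produces both a traceless piece and a multiple of $\omega$, and one must track the trace factor and the signs carefully. A clean way to organize this, and a good consistency check, is to Lefschetz-decompose $\partial\bar\partial h=\beta_0+\tfrac14\Lambda(\partial\bar\partial h)\,\omega$ into its primitive part $\beta_0$ and its trace part, note that $\Lambda(\partial\bar\partial h)=\langle\partial\bar\partial h,\omega\rangle=\tfrac{i}{2}\Delta h$ by Lemma \ref{lemma 2}, and then apply the Weil identity for the Hodge star on primitive forms, which gives $*(\beta_0\wedge\omega^2)=-2\beta_0$ and $*\omega^3=6\omega$; reassembling reproduces the stated formula and agrees with the coordinate computation. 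Either route is routine once the two $6$-form identities are in hand, and the arithmetic of the trace term is the step most prone to sign errors.
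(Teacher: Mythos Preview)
Your proposal is correct and follows essentially the same approach as the paper: local holomorphic coordinates in which the metric is standard to second order, the split into off-diagonal and diagonal terms, and precisely the two building-block identities $*(dz_j\wedge d\bar z_k\wedge\omega^2)=-2\,dz_j\wedge d\bar z_k$ for $j\neq k$ and $*(dz_k\wedge d\bar z_k\wedge\omega^2)=-2\,dz_k\wedge d\bar z_k-4i\,\omega$, followed by the same reassembly using the paper's sign convention for $\Delta$. The Lefschetz-decomposition cross-check you sketch is a nice addition not present in the paper, but the core argument is identical.
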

\begin{proof}
    We take local holomorphic coordinates $\{z_k=x_k +i y_k\}_{k=1,2,3,4}$ centered at a point $x\in X$ so that the Kähler metric is standard to second order at the point. All calculations in the following are performed at the point $x$. We compute
\begin{align*}
    *\big(\partial\bar\partial h\wedge\omega^2\big)&=*\Big(\sum\limits_{j,k=1}^{4} \frac{\partial^2h}{\partial z_j\partial\bar{z}_k}d z_j\wedge d\bar{z}_k\wedge\omega^2\Big)\\
    &=*\Big(\sum\limits_{j\neq k}^{} \frac{\partial^2h}{\partial z_j\partial\bar{z}_k}d z_j\wedge d\bar{z}_k\wedge\omega^2+\sum_{k=1}^4 \frac{\partial^2h}{\partial z_k\partial\bar{z}_k}d z_k\wedge d\bar{z}_k\wedge\omega^2\Big)
\end{align*}
A small calculation shows that for $j\neq k,$
\begin{align*}
    *(d z_j\wedge d\bar{z}_k\wedge\omega^2)=-2d z_j\wedge d\bar{z}_k
\end{align*}
and 
\begin{align*}
    *\big(d z_k\wedge d\bar{z}_k\wedge\omega^2\big)&=-2d z_k\wedge d\bar{z}_k-4i\omega
\end{align*}
Hence we get
\begin{align*}
    &*\Big(\sum\limits_{j\neq k}^{} \frac{\partial^2h}{\partial z_j\partial\bar{z}_k}d z_j\wedge d\bar{z}_k\wedge\omega^2+\sum_{k=1}^3 \frac{\partial^2h}{\partial z_k\partial\bar{z}_k}d z_k\wedge d\bar{z}_k\wedge\omega^2\Big)\\
    &=-2\sum\limits_{j\neq k} \frac{\partial^2h}{\partial z_j\partial\bar{z}_k}d z_j\wedge d\bar{z}_k-2\sum_{k=1}^4 \frac{\partial^2h}{\partial z_k\partial\bar{z}_k}d z_k\wedge d\bar{z}_k-4i\Big(\sum_{k=1}^4 \frac{\partial^2h}{\partial z_k\partial\bar{z}_k}\Big)\omega\\
    &=-2\Big(\sum\limits_{j,k=1}^{4} \frac{\partial^2h}{\partial z_j\partial\bar{z}_k}d z_j\wedge d\bar{z}_k\Big)-i\Big(\sum\limits_{k=1}^{4}\Big(\frac{\partial^2 h}{\partial x_k^2}+\frac{\partial^2 h}{\partial y_k^2}\Big)\Big)\omega \\
    &=-2\partial\bar\partial h+{i}\Delta (h)\omega\tag*{\qedhere} 
\end{align*}
\end{proof}
\begin{proof}[Proof of theorem \ref{theorem 8d}] Say $h$ be the hermitian metric on $\mathcal{L}.$ We call the Chern connection on $\mathcal{L}$ with respect to $h$ by $A_0.$ In the first part of the proof we assume that the first two conditions of theorem \ref{theorem 8d} are met, i.e., dim $H^0(X,\mathcal{L})>0$ and $c_1(K_X^{-1}\otimes\mathcal{L}^2)=a[\omega]$ for $a<0.$\par
We take a nontrivial holomorphic section $\varphi$ of $\mathcal{L}.$ Hence, we have $\bar\partial_{A_0}\varphi=0.$ It follows from the lemma \ref{Clifford action lemma 1 8d} that for any $f\in C^\infty(X,\mathbb{C}),\phi=e^{3i(1+i)f}\varphi,A=-A_{K_X}+2A_0$ and $\beta=(\bar\partial f+\partial \bar f)\wedge\omega$ solves the Dirac equation \eqref{Dirac 8d}:
\begin{align*}
    \big(D_A+(1+i)c(\beta)\big)\phi=\sqrt{2}\bar\partial_{A_0}\phi+c(\bar\partial f\wedge\omega)\phi=0
\end{align*}
Next we compute the terms $d\beta^+$ and $d^*\beta$ for our choice of $\beta=(\bar\partial f+\partial \bar f)\wedge\omega$.
\begin{align*}
    d\beta=(\partial\bar\partial f+\bar\partial\partial\bar f)\wedge\omega=2i\partial\bar\partial(\text{Im}f)\wedge\omega
\end{align*}
Hence using lemma \ref{lemma Kahler 8d 1st} we get
\begin{align*}
    d\beta^+&=\frac{1}{2}\big(d\beta+*(d\beta)\big)\\
    &=i\big(\partial\bar\partial(\text{Im}f)\wedge\omega-(\partial\bar\partial(\text{Im}f)\wedge\omega)+\frac{i}{4}\Delta(h)\omega^2\big)\\
    &=-\frac{1}{4}\Delta(h)\omega^2
\end{align*}
To compute $d^*\beta,$ we take local holomorphic coordinates $\{z_k=x_k+iy_k\}_{k=1,2,3,4}$ centered at a point $x\in X$ so that the Kähler metric is standard to second order at the point. In these coordinates notice that for any $k\in\{1,2,3,4\},$
\begin{align*}
    *(d\bar z_k\wedge\omega)=\frac{i}{2}(d\bar z_k\wedge\omega^2)\text{ and } *(dz_k\wedge\omega)=-\frac{i}{2}(dz_k\wedge\omega^2)
\end{align*}
Therefore,
\begin{align*}
    *\beta&=*\Big(\sum_{j=1}^4\big(\frac{\partial f}{\partial\bar z_j}d\bar z_j+\frac{\partial \bar f}{\partial z_j}dz_j\big)\wedge\omega\Big)\\
    &=\frac{i}{2}\sum_{j=1}^4\big(\frac{\partial f}{\partial\bar z_j}d\bar z_j-\frac{\partial \bar f}{\partial z_j}dz_j\big)\wedge\omega^2\\
    &=\frac{i}{2}(\bar\partial f-\partial\bar f)\wedge\omega^2
\end{align*}
Finally use lemma \ref{lemma Kahler 8d 2nd} to calculate
\begin{align*}
    d^*\beta&=-*d\Big(\frac{i}{2}\big(\bar\partial f-\partial\bar f\big)\wedge\omega^2\Big)\\
    &=-i*\big(\partial\bar\partial(\text{Re}f)\wedge\omega^2\big)\\
    &=2i\partial\bar\partial(\text{Re}f)+\Delta(\text{Re}f)\omega
\end{align*}
Next we shall identify the quadratic term $q(\phi)$ in terms of $i\omega$ and $\omega^2$. Since, $\phi\in\Omega^0(X,\mathcal{L}), c\big(q(\phi)\big)$ preserves the splitting of $S_+=\mathcal{L}\oplus(\Lambda^{0,2}\otimes\mathcal{L})\oplus (\Lambda^{0,4}\otimes\mathcal{L})$ and acts on the summand as 
\begin{align*}
    c\big(q(\phi)\big)=\begin{bmatrix}
        \frac{7}{8}|\phi|^2\text{Id}& 0&0\\
        0&-\frac{1}{8}|\phi|^2\text{Id}&0\\
        0&0&-\frac{1}{8}|\phi|^2\text{Id}
    \end{bmatrix}
\end{align*}
Using lemma \ref{Clifford action lemma 2 8d}, we identify the quadratic term to be:
\begin{align}
    q(\phi)&=\frac{|\phi|^2}{32}(4i\omega-\omega^2)\\
    &=\frac{|\varphi|^2}{32}e^{-6(\text{Re}f+\text{Im}f)}(4i\omega-\omega^2)
\end{align}
Using all these formulae, we write down the simplified version of the curvature equation \eqref{Curvature 8d}. We write down the $i\Omega^2$ part and the $\Omega^4_+$ part of the equation separately.
\begin{align}
    F_A-4\partial\bar\partial(\text{Re}f)+2i\Delta(\text{Re}f)\omega=\frac{i}{8}e^{-6(\text{Re}f+\text{Im}f)}|\varphi|^2\omega \label{Curvature 8d 1}\\
    -\frac{1}{2}\Delta(\text{Im}f)+a=-\frac{1}{32}e^{-6(\text{Re}f+\text{Im}f)}|\varphi|^2\label{Curvature 8d 2}
\end{align}
To solve these two equations, we make a conformal change to the metric $h$ on $\mathcal{L}$ by $e^{2\lambda}$ for a smooth function $\lambda:X\rightarrow\mathbb{R}.$ Call this new metric $h'=e^{2\lambda} h.$ Notice since the change in metric is conformal $\varphi$ is still a holomorphic section of $(\mathcal{L},h'),$ i.e., if $A_0'$ is the Chern connection on $\mathcal{L}$ with respect to $h',\bar\partial_{A_0'}\varphi=0.$ We call the new induced metric on $L(S)=K_X^{-1}\otimes\mathcal{L}^2$ by $A'$. With respect to $A'$ and $h'$ equations \eqref{Curvature 8d 1} and \eqref{Curvature 8d 2} read
\begin{align*}
    F_{A'}-4\partial\bar\partial(\text{Re}f)+2i\Delta(\text{Re}f)\omega=\frac{i}{8}e^{-6(\text{Re}f+\text{Im}f)}|\varphi|^2_{h'}\omega\\
    -\frac{1}{2}\Delta(\text{Im}f)+a=-\frac{1}{32}e^{-6(\text{Re}f+\text{Im}f)}|\varphi|_{h'}^2
\end{align*}
Since $F_{A'}=F_A-4\partial\bar\partial\lambda,$ these are same as
\begin{align}
    F_A-4\partial\bar\partial(\lambda+\text{Re}f)+2i\Delta(\text{Re}f)\omega=\frac{i}{8}e^{-6(\text{Re}f+\text{Im}f)+2\lambda}|\varphi|^2\omega\label{Curvature 8d 3}\\
    -\frac{1}{2}\Delta(\text{Im}f)+a=-\frac{1}{32}e^{-6(\text{Re}f+\text{Im}f)+2\lambda}|\varphi|^2
\end{align}
The condition $c_1(K_X^{-1}\otimes\mathcal{L}^2)=\frac{2a}{\pi}[\omega]$ gives us $F_A=4\partial\bar\partial f_0-4ai\omega$ for an $f_0\in C^\infty(X,\mathbb{R}).$
Hence equation \eqref{Curvature 8d 3} reads
\begin{align*}
    4\partial\bar\partial(f_0-\lambda-\text{Re}f)-4ai\omega+2i\Delta(\text{Re}f)\omega=\frac{i}{8}e^{-6(\text{Re}f+\text{Im}f)+2\lambda}|\varphi|^2\omega,
\end{align*}
which breaks down into the following two equations.
\begin{align}
    f_0=\lambda+\text{Re}f\label{Curvature 8d 4}\\
    \Delta(-\text{Re}f)+\frac{1}{16}e^{-6(\text{Re}f+\text{Im}f)+2\lambda}|\varphi|^2=-2a\label{Curvature 8d 5}
\end{align}
Replacing $\lambda$ from the equation \eqref{Curvature 8d 4} into the equations \eqref{Curvature 8d 3}, \eqref{Curvature 8d 5} we get
\begin{align}
    \Delta(-\text{Im}f)+\frac{1}{16}e^{(-8\text{Re}f-6\text{Im}f+2f_0)}|\varphi|^2=-2a\label{Curvature 8d similar 1}\\
    \Delta(-\text{Re}f)+\frac{1}{16}e^{(-8\text{Re}f-6\text{Im}f+2f_0)}|\varphi|^2=-2a\label{Curvature 8d similar 2}
\end{align}
These two equations lead us to 
\begin{align*}
    \Delta(\text{Re}f-\text{Im}f)=0.
\end{align*}
Which implies
\begin{align*}
    \text{Re}f=\text{Im}f+c_0
\end{align*}
for a real-valued constant $c_0.$ Moreover, the two equations \eqref{Curvature 8d similar 1}, \eqref{Curvature 8d similar 2} merge into a single equation:
\begin{align}
    \Delta(-\text{Re}f)+\frac{e^{(2f_0+6c_0)}|\varphi|^2}{16}e^{-14\text{Re}f}=-2a\label{Curvature 8d similar 3}
\end{align}
Since $|\varphi|^2$ has isolated zeroes and $-2a>0,$ equation \eqref{Curvature 8d similar 3} has a unique solution for Re$f$ \cite{KW}. Once we know Re$f,$ we solve $\lambda$ from equation \eqref{Curvature 8d 4}.\par
In the second part of the proof we show existence of a non-trivial solution of the Seiberg--Witten equations \eqref{Dirac 8d},\eqref{Curvature 8d} when the second pair of conditions in theorem \ref{theorem 8d} are satisfied, i.e., $\text{dim }H^0(X,K_X\otimes\mathcal{L}^{-1})>0 \text{ and }c_1(K_X^{-1}\otimes \mathcal{L}^2)=-\frac{2a}{\pi}[\omega]$ with $a<0.$\par
We take a non-trivial section $\xi\in\Omega^{0,4}(X,\mathcal{L})\cong \Omega^0(X,K_X^{-1}\otimes \mathcal{L})$ such that $\bar\partial_{A_0}^*\xi=0.$ This is equivalent to choosing a non-trivial holomorphic section $\bar\xi$ of $\overline{K_X^{-1}\otimes\mathcal{L}}\cong (K_X^{-1}\otimes\mathcal{L})^*,$ so we are indeed using the assumption that dim $H^0(X,K_X\otimes\mathcal{L}^{-1})> 0.$ For $f_1\in C^\infty(X,\mathbb{C}),$ define $\phi:=e^{-3i(1+i)\bar f_1}\xi$. A small calculation gives us 
\begin{align}
    \bar\partial_{A_0}^*\phi=\bar\partial_{A_0}^*(e^{-3i(1+i)\bar f_1}\xi)=3i(1+i)*(\partial\bar f_1\wedge\phi)\label{Calculation 1}
\end{align}
Thereafter, using lemma \ref{Clifford action lemma 1 8d} notice that $\phi=e^{-3i(1+i)\bar f_1}\xi, A=-A_{K_X}+2A_0,\beta=(\bar\partial f_1+\partial\bar f_1)\wedge\omega$ solves the Dirac equation \eqref{Dirac 8d}:
\begin{align*}
    \big(D_A+(1+i)c(\beta)\big)(\phi)=\sqrt{2}\bar\partial_{A_0}^*\phi+(1+i)c(\partial\bar f_1\wedge\omega)\phi=0
\end{align*}
Using lemma \ref{Clifford action lemma 2 8d} we identify the quadratic term $q(\phi)$ in the similar fashion as in the first part of the proof. We deduce that for $\phi\in\Omega^{0,4}(X,\mathcal{L}),$
\begin{align}
    q(\phi)&=-\frac{|\phi|^2}{32}(4i\omega+\omega^2)\\
    &=-\frac{e^{6(\text{Re}f_1-\text{Im}f_1)}|\xi|^2}{32}(4i\omega+\omega^2)
\end{align}
We write down the $i\Omega^2$ and the $\Omega^4_+$ parts of the curvature equation \eqref{Curvature 8d} separately.
\begin{align}
    F_A-4\partial\bar\partial(\text{Re}f_1)+2i\Delta(\text{Re}f_1)\omega=-\frac{i}{8}e^{6(\text{Re}f_1-\text{Im}f_1)}|\xi|^2\omega\label{Curvature 8d 2 1}\\
    -\frac{1}{2}\Delta(\text{Im}f_1)+a=-\frac{1}{32}e^{6(\text{Re}f_1-\text{Im}f_1)}|\xi|^2\label{Curvature 8d 2 2}
\end{align}
To solve these two equations, we perturb the initial metric $h$ on ${\mathcal{L}}$ by $e^{2\tilde\lambda},\tilde\lambda\in C^\infty(X,\mathbb{R}).$ The new metric being $\tilde{h}:=e^{2\tilde{\lambda}} h.$ Notice this conformal change in the metric doesn't change the holomorphic structure of $\mathcal{L}$ and induces a conformal change by $e^{-2\tilde\lambda}$ on the corresponding hermitian metric $\mathcal{L}^*\cong \mathcal{L}^{-1}.$ So, $\bar\xi$ remains a holomorphic section of $\overline{K_X^{-1}\otimes\mathcal{L}}\cong K_X\otimes\mathcal{L}^{-1},$ and now when we go back to the corresponding anti-holomorphic section on $K_X^{-1}\otimes\mathcal{L},$ the norm changes by $e^{-2\tilde\lambda}.$ 
Let's call the Chern connection on ${\mathcal{L}}$ with respect to the new metric ${\tilde{h}}$ by $\tilde{A_0}$ and the new induced metric on $L(S)=K_X^{-1}\otimes\mathcal{L}^2$ by $\tilde{A}=-A_{K_X}+2\tilde{A_0}.$ In this new set up the curvature equations \eqref{Curvature 8d 2 1} and \eqref{Curvature 8d 2 2} read
\begin{align*}
    F_{\tilde{A}}-4\partial\bar\partial(\text{Re}f_1)+2i\Delta(\text{Re}f_1)\omega=-\frac{i}{8}e^{6(\text{Re}f_1-\text{Im}f_1)}|\xi|_{\tilde{h}}^2\omega\\
    -\frac{1}{2}\Delta(\text{Im}f_1)+a=-\frac{1}{32}e^{6(\text{Re}f_1-\text{Im}f_1)}|\xi|_{\tilde{h}}^2
\end{align*}
Since $F_{\tilde{A}}=F_A-4\partial\bar\partial\tilde\lambda,$ these equations are same as
\begin{align}
    F_A-4\partial\bar\partial(\tilde\lambda+\text{Re}f_1)+2i\Delta(\text{Re}f_1)\omega=-\frac{i}{8}e^{6(\text{Re}f_1-\text{Im}f_1)-2\tilde\lambda}|\xi|^2\omega\label{Random 1}\\
    -\frac{1}{2}\Delta(\text{Im}f_1)+a=-\frac{1}{32}e^{6(\text{Re}f_1-\text{Im}f_1)-2\tilde\lambda}|\xi|^2\label{Random 2}
\end{align}
The condition $c_1(K_X^{-1}\otimes\mathcal{L}^2)=-\frac{2a}{\pi}[\omega]$ gives us $F_A=4\partial\bar\partial g_1+4ai\omega$ for an $g_1\in C^\infty(X,\mathbb{R}).$ Hence equation \eqref{Random 1} reads
\begin{align*}
    4\partial\bar\partial(g_1-\tilde\lambda-\text{Re}f_1)+4ai\omega+2i\Delta(\text{Re}f_1)\omega=-\frac{i}{8}e^{6(\text{Re}f_1-\text{Im}f_1)-2\tilde\lambda}|\xi|^2\omega
\end{align*}
which breaks down into the following two equations
\begin{align}
    g_1=\tilde\lambda+\text{Re}f_1\label{random 1}\\
    \Delta(\text{Re}f_1)+\frac{1}{16}e^{6(\text{Re}f_1-\text{Im}f_1)-2\tilde\lambda}|\xi|^2=-2a\label{random 2}
\end{align}
Replacing $\tilde\lambda$ from equation \eqref{random 1} into equations \eqref{random 2} and \eqref{Random 2} we get
\begin{align}
    \Delta(-\text{Im}f_1)+\frac{1}{16}e^{(8\text{Re}f_1-6\text{Im}f_1-2g_1)}|\xi|^2=-2a\label{eq 1}\\
    \Delta(\text{Re}f_1)+\frac{1}{16}e^{(8\text{Re}f_1-6\text{Im}f_1-2g_1)}|\xi|^2=-2a\label{eq 2}
\end{align}
These two equations lead us to
\begin{align}
    \Delta(\text{Re}f_1+\text{Im}f_1)=0
\end{align}
which implies
\begin{align*}
    \text{Re}f_1=-\text{Im}f_1-d_0
\end{align*}
for a real-valued constant $d_0.$ Moreover, the two equations \eqref{eq 1}, \eqref{eq 2} merge into a single equation:
\begin{align}
    \Delta(\text{Re}f_1)+\frac{e^{(6d_0-2g_1)}|\xi|^2}{16}e^{14\text{Re}f_1}=-2a\label{eq 3}
\end{align}
Since $|\xi|^2$ has isolated zeroes and $-2a>0,$ there exists of unique solution of Re$f_1$ from \eqref{eq 3} \cite{KW}. Once we know Re$f_1$, we get $\tilde\lambda$ from equation \eqref{random 1}.
\end{proof}
\subsection{Relationship with vortices}
Let's say for a holomorphic hermitian line bundle $\mathcal{L}$, the first two conditions in theorem \ref{theorem 8d} are satisfied. From the above proof, it follows that there exists a hermitian metric on $\mathcal{L}$ such that for any non-trivial holomorphic section $\varphi\in H^0(X,\mathcal{L}),$ there exists a unique $f\in C^\infty(X,\mathbb{R})$ and a unitary connection $A_0$ on $\mathcal{L}$ such that
    \begin{align}
        \big(\phi=e^{3i(1+i)f}\varphi,A=-A_{K_X}+2A_0,\beta=(\bar\partial f+\partial\bar f)\wedge\omega\big)\label{soln 1}
    \end{align}
    solves the Seiberg--Witten equations \eqref{Dirac 8d}, \eqref{Curvature 8d}.\par 
    For a non-zero complex-valued constant $c,$ define
    \begin{align*}
        f_c:=f+\frac{(1+i)}{6}\text{ln}|c|
    \end{align*}
    Instead of $\varphi,$ if we would have started with $c\varphi,$ the construction explained in the proof of theorem \ref{theorem 8d}, would give us the solution 
    \begin{align}
        \big(\phi=e^{3i(1+i)f_c}c\varphi,A=-A_{K_X}+2A_0,\beta=(\bar\partial f_c+\partial\bar f_c)\wedge\omega\big)\label{soln 2}
    \end{align}
    Notice 
    \begin{align*}
        e^{3i(1+i)f_c}c\varphi=e^{3i(1+i)\times \frac{(1+i)}{6}\text{ln}|c|}c \times(e^{3i(1+i)f}\varphi)=\frac{c}{|c|}e^{3i(1+i)f}\varphi\\
        \text{and } (\bar\partial f_c+\partial\bar f_c)\wedge\omega=(\bar\partial f+\partial\bar f)\wedge\omega
    \end{align*}
    Hence the solution \ref{soln 2} coming from $c\varphi$ is gauge equivalent to the solution \ref{soln 1} coming from $\varphi.$ Therefore, we can identify the moduli space of constructed solutions in theorem \ref{theorem 8d} with $\mathcal{M}_{\text{vortex}}(\mathcal{L})$. If $\mathcal{L}$ satisfies the second pair of conditions from theorem \ref{theorem 8d}, similar reasoning as above would imply that the vortices with respect to the line bundle $K_X\otimes\mathcal{L}^{-1}$ also lead to solutions of the Seiberg--Witten equations and the space of constructed solutions can be identified with $ \mathcal{M}_{\text{vortex}}(K_X\otimes\mathcal{L}^{-1}).$ If $b_1=0,$ notice $\mathcal{M}_{\text{vortex}}(\mathcal{L})\cong\mathbb{CP}\big(H^0(X,\mathcal{L})\big).$ Thereafter for $b_1=0$, if $\mathcal{L}$ is the trivial bundle satisfying the first two conditions or the canonical bundle satisfying the alternative two conditions, the moduli space of constructed solutions is a singleton set. 

\section{Solutions to the $5$-dimensional Seiberg--Witten equations}\label{5d}
This section explains the construction of a non-trivial solution to the  Seiberg--Witten equations \eqref{Dirac 5D perturbed},\eqref{Curvature 5D perturbed}. Throughout this section our base manifold is $\Sigma\times\mathbb{R}^3,$ with the product metric on it. 
\subsection{Spin$^\mathbb{C}$-bundles, Dirac operator and Clifford multiplication } 
Following \ref{Spinor in 5d}, we take the spinor bundle on $\Sigma\times\mathbb{R}^3$ as:
\begin{align}
    S(\Sigma\times\mathbb{R}^3)=\big(\pi_1^*(\Lambda^0(\Sigma,\mathcal{L}))\otimes\pi_2^*(\underline{\mathbb{C}}^2)\big)\oplus \big(\pi_1^*(\Lambda^{0,1}(\Sigma,\mathcal{L}))\otimes\pi_2^*(\underline{\mathbb{C}}^2)\big)
\end{align}
The associated line bundle is $L(S)=\pi_1^*(K_\Sigma^{-1}\otimes\mathcal{L}^2).$
$K_\Sigma$ being the canonical bundle of $\Sigma.$ The K\"ahler metric induces a metric on $K_\Sigma,$ let's call the Chern connection on $K_\Sigma$ with respect to this metric by $A_\Sigma.$ Now given a unitary connection on $A_{\mathcal{L}}$ on $\mathcal{L},$ we get a unitary connection on $K_\Sigma^{-1}\otimes\mathcal{L}^2$ and hence on $L(S)=\pi_1^*(K_\Sigma^{-1}\otimes\mathcal{L}^2)$ by pull back. With abuse of notation we call it $A=\pi_1^*(-A_\Sigma+2A_{\mathcal{L}}).$\par
Let's take $e_1,e_2$ to be the standard basis of $\mathbb{C}^2.$ If we take the standard flat metric and the flat connection on $\underline{\mathbb{C}}^2\rightarrow\mathbb{R}^3,$ then $e_1,e_2$ induce two unit-length nowhere vanishing parallel spinors on $\mathbb{R}^3.$ With abuse of notation we call them again by $e_1$ and $e_2.$ We follow the following convention for Clifford actions of the one-forms $dx_1,dx_2,dx_3$ on $e_1,e_2.$
\begin{align*}
    c(d x_1)=\begin{bmatrix}
        0&-i\\
        -i&0
    \end{bmatrix},
    c(d x_2)=\begin{bmatrix}
        0&-1\\
        -1&0
    \end{bmatrix},
    c(d x_3)=\begin{bmatrix}
        -i&0\\
        0&i
    \end{bmatrix}
\end{align*}
Thereafter the spinor bundle splits as the direct sum of four line bundles:
\begin{align}
    S(\Sigma\times\mathbb{R}^3)=&\big(\pi_1^*\Lambda^0(\Sigma,\mathcal{L})\otimes\pi_2^*\langle e_1\rangle\big)\oplus \big(\pi_1^*(\Lambda^0(\Sigma,\mathcal{L})\otimes\pi_2^*\langle e_2\rangle\big)\nonumber\\
    &\oplus\big(\pi_1^*\Lambda^{0,1}(\Sigma,\mathcal{L})\otimes\pi_2^*\langle e_1\rangle\big)\oplus \big(\pi_1^*\Lambda^{0,1}(\Sigma,\mathcal{L})\otimes\pi_2^*\langle e_2\rangle\big)\label{Splitting 5d}
\end{align}
We compute the Clifford action of the forms $i\pi_1^*\omega,i\pi_2^*(dx_1\wedge dx_2)$ and $\pi_1^*\omega\wedge \pi_2^*(dx_1\wedge dx_2).$ This will be used in the construction of the solution to the equations \eqref{Dirac 5D perturbed}, \eqref{Curvature 5D perturbed} to identify the quadratic term $q(\phi)$ in terms of the these forms. The Clifford action of the forms preserve the splitting \ref{Splitting 5d} of $S(\Sigma\times\mathbb{R}^3)$ and they act on the summands as follows:
\begin{align}
c(i\pi_1^*\omega)=\begin{bmatrix}
        \text{Id}&0&0&0\\
        0&\text{Id}&0&0\\
        0&0&-\text{Id}&0\\
        0&0&0&-\text{Id}
    \end{bmatrix},
    c(i\pi_2^*(dx_1\wedge dx_2))=\begin{bmatrix}
        \text{Id}&0&0&0\\
        0&-\text{Id}&0&0\\
        0&0&\text{Id}&0\\
        0&0&0&-\text{Id}
    \end{bmatrix}\label{Clifford formula 5d 1}\\
    c(\pi_1^*\omega\wedge \pi_2^*(dx_1\wedge dx_2))=\begin{bmatrix}
        -\text{Id}&0&0&0\\
        0&\text{Id}&0&0\\
        0&0&\text{Id}&0\\
        0&0&0&-\text{Id}\label{Clifford formula 5d 2}
    \end{bmatrix}
\end{align}

\subsection{Solution on $\Sigma\times\mathbb{R}^3$}\label{Soln 5d}
\begin{proof}[Proof of theorem \ref{Theorem 5d}] We split the proof into $4$ cases depending on the \textit{``perturbation"} term $\eta.$ We will see that for these $4$ different cases, the spinor $\phi$ will lie in $4$ different summands of $S(\Sigma\times\mathbb{R}^3).$ We call the Chern connection on $\mathcal{L}$ by $A_h$ for the prescribed hermitian metric $h$ on $\mathcal{L}.$\par 
\setul{1.2ex}{}
\ul{$\eta= a(id x_1\wedge d x_2-\omega\wedge d x_1\wedge d x_2),a=\frac{2\pi}{\text{vol}(\Sigma)}\text{deg}(K_\Sigma-2\mathcal{L})>0:$} 
Take $f_1\in C^\infty(\Sigma,\mathbb{C})$ and choose $\beta_3=\pi_1^*(\bar\partial f_1+\partial\bar f_1)\wedge \pi_2^*(dx_1\wedge dx_2)$ and $\beta_5=0.$\par
The assumption (from condition \ref{condition 1 5d}): dim  $H^0(\Sigma,\mathcal{L})>0$ lets us choose a non-trivial holomorphic section $\varphi$ of $\mathcal{L},$ i.e., $\bar\partial_{A_h}\varphi=0.$ Define a spinor $\phi:=\pi_1^*(e^{i(1-i)f_1}\varphi)\otimes\pi_2^*e_1.$ The Clifford action of $\beta_3$ on $\phi$ is given by the following formula.  
\begin{align*}
    c(\beta_3)\phi=-\sqrt{2}i\pi_1^*(\bar\partial f_1\wedge\varphi)\otimes \pi_2^*e_1
\end{align*}
The formula follows from the formula of the Clifford action of an one-form \ref{Clifford one form} and the Clifford action of $\pi_2^*(dx_1\wedge dx_2)$ \ref{Clifford formula 5d 1}. We also compute the Dirac operator acting on $\phi.$ 
\begin{align*}
    D_A\phi=\pi_1^*\big(\sqrt{2}\bar\partial_{A_h} (e^{i(1-i)f_1}\varphi)\big)\otimes \pi_2^*e_1=\pi_1^*\big(\sqrt{2}i(1-i)e^{i(1-i)f_1}(\bar\partial f_1\wedge\varphi)\big)\otimes \pi_2^*e_1
\end{align*}
Hence $\phi,A=\pi_1^*(-A_{K_\Sigma}+2A_h),\beta_3=\pi_1^*(\bar\partial f_1+\partial\bar f_1)\wedge \pi_2^*(dx_1\wedge dx_2)$ and $\beta_5=0$ solves the Dirac equation \eqref{Dirac 5D perturbed}. To solve the curvature equation we compute
\begin{align*}
    d\beta_3=\pi_1^*\big(-\Delta(\text{Im}f_1)\omega\big)\wedge\pi_2^*(d x_1\wedge d x_2)\\
    d^*\beta_3=\pi_1^*\big(\Delta(\text{Re}f_1)\big)\pi_2^*(d x_1\wedge d x_2)
\end{align*}
Using \ref{Clifford formula 5d 1},\ref{Clifford formula 5d 2} we identify the quadratic term $q(\phi)$ to be
\begin{align*}
    q(\phi)=\frac{|\phi|^2}{4}\big(i\pi_1^*\omega+i\pi_2^*(d x_1\wedge dx_2)-\pi_1^*\omega\wedge \pi_2^*(d x_1\wedge dx_2)\big)
\end{align*}
the curvature equation \eqref{Curvature 5D perturbed} reads
\begin{align*}
    F_A-2i \pi_1^*\big(\Delta(\text{Re}f_1)\big)\pi_2^*(d x_1\wedge d x_2)-2\pi_1^*\big(\Delta(\text{Im}f_1)\omega\big)\wedge\pi_2^*(d x_1\wedge d x_2)\\+a(id x_1\wedge d x_2-\omega\wedge d x_1\wedge d x_2)\\
    =\frac{|\phi|^2}{4}\big(i\pi_1^*\omega+i\pi_2^*(d x_1\wedge dx_2)-\pi_1^*\omega\wedge \pi_2^*(d x_1\wedge dx_2)\big)
\end{align*}
This splits into three parts:
\begin{align}
    2F_{A_h}-F_{K_\Sigma}=\frac{i}{4}e^{2(\text{Re}f_1-\text{Im}f_1)}|\varphi|^2\omega,\label{curvature 5d 11}
    \\
    -2\Delta(\text{Im}f_1)+\frac{1}{4}e^{2(\text{Re}f_1-\text{Im}f_1)}|\varphi|^2=a,\label{curvature 5d 12}\\
    2\Delta(\text{Re}f_1)+\frac{1}{4}e^{2(\text{Re}f_1-\text{Im}f_1)}|\varphi|^2=a\label{curvature 5d 13}
\end{align}
Equations \eqref{curvature 5d 12} and \eqref{curvature 5d 13} give us
\begin{align*}
    \Delta (\text{Re}f_1+\text{Im}f_1)=0.\text{ So, }\text{Re}f_1+\text{Im}f_1=a_1,\text{ for a constant $a_1$}
\end{align*}
Hence, equations \eqref{curvature 5d 12} and \eqref{curvature 5d 13} become one single equation:
\begin{align}
    2\Delta(\text{Re}f_1)+\frac{1}{4}e^{4\text{Re}f_1}e^{-2a_1}|\varphi|^2=a\label{curvature 5d 14}
\end{align}
We perturb the initial metric $h$ on $\mathcal{L}$ by $e^\lambda,$ $\lambda\in C^\infty(\Sigma,\mathbb{R}).$ The new metric being $h':=e^\lambda h.$ This conformal change in the metric doesn't change the holomorphic structure of $L.$ We write the equations \eqref{curvature 5d 11},\eqref{curvature 5d 14} with respect to the Chern connection $A_{h'}$ and the metric $h'.$
\begin{align}
    2F_{A_{h'}}-F_{K_\Sigma}=\frac{i}{4}e^{4\text{Re}f_1}e^{-2a_1}|\varphi|^2_{h'}\omega\label{curvature 5d 15}\\
    2\Delta(\text{Re}f_1)+\frac{1}{4}e^{4\text{Re}f_1}e^{-2a_1}|\varphi|^2_{h'}=a\label{curvature 5d 16}
\end{align}
Since $F_{A_{h'}}=F_{A_h}-\partial\bar\partial \lambda,$ we get
\begin{align*}
     2F_{A_{h}}-2\partial\bar\partial\lambda-F_{K_\Sigma}=\frac{i}{4}e^{(4\text{Re}f_1-2a_1+\lambda)}|\varphi|^2\omega
\end{align*}
Taking point-wise inner product with $\omega$ on both sides we have
\begin{align}\label{eq 100}
    \langle (2F_{A_{h}}-F_{K_\Sigma}),\omega\rangle-2\langle\partial\bar\partial\lambda,\omega\rangle=\frac{i}{4}e^{(4\text{Re}f_1-2a_1+\lambda)}|\varphi|^2
\end{align}
Use lemma \ref{lemma 2} to write $2\langle\partial\bar\partial\lambda,\omega\rangle=i\Delta\lambda$. Hence equation \eqref{eq 100} can be written as
\begin{align}
     \Delta\lambda+\frac{e^{-2a_1}|\varphi|^2}{4}e^{(4\text{Re}f_1+\lambda)}=i\langle F_{K_\Sigma},\omega\rangle-2i\langle F_{A_{h}},\omega\rangle\label{curvature 5d 17}
\end{align}
Multiplying both sides of equation \eqref{curvature 5d 16} by $2$ and adding it to equation \eqref{curvature 5d 17} we get
\begin{align}
    \Delta(4\text{Re}f+\lambda)+\frac{3e^{-2a_1}|\varphi|^2}{4}e^{(4\text{Re}f_1+\lambda)}=2a+i\langle F_{K_\Sigma},\omega\rangle-2i\langle F_{A_{h}},\omega\rangle\label{curvature 5d 18}
\end{align}
\begin{align*}
    a=\frac{2\pi}{\text{vol}(\Sigma)}\text{deg}(K_\Sigma-2\mathcal{L})>0\text{ implies } \int_\Sigma \big(2a+i\langle F_{K_\Sigma},\omega\rangle-2i\langle F_{A_h},\omega\rangle\big)\omega=0
\end{align*} 
Since $|\varphi|^2$ has isolated zeroes, $(4$Re$f+\lambda)$ has a unique solution from equation \eqref{curvature 5d 18} \cite{KW}. On the other hand, Multiplying both sides of equation \eqref{curvature 5d 17} by $-1$ and adding it to equation \eqref{curvature 5d 16} we get 
\begin{align}
    \Delta(2\text{Re}f-\lambda)=a-i\langle F_{K_\Sigma},\omega\rangle+2i\langle F_{A_h},\omega\rangle\label{eq 1000}
\end{align}
\begin{align*}
    a=\frac{2\pi}{\text{vol}(\Sigma)}\text{deg}(K_\Sigma-2\mathcal{L})\text{ implies } \int_\Sigma \big(a-i\langle F_{K_\Sigma},\omega\rangle+2i\langle F_{A_h},\omega\rangle\big)\omega=0
\end{align*}
Therefore, equation \eqref{eq 1000} has a unique solution for $(2$Re$f-\lambda)$. Since we have solved both the functions \((4\text{Re}f + \lambda)\) and \((2\text{Re}f - \lambda)\), we can determine \(\text{Re}f\) and \(\lambda\) individually. Finally, we perform a \(\mathbb{C}^*\) gauge transformation to obtain a solution with respect to the original metric \(h\).
\par
\ul{$\eta= a(id x_1\wedge d x_2-\omega\wedge d x_1\wedge d x_2),a=-\frac{2\pi}{\text{vol}(\Sigma)}$deg$(K_\Sigma-2\mathcal{L})<0:$} This case is almost identical to the previous one. We take a non-trivial holomorphic section $\varphi$ of $\mathcal{L}$ and an $f_2\in C^\infty(\Sigma,\mathbb{C}).$ Then for $\phi:=\pi_1^*(e^{i(1-i)f_2}\varphi)\otimes \pi_2^*e_2,$ $\beta_3=-(\bar\partial f_2+\partial\bar {f_2})\wedge d x_1\wedge dx_2$ and $\beta_5=0;$ $\phi,A=\pi_1^*(-A_{K_\Sigma}+2A_h),\beta_3,\beta_5$ solves the Dirac equation \eqref{Dirac 5D perturbed}.\par 
Using \ref{Clifford formula 5d 1} and \ref{Clifford formula 5d 2} we deduce
\begin{align*}
    q(\phi)=\frac{|\phi|^2}{4}\big(i\pi_1^*\omega-i\pi_2^*(dx_1\wedge dx_2)+\pi_1^*\omega\wedge \pi_2^*(dx_1\wedge dx_2) \big)
\end{align*}
Hence the curvature equation \eqref{Curvature 5D perturbed} breaks down into three parts:
\begin{align}
    2F_{A_h}-F_{K_\Sigma}=\frac{i}{4}e^{2(\text{Re}f_2-\text{Im}f_2)}|\varphi|^2\omega\label{eq 1004}\\
    -2\Delta(\text{Im}f_2)+\frac{1}{4}e^{2(\text{Re}f_2-\text{Im}f_2)}|\varphi|^2=-a\\
    2\Delta(\text{Re}f_2)+\frac{1}{4}e^{2(\text{Re}f_2-\text{Im}f_2)}|\varphi|^2=-a
\end{align}
The last two equations give us 
\begin{align*}
    \Delta(\text{Re}f_2+\text{Im}f_2)=0. \text{ So, } \text{Re}f_2+\text{Im}f_2=a_2,\text{  for a constant $a_2$}
\end{align*}
and hence become one single equation:
\begin{align}
    2\Delta(\text{Re}f_2)+\frac{1}{4}e^{4\text{Re}f_2}e^{-2a_2}|\varphi|^2=-a\label{eq 1005}
\end{align}
One can solve the two equations \eqref{eq 1004},\eqref{eq 1005} exactly in the same way as done in the previous case. The integrability criteria being $a=-\frac{2\pi}{\text{vol}(\Sigma)}$deg$(K_\Sigma-2\mathcal{L})<0.$\par
\ul{$\eta= a(id x_1\wedge d x_2+\omega\wedge d x_1\wedge d x_2),a=-\frac{2\pi}{\text{vol}(\Sigma)}$deg$(K_\Sigma-2\mathcal{L})>0:$} We start with a non-trivial section $\psi\in\Omega^{0,1}(X,\mathcal{L})$ such that $\bar\partial_{A_h}^*\psi=0$. In another words $\psi$ is an anti-holomorphic section of $K_\Sigma^{-1}\otimes \mathcal{L}=\mathcal{L}-K_\Sigma.$ Hence $\bar\psi$ is a holomorphic section of $K_\Sigma-\mathcal{L}.$ For an $f_3\in C^\infty(\Sigma,\mathbb{C})$, we define a spinor $\phi:=\pi_1^*(e^{-(1+i)\bar f_3}\psi)\otimes \pi_2^*e_1,\beta_3=-\pi_1^*(\bar\partial f_3+\partial\bar f_3)\wedge \pi_2^*(dx_1\wedge dx_2)$ and $\beta_5=0.$ Define $A:=\pi_1^*(-A_{K_\Sigma}+2A_h).$ We compute $D_A(\phi):$
\begin{align*}
    D_A\big(\pi_1^*(e^{-(1+i)\bar f_3}\psi)\otimes \pi_2^*e_1\big)&=\sqrt{2}\pi_1^*\big(\bar\partial_{A_h}^*(e^{-(1+i)\bar f_3}\psi)\big)\otimes \pi_2^*e_1\\
    &=-\sqrt{2}(1-i)\pi_1^*\big(e^{-(1+i)\bar f_3}*(\partial \bar f_3\wedge\psi)\big)\otimes\pi_2^* e_1
\end{align*}
The Clifford action of $\beta_3$ on $\phi$ is given by
\begin{align*}
    c\big(-\pi_1^*(\bar\partial f_3+\partial\bar f_3)\wedge \pi_2^*(dx_1\wedge dx_2)\big)\big(\pi_1^*(e^{-(1+i)\bar f_3}\psi)\otimes \pi_2^*e_1\big)\\
    =\sqrt{2}\pi_1^*\big(e^{-(1+i)\bar f_3}*(\partial\bar f_3\wedge \psi)\big)\otimes \pi_2^*e_1
\end{align*}
The formula follows from the formula of the Clifford action of an one-form \ref{Clifford one form} and the Clifford action of $\pi_2^*(dx_1\wedge dx_2)$ \ref{Clifford formula 5d 1}. Hence, $\phi=\pi_1^*(e^{-(1+i)\bar f_3}\psi)\otimes \pi_2^*e_1,A=\pi_1^*(-A_{K_\Sigma}+2A_h),\beta_3=-\pi_1^*(\bar\partial f_3+\partial\bar f_3)\wedge \pi_2^*(dx_1\wedge dx_2)$ and $\beta_5=0$ solves the Dirac equation \eqref{Dirac 5D perturbed}.\par
Since $\phi\in\Gamma\big(\pi_1^*\Lambda^{0,1}(\Sigma,\mathcal{L})\otimes\pi_2^*\langle e_1\rangle\big)$, using \ref{Clifford formula 5d 1},\ref{Clifford formula 5d 2} we observe:
\begin{align*}
    q(\phi)&=\frac{|\phi|^2}{4}\big(-i\pi_1^*\omega+i\pi_2^*(dx_1\wedge dx_2)+\pi_1^*\omega\wedge \pi_2^*(dx_1\wedge dx_2)\big)
\end{align*}
Thereafter, the curvature equation \eqref{Curvature 5D perturbed} breaks into three parts:
\begin{align}
    2F_{A_h}-F_{A_{K_\Sigma}}=-\frac{i}{4}e^{-2(\text{Re}f_3+\text{Im}f_3)}|\psi|^2\omega\label{eq 90}\\
    -2\Delta(\text{Im}f_3)+\frac{1}{4}e^{-2(\text{Re}f_3+\text{Im}f_3)}|\psi|^2=a\\
    -2\Delta(\text{Re}f_3)+\frac{1}{4}e^{-2(\text{Re}f_3+\text{Im}f_3)}|\psi|^2=a
\end{align}
The last two equations give us $\Delta\big(\text{Re}f_3-\text{Im}f_3\big)=0.$ Hence, $\text{Re}f_3=\text{Im}f_3- a_3,$ for a constant $a_3$. So, they become one single equation:
\begin{align}\label{eq 101}
    -2\Delta(\text{Re}f_3)+\frac{1}{4}e^{-4(\text{Re}f_3)}e^{-2a_3}|\psi|^2=a
\end{align}
We perturb the initial metric $h$ on $\mathcal{L}$ by $e^{\tilde\lambda},$ $\tilde\lambda\in C^\infty(\Sigma,\mathbb{R}).$ The new metric being $\tilde h:=e^{\tilde\lambda} h.$ Notice this conformal change in the metric doesn't change the holomorphic structure of $\mathcal{L}.$ This conformal change in metric of $\mathcal{L},$ induces a conformal change by $e^{-\tilde\lambda}$ on the corresponding hermitian metric on $\mathcal{L}^*\cong \mathcal{L}^{-1}.$ So, $\bar\psi$ remains a holomorphic section of $K_\Sigma-\mathcal{L},$ and now when we go back to the corresponding anti-holomorphic section on $\mathcal{L}-K_\Sigma,$ the norm changes by $e^{-\tilde\lambda}.$ We write equations \eqref{eq 90},\eqref{eq 101} with respect to the Chern connection $A_{\tilde h}$ and the metric $\tilde h$ on $\mathcal{L}:$ 
\begin{align}
    2F_{A_{\tilde h}}-F_{K_\Sigma}=-\frac{i}{4}e^{-4\text{Re}f_3}e^{-2a_3}e^{-\tilde\lambda}|\psi|^2\omega\label{curvature 5d case 3}\\
    -2\Delta(\text{Re}f_3)+\frac{1}{4}e^{-4(\text{Re}f_3)}e^{-2a_3}e^{-\tilde\lambda}|\psi|^2=a\label{curvature 5d case 31}
\end{align}
Since $F_{A_{\tilde{h}}}=F_A-\partial\bar\partial\tilde{\lambda},$ equation \eqref{curvature 5d case 3}
reads
\begin{align*}
    2F_A-2\partial\bar\partial \tilde\lambda-F_{K_\Sigma}=-\frac{i}{4}e^{-4\text{Re}f_3}e^{-2a_3}e^{-2\tilde\lambda}|\psi|^2\omega
\end{align*}
Finally taking point wise inner product with the volume form $\omega$ and rearranging some terms we get
\begin{align}
\Delta (-\tilde\lambda)+\frac{1}{4}e^{-4\text{Re}f_3}e^{-2c_3}e^{-\tilde\lambda}|\psi|^2=i\langle (2F_{A_{h}}-F_{K_\Sigma}),\omega\rangle\label{eq 110}
\end{align}
Multiplying both sides of equation \eqref{curvature 5d case 31} by $2$ and adding it to equation \eqref{eq 110} we get
\begin{align}
\Delta(-4\text{Re}f_3-\tilde{\lambda})+\frac{3e^{-2a_3}|\psi|^2}{4}e^{(-4\text{Re}f_3-\tilde{\lambda})}=2a+i\langle (2F_{A_{h}}-F_{K_\Sigma}),\omega\rangle\label{eq 79}
\end{align}
Notice that
\begin{align*}
    a=-\frac{2\pi}{\text{vol}(\Sigma)}\text{deg}(K_\Sigma-2\mathcal{L})>0\text{ implies }\int_\Sigma \big(2a+i\langle F_{K_\Sigma},\omega\rangle-2i\langle F_{A_h},\omega\rangle\big)\omega>0.
\end{align*}
Since $|\psi|^2$ has isolated zeroes, we have a unique solution for $(-4\text{Re}f_3-\tilde{\lambda})$ from equation \eqref{eq 79} \cite{KW}. 
On the other hand, Multiplying both sides of equation \eqref{curvature 5d case 31} by $-1$ and adding it to equation \eqref{eq 110} we get
\begin{align}
    \Delta(2\text{Re}f_3-\tilde\lambda)=-a+i\langle (2F_{A_{h}}-F_{K_\Sigma}),\omega\rangle\label{eq 80}
\end{align}
\begin{align*}
    a=-\frac{2\pi}{\text{vol}(\Sigma)}\text{deg}(K_\Sigma-2\mathcal{L})\text{ implies }\int_\Sigma \big(-a+i\langle F_{K_\Sigma},\omega\rangle-2i\langle F_{A_h},\omega\rangle\big)\omega=0.
\end{align*}
Hence we have a unique solution for $(2\text{Re}f_3-\tilde{\lambda})$ from equation \eqref{eq 80}. Since we have solved both the functions \((-4\text{Re}f_3 -\tilde\lambda)\) and \((2\text{Re}f_3 - \tilde\lambda)\), we can determine \(\text{Re}f_3\) and \(\tilde\lambda\) individually. Finally, we perform a \(\mathbb{C}^*\) gauge transformation to obtain a solution with respect to the original metric \(h\).\par
\ul{$\eta= a(id x_1\wedge d x_2+\omega\wedge d x_1\wedge d x_2),a=\frac{2\pi}{\text{vol}(\Sigma)}\text{deg}(K_\Sigma-2\mathcal{L})<0:$} For a function $g_2\in C^\infty(\Sigma,\mathbb{C}),$ define $\phi:=\pi_1^*\big(e^{-(1+i)\bar g_2}\psi\big)\otimes \pi_1^*e_2,\beta_3:=\pi_1^*(\bar\partial g_2+\partial\bar {g_2})\wedge \pi_2^*(dx_1\wedge dx_2)$ and $\beta_5:=0.$ Observe that $\phi,A=\pi_1^*(-A_{K_\Sigma}+2A_h),\beta_3,\beta_5$ solves the Dirac equation \eqref{Dirac 5D perturbed}.\par
Using \ref{Clifford formula 5d 1},\ref{Clifford formula 5d 2} we identify the quadratic term $q(\phi):$
\begin{align*}
    q(\phi)=-\frac{1}{4}|\phi|^2(i\omega+id x_1\wedge d x_2+\omega\wedge d x_1\wedge dx_2)
\end{align*}
Thereafter the curvature equation \eqref{Curvature 5D perturbed} breaks down into three parts:
\begin{align}
    2F_{A_h}-F_{K_\Sigma}=-\frac{i}{4}e^{-2(\text{Re}g_2+\text{Im}g_2)}|\psi|^2\omega\label{eq 81}\\
    -2\Delta(\text{Im}g_2)+\frac{1}{4}e^{-2(\text{Re}g_2+\text{Im}g_2)}|\psi|^2=-a\\
    -2\Delta(\text{Re}g_2)+\frac{1}{4}e^{-2(\text{Re}g_2+\text{Im}g_2)}|\psi|^2=-a
\end{align}
From the last two equations we get $\Delta(\text{Re}g_2-\text{Im}g_2)=0$. Hence,  $\text{Re}g_2=\text{Im}g_2+a_4,$\text{ for a constant $a_4$}. Thereafter they turn into one single equation:
\begin{align}
    -2\Delta(\text{Re}g_2)+\frac{1}{4}e^{-4\text{Re}g_2}e^{-2a_4}|\psi|^2=-a\label{eq 82}
\end{align}
One can solve the two equations \eqref{eq 81},\eqref{eq 82} exactly in the same way as done in the previous case. The integrability criteria being $a=\frac{2\pi}{\text{vol}(\Sigma)}\text{deg}(K_\Sigma-2\mathcal{L})<0$.
\end{proof}
\subsection{Relationship with vortices}
We start with the first case, i.e., $\eta= a(id x_1\wedge d x_2-\omega\wedge d x_1\wedge d x_2),a=\frac{2\pi}{\text{vol}(\Sigma)}\text{deg}(K_\Sigma-2\mathcal{L})>0.$ From the proof above notice that there exists a hermitian metric on $\mathcal{L}$ such that for a non-trivial holomorphic section $\varphi\in H^0(\Sigma,\mathcal{L})$, there exists an $f_1\in C^\infty(\Sigma,\mathbb{C})$ and a unitary connection $A_0$ on $\mathcal{L},
\big(\phi=\pi_1^*(e^{i(1-i)f_1}\varphi\otimes\pi_2^* e_1,A=\pi_1^*(-A_\Sigma+2A_0),\beta_3=\pi_1^*(\bar\partial f_1+\partial\bar f_1)\wedge \pi_2^*(dx_1\wedge dx_2),\beta_5=0\big)$ solves the Seiberg--Witten equations \eqref{Dirac 5D perturbed},\eqref{Curvature 5D perturbed}.\par
For $c\in\mathbb{C}^*,$ define 
\begin{align*}
    f_c=f_1+\frac{(i-1)}{2}\text{ln}|c|
\end{align*}
Following the proof we observe that instead of $\varphi,$ if we started with $c\varphi\in H^0(\Sigma,\mathcal{L}),$ we would get another solution of the Seiberg--Witten equation, namely, $\big(\phi=\pi_1^*(e^{i(1-i)f_c}c\varphi\otimes\pi_2^* e_1,A=\pi_1^*(-A_\Sigma+2A_0),\beta_3=\pi_1^*(\bar\partial f_c+\partial\bar f_c)\wedge \pi_2^*(dx_1\wedge dx_2),\beta_5=0\big).$ But we have 
\begin{align*}
    e^{i(1-i)f_c}c\varphi=\frac{c}{|c|}e^{i(1-i)f_1}\varphi\\
    (\bar\partial f_c+\partial\bar f_c)=(\bar\partial f_1+\partial\bar f_1)
\end{align*}
Hence the new solution is gauge-equivalent to the one before. Thereafter the construction of the solution in the proof of theorem \ref{Theorem 5d} relies solely on the holomorphic structure of $\mathcal{L}$ and the conformal class of a holomorphic section of $\mathcal{L}.$ Therefore, we can identify the moduli space of constructed solutions with $\mathcal{M}_{\text{vortex}}\big(c_1(\mathcal{L})\big).$\par
Similar observations can be made for the solutions in the other cases. For $\eta= a(id x_1\wedge d x_2-\omega\wedge d x_1\wedge d x_2),a=-\frac{2\pi}{\text{vol}(\Sigma)}$deg$(K_\Sigma-2\mathcal{L})<0,$ the moduli space of constructed solutions can be identified with $\mathcal{M}_{\text{vortex}}(\mathcal{L}).$ For the rest of the two cases where deg$(K_\Sigma-2\mathcal{L})<0$, the moduli space of constructed solutions can be identified with $\mathcal{M}_{\text{vortex}}(K_\Sigma-\mathcal{L}).$
\printbibliography[
heading=bibintoc,
title={Bibliography}
]
\Addresses
\end{document}